\newtheorem{theorem}{Theorem}
\numberwithin{theorem}{section}
\newtheorem{lemma}[theorem]{Lemma}
\newtheorem*{claim*}{Claim}
\newtheorem{proposition}[theorem]{Proposition}
\newtheorem{corollary}[theorem]{Corollary}
\newtheorem*{question*}{Question}
\theoremstyle{remark}
\newtheorem{remark}[theorem]{Remark}
\newtheorem*{remark*}{Remark}
\theoremstyle{definition}
\newtheorem{definition}[theorem]{Definition}
\newtheorem*{warning*}{Warning}
\newtheorem*{convention*}{Convention}
\newtheorem*{example*}{Example}
\newenvironment{reptheorem}[1]
  {\rthm}
  {\endrthm}
\newcommand{\R}{\mathbb{R}}
\newcommand{\B}{\mathbb{B}}
\newcommand{\Z}{\mathbb{Z}}
\newcommand{\Sa}{\mathbb{S}}
\newcommand{\Hy}{\mathbb{H}}
\newcommand{\D}{\mathbb{D}}
\newcommand{\C}{\mathcal{C}}
\newcommand{\calM}{\mathcal{M}}
\newcommand{\calK}{\mathcal{K}}
\newcommand{\calL}{\mathcal{L}}
\newcommand{\id}{\text{id}}
\newcommand{\overl}{\overline}
\newcommand{\interior}{\mathrm{int}}
\newcommand{\wt}{\widetilde}
\newcommand{\dinf}{\partial_\infty}
\newcommand{\diam}{\textrm{diam}}
\DeclareMathOperator{\Isom}{Isom}
\DeclareMathOperator{\Stab}{Stab}
\DeclareMathOperator{\Homeo}{Homeo}
\title[Generalized Cannon's conjecture]{On a generalization of Cannon's conjecture for cubulated hyperbolic groups}
\author{Corey Bregman}
\address{Department of Mathematics, Tufts University,  Medford, MA 02155 USA}
\email{corey.bregman@tufts.edu}
\urladdr{https://sites.google.com/view/cbregman} 
\author{Merlin Incerti-Medici}
\address{Universit\"at Wien, Fakult\"at f\"ur Mathematik, 1090 Wien, Austria}
\email{merlin.medici@gmail.com}
\urladdr{https://www.merlinmedici.ch/}
 \subjclass[2020]{
        57N16, 
        20F67 
        (primary),
        20F65, 
        57N45, 
        57N35 
        (secondary)}
\begin{document}

\begin{abstract}
We show that cubulated hyperbolic groups with spherical boundary of dimension 3 or at least 5 are virtually fundamental groups of closed, orientable, aspherical manifolds, provided that there are sufficiently many quasi-convex, codimension-1 subgroups whose limit sets are locally flat subspheres. The proof is based on ideas used by Markovic in his work on Cannon's conjecture for cubulated hyperbolic groups with 2-sphere boundary. 
\end{abstract}

\maketitle

\setcounter{tocdepth}{1}
\tableofcontents

\section{Introduction}
The Cannon conjecture asserts that if $G$ is a (Gromov) hyperbolic group whose boundary $\dinf G$ is homeomorphic to $S^2$, then $G$ is commensurable up to finite quotients with a cocompact lattice $\Gamma$ in $ \Isom(\Hy^3)$, the isometry group of real hyperbolic 3-space.  More precisely,  if the Cannon conjecture is true, then there exists a cocompact lattice $\Gamma\leq \Isom(\Hy^3)$ and a short exact sequence \[1\to F\to G\to \Gamma\to 1\]
where $F$ is a finite group equal to the kernel of the action of $G$ on $\dinf G$.

Markovic \cite{Markovic13} showed that the Cannon conjecture holds under a further assumption, namely that $G$ has enough quasi-convex surface subgroups to separate pairs of points in $\dinf G$.  This assumption implies in particular that $G$ is cubulated, \emph{i.e.} $G$ acts geometrically on a CAT(0) cube complex. Under this action the hyperplane stabilizers are surface subgroups. Later Ha\"issinsky \cite{Haissinsky} strengthened this result by showing that if $G$ is cubulated, then $G$ has enough quasi-convex surface subgroups to separate pairs of points at infinity.  Conversely, since every cocompact lattice $\Gamma\leq \Isom(\Hy^3)$ is cubulated by results of Kahn--Markovic \cite{KahnMarkovic12} and  Bergeron--Wise \cite{BergeronWise12}, it follows that the Cannon conjecture is equivalent to the statement that every hyperbolic group $G$ with $\dinf G\cong S^2$ is cubulated. The goal of the present work is to prove the following extension of Markovic's result:

\begin{theorem}\label{thm:S3-cubulated}
    Let $G$ be a hyperbolic group such that $\dinf G\cong S^3$ and suppose $G$ contains quasi-convex, codimension-1 subgroups $\{H_1,\ldots, H_k\}$ satisfying
    \begin{enumerate}
    \item The $G$-translates of the limit sets $\dinf H_i$ separate pairs of points in $\dinf G$.
    \item $\dinf H_i$ is a locally flat 2-sphere in  $S^3\cong\dinf G $ for $i=1,\ldots, k$.  
    \end{enumerate}
Then there exists a finite-index, torsion-free subgroup $\hat{G}\leq G$ such that $\hat{G}\cong \pi_1(M)$, where $M$ is a closed, orientable $4$-manifold covered by $\R^4$.
\end{theorem}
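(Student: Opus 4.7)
The plan is to follow Markovic's approach to Cannon's conjecture, adapted to one higher dimension by replacing locally flat circles in $S^2$ with locally flat $2$-spheres in $S^3$. In broad strokes I would cubulate $G$ using the given codimension-1 subgroups, pass to a torsion-free special subgroup, show that the resulting CAT(0) cube complex is a topological $4$-manifold, and then identify its universal cover with $\R^4$.

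The cubulation step should be relatively standard. The $G$-translates of the limit sets $\dinf H_i$ form a collection of walls on $\dinf G$ that separates pairs of points by hypothesis (1). Applying Sageev's construction, together with quasi-convexity of each $H_i$ and results of Bergeron--Wise and Hruska--Wise, produces a CAT(0) cube complex $X$ on which $G$ acts geometrically, with hyperplane stabilizers conjugate to the $H_i$. Since cubulated hyperbolic groups are virtually compact special by Agol, there is a finite-index torsion-free subgroup $\hat{G} \leq G$ acting freely and cocompactly on $X$. Because $G$ is hyperbolic, $\dinf X \cong \dinf G \cong S^3$, and each hyperplane $W$ of $X$ has limit set $\dinf W$ a locally flat $2$-sphere in this $S^3$.

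The central technical step is to verify that $X$ is a topological $4$-manifold. By Bestvina--Mess one has $\mathrm{vcd}(G) = 4$, so $\dim X \geq 4$. One must then show that $\dim X = 4$ and that $\lk(v) \cong S^3$ for every vertex $v \in X$. Following Markovic, the idea is to read $\lk(v)$ off the asymptotic intersection pattern of the hyperplane limit $2$-spheres incident to $v$: each such sphere bounds a closed $3$-ball on either side by Brown's generalized Schoenflies theorem, and pairwise crossings in $X$ should translate into transverse-like configurations of these balls inside $S^3$. An inductive argument on the number of crossing hyperplanes at $v$ should match $\lk(v)$ with a standard arrangement of coordinate subspheres, yielding $\lk(v) \cong S^3$. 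Once $X$ is a $4$-manifold, $M := X / \hat{G}$ is a closed aspherical $4$-manifold with $\pi_1(M) \cong \hat{G}$, orientable after possibly passing to a further index-$2$ subgroup. Finally, since $X$ is contractible, CAT(0), and has $\dinf X \cong S^3$, it is simply connected at infinity and hence homeomorphic to $\R^4$ by Freedman's topological recognition theorem in dimension $4$.

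The main obstacle is the local manifold analysis in the third paragraph. Although individual locally flat $2$-spheres in $S^3$ are tame by Brown--Mazur, controlling the global combinatorial topology of a family of pairwise intersecting such spheres is genuinely delicate. This is precisely where the weight of Markovic's original argument lies in the $S^2$ case, and the $S^3$ analogue inherits all of that difficulty together with a new ingredient: limit sets of crossing hyperplanes now meet in locally flat circles inside $2$-spheres rather than in finite point sets inside circles, so the inductive bookkeeping is one dimension more intricate. I expect the bulk of the work will consist of a careful extension of Markovic's intersection-pattern analysis to codimension-1 limit sets in $S^3$, plus genuine $4$-dimensional topological input to upgrade the combinatorial link structure to a topological manifold structure.
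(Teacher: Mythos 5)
Your proposal matches the paper only in its first step (cubulation via Sageev/Bergeron--Wise and passing to a torsion-free special subgroup). After that it diverges fundamentally, and the divergence is where the gap lies. You propose to show that the CAT(0) cube complex $X$ coming from Sageev's construction is itself a topological $4$-manifold, with $\lk(v)\cong S^3$ at every vertex, and then take $M=X/\hat G$ and identify $X$ with $\R^4$ via Freedman. This is not viable: the cube complex produced by Sageev's construction over a wallspace with many crossing walls is generically of dimension strictly greater than $\mathrm{vcd}(G)$, and its vertex links are flag simplicial complexes that are nowhere near spheres. Neither Markovic nor the present paper ever attempts to recognize $X$ as a manifold, and there is no reason to expect it to be one. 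There is no route from the cube complex to the desired $4$-manifold along the lines you sketch, and the ``inductive intersection-pattern analysis of crossing $2$-spheres'' you propose does not appear in Markovic's argument at all.

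The actual strategy is to extend the boundary action of $G$ on $S^{n-1}\cong\dinf G=\partial D^n$ to a free convergence action on the entire closed ball $D^n$; the quotient of $\interior(D^n)\cong\R^n$ is then automatically the desired manifold (Proposition~\ref{prop:quotient.of.maximal.G-action}). One first replaces $\mathcal H$ by a malnormal family (Theorem~\ref{thm:Markovic-Malnormal}), so the conjugate limit sets $g\Lambda_i$ of a fixed $H_i$ are pairwise disjoint and form a dense null family of locally flat $(n-2)$-spheres. The Sierpinski uniformization result (Proposition~\ref{prop:Uniformize-Sierpinski}) rounds these spheres off simultaneously, and each round sphere bounds a geodesic hemisphere $B_H$ in the Poincar\'e ball model; the union $K_i=S^{n-1}\cup\bigcup_H B_H$ is a generalized cell decomposition carrying a free convergence $G$-action (Lemmas~\ref{lem:QC-Gives-Cell-Structure} and~\ref{lem:QC-Gives-Free-Convergence}). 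These $G$-complexes are then \emph{refined} one by another; once all of $\mathcal H$ has been incorporated, the cell stabilizers become trivial precisely because $\mathcal H$ separates points at infinity (Proposition~\ref{prop:Markovic}(3)), and a radial extension (Proposition~\ref{prop:radial.extension}) produces the action on all of $D^n$. Crossings of limit spheres from different $H_i$ are thus handled algebraically via intersections of stabilizers across refinements, never by analyzing a geometric intersection pattern. You also omit a crucial ingredient: the Bartels--L\"uck solution of the Borel conjecture, invoked in Lemma~\ref{lem:Left-to-Right-Homeo} to upgrade the homotopy equivalence between the two sides of $\Lambda_H$ to a homeomorphism (this is where the constraint $n\neq5$ enters in the general theorem; for $n=4$ it reduces to the dimension-$3$ Borel conjecture, which is a consequence of geometrization).
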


We recall that an embedding $e : M \hookrightarrow N$ of a $k$-manifold into an $n$-manifold is {\it locally flat} if for every $x \in M$, there exists a neighbourhood $U$ of $e(x)$ in $N$ such that $(U, U \cap e(M))$ is homeomorphic to $(\mathbb{R}^n, \mathbb{R}^k)$. As in Markovic's case, hypothesis (1) implies that $G$ is cubulated with respect to a subset of $\{H_1,\ldots, H_k\}$. In particular, each $H_i$ is itself cubulated, hence by Markovic's result, each $H_i$ is virtually a lattice in $\Isom(\Hy^3)$. Since cubulated hyperbolic groups are residually finite, we may pass to a finite index subgroup of $G$ that is torsion-free and cubulated with respect to orientable, hyperbolic 3-manifold groups. Hypothesis (2) does not have a parallel in Markovic's setting because the Jordan curve theorem implies that the boundary of each surface subgroup is a locally flat circle in $S^2$. However, Apanasov--Tetenov \cite{ApanasovTetenov} have constructed examples of discrete subgroups in $\Isom(\Hy^4)$ that are isomorphic to closed hyperbolic 3-manifold groups but whose limit sets in $S^3$ are wild 2-spheres. As far as the authors are aware, such subgroups do not arise as quasi-convex subgroups of uniform lattices in $\Isom(\Hy^4)$, but the necessity of this assumption remains unclear.

The direct analog of Markovic's result is false when $\dinf G\cong S^3$. Indeed,  for all $n\geq 4$, Gromov--Thurston \cite{GromovThurston87} construct examples of closed, negatively curved $n$-manifolds that are not homotopy equivalent to real hyperbolic $n$-manifolds, but which were shown to be cubulated by Giralt \cite{Giralt17}. In fact, these manifolds contain enough \emph{totally geodesic} real hyperbolic $n$-manifolds to cubulate. 

The Gromov--Thurston examples underscore that for $n\geq 4$, if $G$ is a torsion-free hyperbolic group with $\dinf G\cong S^{n-1}$, the most one can hope for is that $G\cong \pi_1(M)$ for some closed aspherical $n$-manifold $M$. Remarkably, even without the cubulation assumption, Bartels--L\"uck--Weinberger \cite{BartelsLuckWeinberger10} show exactly this: if $G$ is a torsion-free hyperbolic group with $\dinf G\cong S^{n-1}$ for $n\geq 6$, then $G\cong \pi_1(M)$ for some closed aspherical $n$-manifold $M$. They remark that their proof ought to extend to the $n=5$ case as well, assuming certain surgery results hold. Therefore, Theorem \ref{thm:S3-cubulated} fills in a dimension where surgery techniques and the $s$-cobordism theorem are unavailable. Nevertheless, using results of Bartels--L\"uck \cite{BartelsLuck} on the Borel conjecture,  our proof works equally well for all $n\neq 5$, hence we will prove the main theorem in this generality:  

\begin{theorem}\label{thm:Sn-cubulated}
    Let $G$ be a torsion-free hyperbolic group such that $\dinf G\cong S^{n-1}$ and suppose $G$ contains quasi-convex, codimension-1 subgroups $\{H_1,\ldots, H_k\}$ satisfying:
    \begin{enumerate}
    \item The $G$-translates of the $\dinf H_i$ separate pairs of points in $\dinf G$.
    \item $\dinf H_i$ is a locally flat $(n-2)$-sphere in  $S^{n-1}\cong\dinf G $ for $i=1,\ldots, k$.  
    \end{enumerate}
    If $n\neq 5$, then there exists a finite-index, torsion-free subgroup $\hat{G}\leq G$ such that $\hat{G}\cong \pi_1(M)$, where $M$ is a closed orientable $n$-manifold covered by $\R^{n}$.
\end{theorem}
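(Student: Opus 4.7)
My plan is to adapt Markovic's strategy \cite{Markovic13} from the Cannon conjecture to higher dimensions, in three stages: (i) cubulate $G$ and pass to a torsion-free finite-index subgroup $\hat{G}$; (ii) construct a $\hat{G}$-invariant topological cell decomposition of $\mathbb{R}^n$ out of the locally flat sphere pattern at infinity; and (iii) identify the quotient with a closed aspherical $n$-manifold via Bartels--L\"uck's work on the Borel conjecture for hyperbolic groups.

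For (i), hypothesis (1) together with quasi-convexity and codimension-one-ness of the $H_i$ allows Sageev's dual cube complex construction to yield a proper cocompact action of $G$ on a finite-dimensional $\CAT$ cube complex $X$ whose hyperplane stabilizers are commensurable with conjugates of the $H_i$. By Agol--Wise residual finiteness I pass to a finite-index torsion-free subgroup $\hat{G}$ acting freely with orientable hyperplane stabilizers. Each such stabilizer, being quasi-convex with locally flat $(n-2)$-sphere boundary, satisfies the theorem's hypotheses in dimension $n-1$, with Markovic's theorem giving the $n=3$ base case; hence inductively every hyperplane stabilizer is virtually the fundamental group of a closed aspherical $(n-1)$-manifold, which is what controls the local topology across walls in the next stage.

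For (ii), hypothesis (2) and Brown's generalized Schoenflies theorem make each $\partial_\infty H_i \subset S^{n-1}$ a bicollared $(n-2)$-sphere bounding two topological $(n-1)$-balls, so the $\hat{G}$-translates form a locally finite bicollared sphere pattern on $S^{n-1} \cong \partial_\infty \hat{G}$ dual to the hyperplanes of $X$. I extend this pattern to a $\hat{G}$-equivariant system of properly embedded, locally flat $(n-1)$-spheres in $\mathbb{R}^n$, with nesting and crossing dictated by the cube complex combinatorics (using, for instance, the Bowditch-type compactification of $\hat{G}$ as a closed $n$-ball), obtaining a $\hat{G}$-equivariant decomposition of $\mathbb{R}^n$ into topological $n$-cells indexed by vertices of $X$. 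For (iii), the quotient is a candidate closed aspherical $n$-manifold with fundamental group $\hat{G}$; for $n \neq 5$, the topological rigidity consequences of Bartels--L\"uck's Farrell--Jones theorem \cite{BartelsLuck} upgrade this candidate---a priori at most a generalized homology manifold---to an honest topological manifold $M$ with universal cover $\mathbb{R}^n$, and passing to a further index-two subgroup ensures orientability. The main obstacle is stage (ii): propagating the codimension-one locally flat hypothesis to higher codimensions, since intersections of infinitely many $\hat{G}$-translates of the $\partial_\infty H_i$ could a priori be wild, and ruling this out requires the inductive manifold structure of the hyperplane stabilizers combined with relative Schoenflies-type arguments. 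The exclusion $n=5$ is forced by the unavailability of the requisite topological surgery and $s$-cobordism tools at that dimension, which is where the Bartels--L\"uck reduction breaks down.
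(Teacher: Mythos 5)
Your three-stage outline is in the right general spirit, but the mechanism you propose in stages (ii) and (iii) diverges substantively from what actually makes the proof work, and several of your specific claims would not survive an attempt to fill in details. The central missing idea is Markovic's \emph{$G$-complex} and \emph{refinement} machinery (the content of Section \ref{sec:G-complexes}). The paper does not try to simultaneously realize the entire wall pattern inside $\mathbb{R}^n$ using cube-complex combinatorics and then worry about higher-codimension wildness of intersections. Instead, for each $H_i$ \emph{separately}, the translates $\{g\Lambda_i\}$ form a dense null collection of locally flat $(n-2)$-spheres on $S^{n-1}$; Proposition \ref{prop:Uniformize-Sierpinski} straightens this to a round-sphere pattern, and one caps each round $g\Lambda_i$ by a round hemisphere $B_{gH_ig^{-1}}$ inside $D^n$, producing a single generalized cell decomposition $(K_i,\mathcal{U}_i)$ and a free convergence $G$-complex $(\rho_i,K_i,\mathcal{U}_i)$. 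These are then combined \emph{iteratively by refinement}, using Proposition \ref{prop:Markovic} to track convergence, freeness, and stabilizers. The condition that $\mathcal{H}$ separates points at infinity forces all cell stabilizers of the final refinement to be trivial, and a radial extension (Proposition \ref{prop:radial.extension}) then fills in $D^n$ completely. The walls from different $H_i$'s are thus never directly intersected in $S^{n-1}$, so the wildness issue you flag as the ``main obstacle'' simply does not arise.

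Two further corrections. First, the quotient in stage (iii) is \emph{not} an a priori generalized homology manifold that gets upgraded: once one has a free convergence action of $G$ on all of $D^n$, the restriction to $\interior(D^n)\cong\mathbb{R}^n$ is free, proper, and by homeomorphisms, so the quotient is an honest aspherical manifold by Proposition \ref{prop:quotient.of.maximal.G-action} with no surgery needed. Second, the Bartels--L\"uck input enters much earlier and more locally than you describe: in Lemma \ref{lem:Left-to-Right-Homeo}, the Borel conjecture for the closed aspherical $(n-1)$-manifolds $N_i^L = L_{H_i}/H_i$ and $N_i^R = R_{H_i}/H_i$ (which exist by Swenson's result, no induction on dimension required) produces an $H_i$-equivariant side-swap homeomorphism $\sigma_{H_i}\colon \overline{L}_{H_i}\to\overline{R}_{H_i}$; this is the building block for the $G$-action on the hemispheres $B_H$. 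The restriction $n\neq 5$ is precisely because this step needs Borel in dimension $n-1$, and dimension $4$ is open---not because an $s$-cobordism step fails in the quotient manifold at the end.
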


In the final section of the paper, we explore the local flatness condition in more depth. When $\dinf G=S^{n-1}$ and $H\leq G$ is a quasi-convex, codimension-1  subgroup such that $\dinf H=S^{n-2}$, we show
in Proposition \ref{prop:Flatness-Criterion} that $\dinf H$ is locally flat in $\dinf G$ if and only if both components of $\dinf G\setminus \dinf H$ are simply connected.

We then give two applications of this result, both of which may be regarded as generalizations of  analogous results for hyperbolic 3-manifolds.  A classical result, essentially due to Stallings \cite{Stallings61} asserts that if $V$ is the interior of a compact, orientable 3-manifold that is homotopy equivalent to an orientable surface $\Sigma$, then $M$ is homeomorphic to $\Sigma\times \R$. In particular, it follows that if $M$ is a closed, orientable hyperbolic 3-manifold and $V$ is the cover associated to a quasi-convex surface subgroup, then $V\cong \Sigma\times \R$ for some surface $\Sigma$.  In our setting, with $H\leq G$ as above, we say that $H$ is \emph{2-sided} if the action of $H$ on $\dinf G$ preserves each component of $\dinf G\setminus \dinf H$. We prove:

\begin{reptheorem}{thm:ProductCover}
Let  $M$ be a closed, orientable, aspherical $n$-manifold with $G=\pi_1(M)$ hyperbolic, $\dinf G\cong S^{n-1}$ and $n\geq 6$. Suppose $H\leq G$ is a quasi-convex, codimension-1, 2-sided subgroup such that $\dinf H\cong S^{n-2}$. If $C_H\rightarrow M$ denotes the cover associated to $H$,  then $\dinf H\subset \dinf G$ is locally flat if and only if there exists a closed, orientable $(n-1)$-manifold $N$ such that $C_H\cong N\times \R$.
\end{reptheorem}

Our second application further assumes that $G=\pi_1(M)$ is cubulated, and that $H\cong \pi_1(N)$ for some closed orientable manifold $N$. We prove that there exist finite covers of $\widehat M$ and $\widehat N$ of $M$ and $N$, respectively, such that $\widehat{N}$ embeds in $\widehat{M}$ as a submanifold:

\begin{reptheorem}{thm:Finite-Embedded-Cover}
Let  $M$ be a closed, orientable aspherical $n$-manifold with $G=\pi_1(M)$ cubulated hyperbolic and $n\geq 6$. Let $H\leq G$ be a quasi-convex, codimension-1 subgroup such that $H\cong \pi_1(N)$ for some closed aspherical $(n-1)$-manifold $N$. Then there exist finite covers $ \widehat{M}\rightarrow M$, $ \widehat{N}\rightarrow N$, and an embedding $\widehat{N}\hookrightarrow \widehat{M}$.
\end{reptheorem}
We regard this result as a generalization of the fact that a quasi-convex, $\pi_1$-injective surface subgroup of a hyperbolic 3-manifold group lifts to embedding in a finite cover. This is in turn a consequence of Agol's theorem \cite{Agol13} that cubulated hyperbolic groups are virtually special, and thus QCERF. However, in the case of surface subgroups of hyperbolic 3-manifold groups, one automatically has an immersed surface to try to lift. In the general case, we exploit a splitting theorem due to Cappell \cite{Cappell}, which allows us to find an embedded, 2-sided submanifold after passing to a cover. A similar technique (following Cappell's result in spirit) appeared in a paper of Kar--Niblo \cite{KarNiblo}.

\subsection{Overview of the paper} In Section \ref{sec:Preliminaries}, we review some results on quasi-convex subgroups of cubulated hyperbolic groups, the work of Bartels--L\"uck on the Borel conjecture for hyperbolic groups, 
and the topological generalized Schoenflies theorem of Mazur and Brown. 

In Section \ref{sec:G-complexes}, we introduce the concepts of generalized cell decompositions and $G$-complexes, first defined by Markovic. Although many of the results of this section are straightforward generalizations of the results in \cite{Markovic13}, we have provided many of the details for the convenience of the reader. Our exposition differs from Markovic in places, specifically with regard to condition $(\star)$ (see Remark \ref{rem:condition.star}), which is an attempt to recast some notions in purely topological terms. This paper arose in part from a desire to understand all of the ideas in \cite{Markovic13}, which we believe may be applicable more generally. We therefore hope that this alternative viewpoint will prove useful. Some of the more lengthy proofs have been relegated to the Appendix, when we felt that the techniques of the proof were not vital to understanding the remainder of the paper. 

In Section \ref{sec:Uniformizing-Sierpinski}, we review $n$-dimensional Sierpinski spaces and collect uniformization results about their embeddings in spheres. In \ref{sec:General-Markovic}, we prove Theorem \ref{thm:Sn-cubulated}.  Finally, in Section \ref{sec:local.flatness.of.codimension.one.submanifolds}, we discuss a condition which guarantees local flatness of boundaries of quasi-convex subgroups, and use it to prove the theorems about closed aspherical manifolds with cubulated hyperbolic fundamental group discussed above. 

\subsection{Acknowledgements} We would like to thank Vlad Markovic for several helpful comments. The first author was supported by NSF grant DMS-2401403. The second author was supported by the Austrian Science Fund (FWF) grant 10.55776/ESP124. For open access purposes, the authors have applied a CC BY public copyright license to any author-accepted manuscript version arising from this submission.




\section{Preliminary reductions}\label{sec:Preliminaries}
Suppose $G$ is cubulated hyperbolic. By results of Agol \cite{Agol13}, we know that $G$ is virtually special, hence virtually torsion-free. For simplicity, from now on we will  assume that $G$\emph{ is  a torsion-free hyperbolic group}. 

\subsection{Malnormal cubulations} 
Recall that if $G$ is a hyperbolic group and $H\leq G$ is a quasi-convex subgroup, then there is a closed, topological embedding $\dinf H\hookrightarrow \dinf G$.  If $\dinf G$ is connected, then $H$ has codimension-1 in $G$ if and only if 
$\dinf G\setminus \dinf H$ is disconnected.

\begin{definition}
Let $\mathcal{H}=\{H_1,\ldots, H_k\}$ be a collection of quasi-convex, codimension-1 subgroups of $G$. We say that $\mathcal{H}$ \emph{separates points at infinity} if for any two points $p\neq q\in\dinf G$, there exists a conjugate $H_i^g$ such that $p$ and $q$ lie in different connected components of $\dinf G\setminus \dinf H_i^g$.
\end{definition}

Bergeron--Wise \cite{BergeronWise12} prove that if $\mathcal{H}$ separates points at infinity  then there exists a finite-dimensional CAT(0) cube complex $X$ on which $G$ acts geometrically, and whose hyperplane stabilizers are conjugates of elements of $\mathcal{H}$. For this reason, if $\mathcal{H}=\{H_1,\ldots, H_k\}$ is a collection of quasi-convex, codimension-1 subgroups that separates points at infinity, we will say that $\mathcal{H}$ \emph{cubulates} $G$.

\begin{definition}
    A subgroup $H\leq G$ is called \emph{malnormal} if $H\cap H^g=\{1\}$ for every $g\notin H$. 
\end{definition}

Markovic showed that after passing to a cover, one can assume that each element of $\mathcal{H}$ is malnormal:

\begin{theorem}[Theorem 2.1, \cite{Markovic13}]\label{thm:Markovic-Malnormal} Suppose $G$ is hyperbolic and cubulated with respect to a collection of quasi-convex, codimension-1 subgroups $\mathcal{H}$.  Then there exists a finite-index subgroup $\widehat{G}\leq G$ that is cubulated with respect to a collection of malnormal, quasi-convex, codimension-1 subgroups.
\end{theorem}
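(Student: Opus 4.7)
The plan is to use Agol's virtual specialness theorem, combined with the standard machinery of double-coset separability and finiteness results for quasi-convex subgroups of hyperbolic groups, to eliminate all obstructions to malnormality by passing to a finite-index subgroup. Since cubulated hyperbolic groups are virtually special, up to a first finite-index reduction we may assume $G$ itself is special, and hence QCERF: every quasi-convex subgroup of $G$ is separable. For each $H_i \in \mathcal{H}$ we also need to control its commensurator $C_i = \mathrm{Comm}_G(H_i)$; because $H_i$ is quasi-convex in the hyperbolic group $G$, results of Kapovich--Short show that $C_i$ is quasi-convex and contains $H_i$ as a subgroup of finite index. In particular $C_i$ is separable as well.

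The next key input is the finiteness theorem of Gitik--Mitra--Rips--Sageev on heights of quasi-convex subgroups: the collection of double cosets $H_i g H_i$ for which $H_i \cap g H_i g^{-1}$ is infinite is finite. Since $G$ is torsion-free, ``infinite'' here is the same as ``non-trivial.'' Choose finitely many representatives $g_{i,1},\ldots,g_{i,N_i}$ of these ``bad'' double cosets, and partition them into those lying in $C_i$ (for which $H_i g_{i,j} H_i \subseteq C_i$) and those lying outside $C_i$. Independently, pick finitely many coset representatives $c_{i,1},\ldots,c_{i,m_i}$ for the non-trivial cosets of $H_i$ in $C_i$.

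Using separability of $H_i$ in $G$, for each of the finitely many ``bad'' elements $c_{i,\ell}$ and $g_{i,j}\notin C_i$ (across all $i$), choose a finite-index subgroup of $G$ containing $H_i$ but missing that element. Intersecting these finitely many subgroups yields a finite-index $\widehat{G}\leq G$ with $H_i\subseteq\widehat{G}$ for every $i$, and avoiding every non-trivial coset representative of $H_i$ in $C_i$ as well as every bad representative outside $C_i$. Now if $g\in\widehat{G}\setminus H_i$ were to satisfy $H_i\cap gH_ig^{-1}\neq 1$, then $g\in H_i g_{i,j} H_i$ for some $j$; if $g_{i,j}\in C_i$ then $g\in C_i$ and $g$ lies in some $c_{i,\ell}H_i$ with $\ell\geq 1$, giving $c_{i,\ell}\in\widehat{G}$, a contradiction; while if $g_{i,j}\notin C_i$ then writing $g=h_1 g_{i,j} h_2$ with $h_1,h_2\in H_i\subseteq\widehat{G}$ forces $g_{i,j}\in\widehat{G}$, again a contradiction. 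Hence each $H_i$ is malnormal in $\widehat{G}$.

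Finally, $\widehat{G}$ still acts geometrically on the same CAT(0) cube complex from the original cubulation, the hyperplane stabilizers are $\widehat{G}$-conjugates of the $H_i$'s (intersected with $\widehat{G}$, which equals $H_i$ since $H_i\subseteq\widehat{G}$), and malnormality is conjugation-invariant; thus $\widehat{G}$ is cubulated with respect to a collection of malnormal, quasi-convex, codimension-1 subgroups. The main obstacle, and the step where one must be most careful, is ensuring the commensurator $C_i$ is itself well-behaved (quasi-convex, with $H_i$ of finite index), because the argument must kill bad elements both \emph{inside} and \emph{outside} the commensurator; without controlling $C_i$, separability alone only eliminates pathologies outside a fixed commensurator-sized neighborhood of $H_i$ and the remaining overlaps coming from $C_i\setminus H_i$ would persist.
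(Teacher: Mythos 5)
Your overall toolkit is the right one (Agol's QCERF for cubulated hyperbolic groups plus the GMRS finiteness of intersecting double cosets, fed into a separability argument), but there are two genuine gaps in the assembly, both of which are fixable but neither of which you address.

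First, the intersection step fails to deliver what you claim. You produce, for each bad element attached to $H_i$, a finite-index subgroup $L$ containing $H_i$ and avoiding that element, and then declare that $\widehat{G}=\bigcap L$ contains every $H_i$. But the subgroup $L$ chosen for a bad element of $H_1$ has no reason to contain $H_2$, so the intersection generally loses the containment $H_{i'}\leq\widehat{G}$ for $i'\neq i$. Everything downstream of ``with $H_i\subseteq\widehat G$ for every $i$'' is therefore unjustified as written. The correct move is to give up on $H_i\leq\widehat{G}$ and instead prove that $\widehat{H}_i:=H_i\cap\widehat{G}$ is malnormal in $\widehat{G}$: if $g\in\widehat G\setminus\widehat H_i$ and $\widehat H_i\cap g\widehat H_ig^{-1}\neq 1$, then $g\in H_ig_{i,j}H_i$ with $g_{i,j}\notin H_i$, and since $g\in\widehat G\subseteq L_{i,j}$ while $H_i\subseteq L_{i,j}$ one concludes $g_{i,j}\in L_{i,j}$, contradicting the choice of $L_{i,j}$. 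Note the contradiction is with $g_{i,j}\notin L_{i,j}$, not with $g_{i,j}\notin\widehat G$. The subgroup $\widehat H_i$ is still quasi-convex and codimension-1 (same limit set), so this replacement is harmless.

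Second, your description of the cubulation of $\widehat{G}$ is wrong. Restricting the $G$-action on the cube complex to $\widehat{G}$, the hyperplane stabilizers are the groups $\widehat{G}\cap gH_ig^{-1}$ as $g$ ranges over $(\widehat G, H_i)$ double-coset representatives, and these are \emph{not} $\widehat{G}$-conjugates of $H_i\cap\widehat G$ when $g\notin\widehat G$ and $\widehat G$ is not normal. You have only shown (modulo the first gap) malnormality of $H_i\cap\widehat G$, not of these other stabilizers, so the conclusion does not follow. The clean fix is to replace $\widehat{G}$ by its normal core in $G$. Malnormality of $H_i\cap\widehat G$ in $\widehat G$ passes to further finite-index subgroups, so this costs nothing; and once $\widehat G$ is normal one has $\widehat G\cap gH_ig^{-1}=g(\widehat G\cap H_i)g^{-1}$, and malnormality of $\widehat H_i$ in a normal $\widehat G$ immediately gives malnormality of $g\widehat H_ig^{-1}$ in $\widehat G$ for every $g\in G$. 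This normalization step is essential and is entirely missing.

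A smaller point: the excursion through the commensurator $C_i$ does real work nowhere. Once the $g_{i,j}$ are chosen to lie outside $H_i$, separability of $H_i$ lets you exclude each $g_{i,j}$ directly, whether or not $g_{i,j}\in C_i$; splitting into cases inside/outside $C_i$ and separately excluding the $c_{i,\ell}$ just duplicates the same idea. Your final paragraph diagnoses the commensurator as the main obstacle, but it is not; the real obstacles are the two assembly issues above.
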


\subsection{The Borel conjecture for hyperbolic groups}
Let $G$ be a finitely generated group and let $M$ be a closed, aspherical $M$ manifold such that $G\cong \pi_1(M)$. In particular, $M$ is a finite-dimensional $K(G,1)$, and therefore $G$ is torsion-free. The Borel conjecture asks whether $M$ is unique up to homeomorphism. More precisely, if $N$ is another $n$-manifold and $f\colon M\rightarrow N$ is a homotopy equivalence, is $f$ necessarily homotopic to a homeomorphism?

The Borel conjecture holds in low dimensions as a consequence of the classification of manifolds of dimensions $n\leq 3$. In dimension 1, the circle is the only closed 1-manifold.  In dimension 2, it follows from classification of closed surfaces.  In dimension 3, it is now a corollary of the Geometrization Theorem due to Perelman \cite{Perelman02, Perelman03a, Perelman03b}, relying on previous work of Waldhausen, for Haken 3-manifolds, Scott \cite{Scott83} for Seifert-fibered 3-manifolds, and Mostow rigidity for hyperbolic 3-manifolds (earlier, Gabai--Meyerhoff--Thurston \cite{GabaiMeyerhoffThurston03} had shown that a homotopy hyperbolic 3-manifold is indeed homeomorphic to a hyperbolic 3-manifold).  In high dimensions $n\geq 5$, Farrell--Jones \cite{FarrellJones89b, FarrellJones89a} proved the Borel conjecture when $M$ admits a metric of non-positive curvature. However, they also proved that the analogous conjecture is false in the smooth category \cite{FarrellJonesExotic}. 

The Borel conjecture has been solved in high dimensions for torsion-free hyperbolic groups by work of Bartels--L\"uck \cite{BartelsLuck}. In fact, they prove it for a much more general class of groups, but we will only need the following special case.

\begin{theorem}[Bartels--L\"uck]\label{thm:Hyperbolic-Borel-Conjecture}
    Let $M$ be a closed, aspherical manifold of dimension $n\geq 5$ such that $\pi_1(M)$ is hyperbolic.  If $N$ is an $n$-manifold and  $f\colon M\rightarrow N$ is a homotopy equivalence, then $f$ is homotopic to a homeomorphism.
\end{theorem}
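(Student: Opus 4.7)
The plan is to reduce the statement to the \emph{Farrell--Jones conjecture} in $K$- and $L$-theory for $G := \pi_1(M)$, and then to verify this conjecture for all torsion-free hyperbolic groups. For the reduction, the classical topological surgery exact sequence of Kirby--Siebenmann and Wall, available for $n \geq 5$, takes the form
$$H_n(M; \mathbf{L}) \to L_n(\mathbb{Z}[G]) \to \mathcal{S}^{\mathrm{TOP}}(M) \to H_{n-1}(M; \mathbf{L}),$$
with the outer arrows equal to the assembly maps. Because $M$ is aspherical, the $L$-theoretic Farrell--Jones conjecture for $G$ forces both assembly maps to be isomorphisms, so that $\mathcal{S}^{\mathrm{TOP}}(M)$ consists of a single point, and every homotopy equivalence $f \colon M \to N$ is homotopic to a homeomorphism. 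This reduces the problem to the Farrell--Jones conjecture.

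To establish the Farrell--Jones conjecture for a torsion-free hyperbolic group $G$, I would follow the Bartels--Farrell--Lück--Reich program of verifying a \emph{transfer reducibility} condition relative to the family of virtually cyclic subgroups. The first step is to construct an equivariant flow space $F$ from the Rips complex of $G$ together with the compactification by $\dinf G$, using a variant of the Mineyev--Yu topological geodesic flow. The second step is to produce, for each natural number $N$, a $G$-equivariant open cover of $G \times F$ of uniformly bounded dimension whose members are \emph{long and thin}: each has diameter uniformly small transverse to the flow while containing a flow segment of length at least $N$, and the isotropy groups of the cover lie in the virtually cyclic family. From the existence of such \emph{long thin covers} the Bartels--Lück machinery deduces that the $K$- and $L$-theoretic assembly maps are isomorphisms, which by the above reduction yields the Borel conclusion for $M$.

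The main obstacle is the construction of the long thin covers. One must work uniformly over the interior of the Rips complex, where the flow resembles the geodesic flow on $\Hy^n$, and near the boundary $\dinf G$, where flow lines accumulate and nearby orbits separate exponentially. Thinness and length are in tension, and $G$-equivariance forbids naive rescaling. The decisive geometric input is Gromov hyperbolicity: $\delta$-thin triangles yield bicombings and sharp separation estimates for nearby geodesic rays, which allow one to build covers at boundary points by pulling back small products of balls in $\dinf G \times \dinf G$ along carefully chosen flow intervals, and then to glue these with interior covers obtained from the cocompact action of $G$ on its Rips complex. Producing these covers, together with verifying that the resulting construction has bounded covering dimension and virtually cyclic isotropy, is the technical heart of the argument.
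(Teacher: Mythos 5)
The paper does not prove this theorem; it is quoted verbatim as a result of Bartels--L\"uck \cite{BartelsLuck} and used as a black box throughout. Your proposal is therefore reconstructing a theorem that the paper only cites, and your roadmap is an accurate description of the route actually taken in the literature: reduce the Borel conjecture to the Farrell--Jones conjecture in $K$- and $L$-theory via the topological surgery exact sequence, and then verify Farrell--Jones for hyperbolic groups via transfer reducibility over the virtually cyclic family, with the construction of long thin covers on a flow space built from the Rips complex compactified by $\dinf G$ as the technical heart. That is exactly the Bartels--L\"uck--Reich strategy.

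Two small corrections to your reduction paragraph. First, the degrees in your surgery exact sequence are off by one: the piece relevant to triviality of the structure set is
\[
\cdots \to H_{n+1}(M;\mathbf{L}\langle 1\rangle) \xrightarrow{A} L_{n+1}(\mathbb{Z}[G]) \to \mathcal{S}^{\mathrm{TOP}}(M) \to H_n(M;\mathbf{L}\langle 1\rangle) \xrightarrow{A} L_n(\mathbb{Z}[G]),
\]
with $A$ the assembly map, and one needs surjectivity of $A$ in degree $n+1$ and injectivity in degree $n$. Second, your reduction only explicitly invokes the $L$-theoretic Farrell--Jones isomorphism, but the $K$-theoretic version is genuinely needed at two points: to conclude $\mathrm{Wh}(G)=0$ (so that the simple structure set agrees with the structure set and the $s$-cobordism theorem applies), and to pass between assembly with the connective spectrum $\mathbf{L}\langle 1\rangle$ appearing in surgery theory and the periodic spectrum $\mathbf{L}$ appearing in the Farrell--Jones conjecture, a decalage controlled by lower $K$-groups. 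You flag $K$- and $L$-theory in your opening sentence but then suppress the $K$-theoretic input in the argument; it should be made explicit. With these adjustments the sketch is a faithful outline, though of course each remaining step (transfer reducibility implies Farrell--Jones, existence of long thin covers, vanishing of Nil-terms) is itself a substantial theorem rather than something one can reprove in a few lines.
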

Combining this with the low-dimensional results mentioned above we have:
\begin{corollary}\label{cor:Hyperbolic-Borel-Except-4}
    Let $M$ be a closed, aspherical manifold of dimension $n\neq 4$ such that $\pi_1(M)$ is hyperbolic. If $N$ is an $n$-manifold and $f\colon M\rightarrow N$ is a homotopy equivalence, then $f$ is homotopic to a homeomorphism.
\end{corollary}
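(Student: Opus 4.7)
The plan is to dispose of the case $n \geq 5$ by direct citation of Theorem \ref{thm:Hyperbolic-Borel-Conjecture}, and to handle $n \leq 3$ separately using the low-dimensional classification results summarized in the preceding paragraph. In each of the low-dimensional cases, the first observation is that since $f \colon M \to N$ is a homotopy equivalence with $M$ a closed aspherical $n$-manifold and $N$ an $n$-manifold, $N$ has the homotopy type of a $K(\pi_1(M),1)$; Poincar\'e duality (comparing the top (co)homology of $M$ and $N$) forces $N$ to be closed, and asphericity of $N$ is then automatic.

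For $n = 1$, the only closed aspherical $1$-manifold is $S^1$, so $N \cong S^1$ and $f$ has degree $\pm 1$; any such self-map of $S^1$ is homotopic to a homeomorphism. For $n = 2$, hyperbolicity of $\pi_1(M)$ together with asphericity forces $M$ to be a closed surface of negative Euler characteristic, and by the classification of closed surfaces $N$ is homeomorphic to $M$. The Dehn--Nielsen--Baer theorem then provides a homeomorphism $h\colon M\to N$ with $h_\ast = f_\ast$ on $\pi_1$, and $h^{-1} \circ f$ induces the identity on $\pi_1(M)$, hence is homotopic to the identity by asphericity; so $f \simeq h$.

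For $n = 3$, I would combine the Geometrization Theorem \cite{Perelman02, Perelman03a, Perelman03b} with the structural constraints on hyperbolic groups. Since $\pi_1(M) \cong \pi_1(N)$ is word-hyperbolic, it contains no $\Z^2$ subgroup and no infinite-cyclic normal subgroup, so the JSJ decompositions of $M$ and $N$ must be trivial and neither manifold can admit a Seifert-fibered structure; Geometrization therefore forces both $M$ and $N$ to be closed hyperbolic $3$-manifolds. Mostow rigidity then produces an isometry $h\colon M \to N$ realizing $f_\ast$, and the same asphericity argument as in dimension $2$ yields $f \simeq h$.

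The step requiring the most care is the $n = 3$ case, where Geometrization and the constraints on hyperbolic groups must be combined to eliminate the non-hyperbolic pieces of the geometric decomposition of a closed aspherical $3$-manifold with word-hyperbolic fundamental group. Once this reduction to the hyperbolic geometry is in place, the remainder of the argument is Mostow rigidity plus the standard fact that two maps between aspherical spaces inducing the same homomorphism on $\pi_1$ are homotopic.
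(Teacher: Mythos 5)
Your proposal is correct and follows the same route as the paper: cite Theorem~\ref{thm:Hyperbolic-Borel-Conjecture} for $n \geq 5$ and fall back on the low-dimensional classification results summarized just before the corollary for $n \leq 3$. The one refinement you add is that in dimension $3$ hyperbolicity of $\pi_1$ lets you bypass the Seifert-fibered and Haken cases of the full 3-dimensional Borel conjecture and go directly to Geometrization plus Mostow rigidity, which is a clean way to package the citation but not a different proof strategy.
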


\subsection{Local flatness and the generalized Schoenflies theorem}

In contrast to the low-dimensional case where $\partial_{\infty} G \cong S^2$ the properties of the embedding of $\partial_{\infty} H_i \hookrightarrow \partial_{\infty} G$ become more delicate in higher dimensions. To deal with this, we need some terminology and results from the theory of embeddings.

\begin{definition} \label{def:locally.flat}
Let $k < n$, $M^k$ and $N^n$ a $k$- and $n$-dimensional manifold respectively and let $x \in M$. An embedding $e : M \hookrightarrow N$ is called {\it locally flat at $x$}, if there exists a neighborhood $U$ of $e(x)$ in $N$ such that $(U, U \cap e(M))$ is homeomorphic to $(\mathbb{R}^n, \mathbb{R}^k)$. We say that $e$ is {\it locally flat}, if it is locally flat at every point in $M$.
\end{definition}

\begin{definition} \label{def:tame.ball}
    Let $N^n$ be an $n$-dimensional manifold, $k \leq n$, and $D^k$ the closed unit ball in $\mathbb{R}^k$. We call an embedding $e : D^k \hookrightarrow N$ a {\it tame $k$-ball}, if the restriction of $e$ to $\partial D^k$ is a locally flat embedding.
\end{definition}

\begin{definition} \label{def:standard.embedding}
    Let $S^{n}=\{(x_0,\ldots,x_n)\mid \sum_{i=0}^nx_i^2=1\}\subset \R^{n+1}$ be the $n$-sphere. For $k\leq n$, the \emph{standard} $S^k$ in $S^n$ is the subspace formed as the intersection of $S^n$ with each of the hyperplanes $H_i=\{x_i=0\}$ for $k+1\leq i\leq n$.
\end{definition}

The generalized Schoenflies theorem due to Mazur \cite{Mazur} and Brown \cite{Brown} relates the notion of local flatness with standard embeddings in codimension 1.

\begin{theorem}[Generalized Schoenflies theorem] \label{thm:generalized.Schoenflies}
    Let $e : S^{n-1} \hookrightarrow S^n$ be a locally flat embedding and let $Z$ be one of the two connected components of $S^n \setminus e(S^{n-1})$. Then there exists a homeomorphism $(\overline{Z}, Z) \rightarrow (D^n, \partial D^n)$.
\end{theorem}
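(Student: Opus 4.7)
The plan is to follow the classical Mazur--Brown argument, which has two main ingredients. First, I would upgrade $e$ to a bicollared embedding, i.e., an embedding $\phi \colon S^{n-1}\times[-1,1] \hookrightarrow S^n$ with $\phi(x,0) = e(x)$. Local flatness provides, at each $x \in S^{n-1}$, a coordinate neighborhood in $S^n$ in which $e(S^{n-1})$ sits as the standard $\mathbb{R}^{n-1}\subset \mathbb{R}^n$, and hence a local two-sided product structure. By compactness of $S^{n-1}$ one extracts a finite subcover and inductively merges these local collars over their overlaps via isotopies supported in overlap regions, producing a global bicollar. This is Brown's topological collaring argument, valid in the purely continuous category; it is the only place where local flatness (as opposed to mere embeddedness) is used.

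Once $\phi$ is in hand, let $Z$ be one of the two components of $S^n\setminus e(S^{n-1})$ and decompose $\overline{Z} = \phi(S^{n-1}\times[0,1)) \sqcup K$, where $K$ is the ``core'' of $\overline{Z}$ lying beyond the half-collar. The swindle proceeds by using the collar parameter to define a sequence of self-embeddings $\psi_k \colon \overline{Z}\hookrightarrow \overline{Z}$ that compress $\overline{Z}$ into the shrinking region $\phi(S^{n-1}\times[1-2^{-k},1))\cup K$ while fixing $K$ pointwise. Performing the analogous telescope inside $D^n$ using its standard radial collar and matching the two telescopes level by level produces a homeomorphism $D^n\setminus\{0\} \to \overline{Z}\setminus K$ that identifies each concentric sphere $\{|x|=2^{-k}\}$ with a level set of the collar. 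The final task is to extend this across the boundary by crushing $K$ to $\{0\}$, i.e., to show that the decomposition of $\overline{Z}$ into $K$ and singletons is topologically equivalent to collapsing the origin in $D^n$. This extension follows from Brown's theorem on cellular decompositions, whose shrinking criterion is verified by the observation that the $\psi_k$ compress $K$ into arbitrarily small open neighborhoods. Under the resulting homeomorphism $\overline{Z}\cong D^n$, the frontier $e(S^{n-1}) = \phi(S^{n-1}\times\{0\})$ corresponds to the outermost sphere of the telescope, namely $\partial D^n$, matching boundaries as required.

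The main obstacle is the continuity step at the end: verifying that the bijection provided by the telescope extends to a genuine homeomorphism that collapses $K$ to a point. This is precisely where the bicollar---rather than mere local flatness---is essential, since the collar is what gives the room to push $K$ into arbitrarily small neighborhoods and apply the shrinking criterion. I would expect the bicollaring step in Brown's form to be standard but technical (it is the content of the ``collar neighborhood theorem'' for locally flat codimension-one submanifolds), while the swindle itself is conceptually clean once one has the right packaging via cellular/upper semicontinuous decompositions. An alternative, but less uniform, route would be to first establish the theorem for smooth embeddings (where one can appeal to the $h$-cobordism theorem in higher dimensions) and then approximate; the Mazur--Brown argument above has the virtue of working in all dimensions $n\geq 2$ purely topologically.
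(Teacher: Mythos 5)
The paper does not prove this result; it states it with citations to Mazur and Brown and uses it as a black box (see Remark~\ref{rem:schoenflies.implies.standard}). So there is no in-paper proof to compare against, and your sketch is a reconstruction of the cited argument. The overall shape is right, but two steps deserve flags. The bicollaring reduction is itself a hard theorem, not a compactness-plus-merging exercise: Brown's 1960 Schoenflies paper \emph{assumes} a bicollar, and the reduction from local flatness to bicollaredness appeared only in a separate 1962 paper of Brown, resting on uniqueness of collars up to ambient isotopy. A naive finite-cover merge does not go through because the local collars can twist relative to one another, and without a uniqueness-of-collars lemma there is nothing to straighten them out on the overlaps.

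More seriously, your closing appeal to ``Brown's theorem on cellular decompositions'' to collapse the core $K$ risks circularity. The general statement that crushing a cellular subset of $S^n$ yields $S^n$ is not an elementary prerequisite: proving it for arbitrary nested cells requires something like the annulus theorem in order to push one cell inside another, which is at least as deep as Schoenflies itself. Brown's actual proof never invokes a general cellularity theorem; the bicollar supplies explicit compressions $\psi_k$ by hand, and one verifies the Bing shrinking criterion directly for the very specific decomposition whose only nondegenerate elements are the two collar-truncated cores. Your $\psi_k$ go halfway, but the uniform control the shrinking criterion demands --- shrink $K$ to arbitrarily small diameter, keep the map fixed outside a prescribed neighborhood, and move every image point by a controlled amount --- is the real content of the step and should be verified rather than delegated to a black-box theorem whose standard proofs may themselves rely on Schoenflies. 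With those two points repaired, your outline matches the Mazur--Brown proof the paper is citing.
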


\begin{remark} \label{rem:schoenflies.implies.standard}
    The generalized Schoenflies theorem implies that locally flat embeddings $S^{n-1} \hookrightarrow S^n$ are conjugate by a homeomorphism to the standard embedding. Indeed, let $Z^{+}$ and $Z^{-}$ be the two connected components of $S^n \setminus e(S^{n-1})$. By the generalized Schoenflies theorem, both $\overline{Z^{+}}$ and $\overline{Z^{-}}$ are homeomorphic to the closed disk $D^{n}\subset\mathbb{R}^{n}$. In particular the glueing $\overline{Z^{+}} \coprod_{\Lambda} \overline{Z^{-}}$ is homeomorphic to the glueing $D^{n} \coprod_{S^{n-1}} D^{n}$ that produces an $n$-dimensional sphere with a standard $S^{n-1}$ inside. Therefore, the homeomorphisms $\overline{Z^{\pm}} \rightarrow D^{n}$ allow us to construct a homeomorphism $(\partial_{\infty} G, \Lambda) \rightarrow (S^{n}, S^{n-1})$, where $S^{n-1}\hookrightarrow S^{n}$ is standard.
\end{remark}




\section{Review of $G$-complexes}\label{sec:G-complexes}

For $n \geq 3$, let $D^{n}$ denote the closed unit ball in $\mathbb{R}^{n}$ and $S^{n-1} = \partial D^{n}$ its boundary. We denote the interior by $\interior(D^{n})$. In this section, we review and generalize the notion of $G$-complexes used in \cite{Markovic13}. Our main-objective is to show that all the key results which Markovic proved for $G$-complexes on the pair $(D^3, S^2)$ hold for the pair $(D^{n}, S^{n-1})$ as well. For the convenience of the reader, we have reproduced many of the proofs here or in Appendix \ref{sec:Appendix}. We feel that some of the ideas and techniques herein may be useful more generally; for that reason we have erred on the side of providing all the details in our exposition. However, when the proofs truly do not differ significantly from those of \cite{Markovic13} apart from superficial changes, we will simply refer to that paper.  

Before we start, we deal with some basic notation, definitions, and conventions. Let $X$ be a compact metric space, $C \subset X$ and $a \in X$. Throughout this section, we use the notation
\[ d(C, x) := \sup \{ d(c,x) \vert c \in C \}. \]

We will use the following terminology from \cite{KarrerMiraftabZbinden24}.

\begin{definition}
    Let $X$ be as above and $(U_m)_m$ a sequence of subsets. We define the limit $\lim_{m \rightarrow \infty} U_m$ to be the collection of points obtained as limits of convergent sequences $(x_m)_m$, where $x_m \in U_m$ for every $m$.
\end{definition}

Note that if $X$ is a compact metric space, then a sequence $(U_m)_m$ converges to a singleton set $\{ p \}$, $p \in X$ if and only if for every neighbourhood $U$ of $p$, there exists $M$ such that $U_m \subset U$ for all $m \geq M$.

Let $H < \Homeo(X)$ be a subgroup. We say that $H$ is a {\it convergence group} if for every sequence $(h_m)_m$ of distinct $h_m$ in $G$, there exists a subsequence $(h_{m_k})_k$ and points $a, b \in X$ such that $h_{m_k} \rightarrow a$ uniformly on compact subsets of $X \setminus \{ b \}$. (We denote the constant map that sends all of $X$ to $a$ by $a$ as well.) Note that in this case the sequence $(h_{n_k}^{-1})_k$ converges to $b$ uniformly on compact subsets of $X \setminus \{ a \}$.

Let $G$ be a group. A $G$-action on $X$ is a monomorphism $G \rightarrow \Homeo(X)$ (all our actions are faithful). Regarding group multiplication, we use the convention that $\mu(gh) = \mu(h) \circ \mu(g)$.


\subsection{Generalized cell decompositions and $G$-complexes} \label{subsec:generalized.cell.decompositions.and.G-complexes}

\begin{definition} \label{def:generalized.cell.decomposition}
    Let $K \subset D^{n}$ be a closed subset 
    and let $\mathcal{U}$ denote the collection of connected components of $D^{n} \setminus K$. We say the pair $(K, \mathcal{U})$ is a {\it generalized cell decomposition of $D^{n}$} if the following holds:
    \begin{itemize}
        \item $S^{n-1} \subset K$.
        
        \item Every open set $U \in \mathcal{U}$ is homeomorphic to $\interior(D^{n})$ and its boundary $\partial U$ is homeomorphic to $S^{n-1}$.

        \item Every sequence $(U_m)_m$ of distinct elements in $\mathcal{U}$ has a subsequence $(U_{m_k})_k$ such that $\lim_{k \rightarrow \infty} U_{m_k}$ consists of a single point.

    \end{itemize}
\end{definition}

\begin{remark} \label{rem:condition.star}
    Our definition of a generalized cell decomposition differs from the one given in \cite{Markovic13}. Instead of our third condition, Markovic requires the following:
    \begin{itemize}
        \item[$(\star)$] For every $\delta > 0$, there exists $N(\delta) \in \mathbb{N}$ such that there are at most $N(\delta)$ many elements in $\mathcal{U}'$ that have diameter greater than $\delta$.
    \end{itemize}
    It is an easy exercise to see that this is equivalent to our third condition (using the fact that $D^n$ is compact). Therefore, this is an equivalent definition to the one given in \cite{Markovic13} and we exploit both $(\star)$ and the third property in our definition. For technical reasons that will become apparent later in this section, we prefer the third condition to be expressed in purely topological terms.
    
\end{remark}

The following two elementary examples of generalized cell decompositions will be useful. First, we may consider $K = S^{n-1}$. Then $\mathcal{U}$ consists of one element, the set $\interior(D^{n})$. Second, we may consider $K = D^{n}$. In this case, $\mathcal{U}$ is the empty set. These two examples are the minimal and maximal generalized cell decompositions with respect to the following partial order.

\begin{definition} \label{def:refinement}
    Let $(K, \mathcal{U})$, $(K', \mathcal{U}')$ be two generalized cell decompositions of $D^{n}$. We say that $(K, \mathcal{U})$ is a {\it refinement} of $(K', \mathcal{U}')$ if $K' \subset K$. This defines a partial order on the collection of all generalized cell decompositions of $D^{n}$.
\end{definition}

We now introduce the notion of an action of a group $G$ on a generalized cell decomposition.

\begin{definition} \label{def:G-action}
    Let $G$ be a group and $(K, \mathcal{U})$ a generalized cell decomposition of $D^{n}$. A {$G$-action} on $(K, \mathcal{U})$ is a $G$-action $\mu : G \rightarrow \Homeo(K)$ such that
    \begin{enumerate}
        \item For every $g \in G$, $\mu(g)(S^{n-1}) = S^{n-1}$.

        \item For all $g \in G$, $U \in \mathcal{U}$, there exists $U' \in \mathcal{U}$ such that $\mu(g)(\partial U) = \partial U'$. 
    \end{enumerate}
\end{definition}

Note that every $G$-action on $(K, \mathcal{U})$ induces a $G$-action on $S^{n-1}$. We will assume for all our $G$-actions that they act by orientation preserving homeomorphisms on $S^{n-1}$.\\

A $G$-action induces an action on the collection of `cells' $\mathcal{U}$. This follows from the following Lemma.

\begin{lemma} \label{lem:action.on.cells.well-defined}
    Let $V_1, V_2 \in \mathcal{U}$ such that $\partial V_1 = \partial V_2$. Then $V_1 = V_2$.
\end{lemma}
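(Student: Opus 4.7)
The plan is to use the Jordan--Brouwer separation theorem (which requires no local flatness hypothesis) to identify each $V_i$ as the unique bounded complementary component of $\Sigma := \partial V_1 = \partial V_2$ in an ambient $\R^n$.

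First, I identify $D^n$ with the closed unit ball in $\R^n$. Because $S^{n-1} \subset K$ and $V_i \cap K = \emptyset$, each $V_i$ lies in $\interior(D^n)$, so $V_i$ is a connected open subset of $\R^n$. Its topological boundary computed in $\R^n$ agrees with its boundary in $D^n$, namely $\Sigma$; this uses only that $D^n$ is closed in $\R^n$, so that closures of subsets of $D^n$ in $\R^n$ and in $D^n$ coincide.

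Next, Jordan--Brouwer applied to the topologically embedded sphere $\Sigma \cong S^{n-1}$ in $\R^n$ yields exactly two complementary components, a bounded one $C_{\mathrm{in}}$ and an unbounded one $C_{\mathrm{out}}$, each having $\Sigma$ as its topological boundary. Since each $V_i$ is connected, disjoint from $\Sigma$, and bounded (as a subset of $D^n$), it must lie inside $C_{\mathrm{in}}$. Then $V_i$ is open in $C_{\mathrm{in}}$, and because $\partial_{\R^n} V_i = \Sigma$ is disjoint from $C_{\mathrm{in}}$, it is also closed in $C_{\mathrm{in}}$. Connectedness of $C_{\mathrm{in}}$ and non-emptiness of $V_i$ then give $V_i = C_{\mathrm{in}}$, and therefore $V_1 = V_2$.

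The only non-elementary ingredient is Jordan--Brouwer in the form valid for arbitrary topological embeddings $S^{n-1} \hookrightarrow \R^n$. I do not expect a serious obstacle: the possible wildness of $\Sigma$ is harmless, because we never need any structural information about $C_{\mathrm{in}}$ itself (for instance, we are not asserting it is a ball), only that it is the unique bounded component of $\R^n \setminus \Sigma$ and that $\Sigma$ is its full boundary.
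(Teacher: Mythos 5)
Your proof is correct and takes a genuinely different route from the paper's. The paper argues by contradiction: if $V_1 \neq V_2$ then, being components of $D^n\setminus K$, they are disjoint, and $\overline{V_1}\cup\overline{V_2}$ is a closed $n$-manifold $(\cong D^n\coprod_{S^{n-1}}D^n\approx S^n)$ embedded in $D^n\subset\R^n$, which is impossible. That argument relies on the pair $(\overline{V_i},\partial V_i)$ being homeomorphic to $(D^n,S^{n-1})$. Your route instead shows each $V_i$ is \emph{determined} by the common sphere $\Sigma$ via Jordan--Brouwer, as the unique bounded complementary component; this uses only the hypothesis $\partial V_i\cong S^{n-1}$ and nothing about the interior $V_i$ or the pair $(\overline{V_i},\partial V_i)$, so it is somewhat more elementary and applies under weaker hypotheses. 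Both arguments ultimately rest on the same underlying fact (a compact $n$-manifold cannot embed in $\R^n$), since that is what drives Jordan--Brouwer; the difference is whether one invokes it directly or through the separation theorem.

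One small step deserves repair. The sentence ``Since each $V_i$ is connected, disjoint from $\Sigma$, and bounded $\ldots$, it must lie inside $C_{\mathrm{in}}$'' is not justified as written: a bounded connected set disjoint from $\Sigma$ need not lie in the bounded component (a small ball far from a round $\Sigma$ is a counterexample). The fix is to note that $V_i$ lies in exactly one of $C_{\mathrm{in}}$, $C_{\mathrm{out}}$ (connectedness), and your open-and-closed argument applies in whichever component that is, forcing $V_i$ to equal that entire component; boundedness of $V_i\subset D^n$ then rules out $C_{\mathrm{out}}$. With that rearrangement the argument is complete.
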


In particular, if $\mu$ is a $G$-action on a generalized cell decomposition $(K, \mathcal{U})$, then for every $g \in G$, $U \in \mathcal{U}$, there exists a unique $U' \in \mathcal{U}$ such that $\mu(g)(\partial U) = \partial U'$. We write
\[ \mu(g)(U) = U', \]
which defines an action of $G$ on $\mathcal{U}$.

\begin{proof}
    Let $V_1, V_2 \in \mathcal{U}$ such that $\partial V_1 = \partial V_2$. Since $V_1, V_2$ are connected components of $D^{n} \setminus K$, they are either disjoint or equal. Suppose they are disjoint. Since $V_1, V_2$ are open subsets of $D^{n}$ such that $(\overline{V_1}, \partial V_1)$ and $(\overline{V_2}, \partial V_2)$ are homeomorphic to $(D^{n}, S^{n-1})$, the union $\overline{V_1} \cup \overline{V_2}$ is homeomorphic to the closed manifold $D^{n} \coprod_{S^{n-1}} D^{n} \approx S^{n}$, where the left-hand-side denotes a glueing of two closed unit balls along a homeomorphism between their boundaries. Since $\overline{V_1} \cup \overline{V_2}$ is compact, it is an embedded copy of $S^n$ in $D^n$. However, closed $n$-dimensional manifolds cannot be embedded into $\mathbb{R}^{n}$ (see for example {\cite[Corollary 2B.4]{Hatcher02}}). We obtain a contradiction and conclude that $V_1 = V_2$.
\end{proof}

\begin{definition} \label{def:free.action}

    For $U \in \mathcal{U}$, we define its {\it stabilizer} to be
    \[ \Stab_{\mu}(U) := \{ g \in G \vert \mu(g)(\partial U) = \partial U \}. \]
\end{definition}

\begin{definition} \label{def:G-complex}
    Let $G$ be a group, $(K, \mathcal{U})$ a generalized cell decomposition of $D^{n}$, and $\mu$ a $G$-action on $(K, \mathcal{U})$. We call the triple $(\mu, K, \mathcal{U})$ a {\it $G$-complex}, if for every $U \in \mathcal{U}$ there exists a homeomorphism $\Phi^{U} : (D^{n}, S^{n-1}) \rightarrow (\overline{U}, \partial U)$ such that the following two conditions are satisfied:
    \begin{enumerate}
        \item $\Phi^{U}$ fixes every point in $S^{n-1} \cap \partial U$.

        \item The collection $\{ \Phi^{U} \}_{U \in \mathcal{U}}$ is $G$-equivariant on the boundary, that is for every $g \in G$,
        \[ \mu(g) \circ \Phi^{U} = \Phi^{\mu(g)(U)} \circ \mu(g) \quad \text{on $S^{n-1}$}. \]
    \end{enumerate}

    If we remember the choices of $\Phi^{U}$, we call the collection $(\mu, K, \mathcal{U}, \Phi^{\mathcal{U}} )$ a {\it marked $G$-complex}.
\end{definition}

\begin{definition} \label{def:free.and.convergence.G-complex}
Let $\mu : G \rightarrow \Homeo(K)$ be a $G$-action on $(K, \mathcal{U})$. 
\begin{itemize}
    \item We say that $\mu$ is {\it free} if for every $g \in G \setminus \{ 1 \}$, $\mu(g)$ has no fixed points in $K \cap \interior(D^{n})$.

    \item A $G$-complex $(\mu, K, \mathcal{U})$ is called {\it free} if and only if the $G$-action $\mu$ is free.

    \item We say $(\mu, K, \mathcal{U})$ is a {\it convergence $G$-complex} if and only if $\mu(G)$ acts as a convergence group on $K$.

\end{itemize}
    

\end{definition}

\begin{remark} \label{rem:convergence.fixed.points.are.in.boundary}
    Let $(g_m)_m$ be a sequence of distinct elements in $G$. If $(\mu, K, \mathcal{U})$ is a convergence $G$-complex, then by definition we find two points $a, b \in K$ and a subsequence $(g_{m_k})_k$ such that $(\mu(g_{m_k}))_k \rightarrow a$ uniformly on $K \setminus \{ b \}$ and $(\mu(g_{m_k}^{-1}))_k \rightarrow b$ uniformly on $K \setminus \{ a \}$. Since $\mu(G)$ preserves $S^{n-1} \subset K$, we conclude that $a, b \in S^{n-1}$. This holds for any sequence of distinct elements in $G$.
\end{remark}

\begin{remark} \label{rem:torsion.free.convergence.implies.free}
    If $G$ is torsion free, then any convergence $G$-complex is also free for the following reason: Suppose $(\mu, K, \mathcal{U})$ is a convergence $G$-complex and let $g \in G \setminus \{ 1 \}$. Since $G$ has no torsion, $(g^m)_m$ is a sequence of distinct elements in $G$ to which we can apply the convergence-property. We conclude that $\mu(g)$ cannot have any fixed points in $K \cap \interior(D^n)$.
    

\end{remark}

\begin{definition}
    Let $(\mu, K,\mathcal{U})$ be a $G$-complex with marking $\Phi^\mathcal{U}$ and let $f\colon D^n\rightarrow D^n$ be a homeomorphism. Define $f_*(\mu)(g) := f\circ\mu(g)\circ f^{-1}$ for all $g\in G$, and $f(\mathcal{U}) := \{f(U)\mid U\in \mathcal{U}\}$. The \emph{pushforward} of $(\mu, K,\mathcal{U})$ is
    \[f_*(\mu, K,\mathcal{U}) := (f_*(\mu),f(K),f(\mathcal{U}))\]
    with marking $f_*(\Phi^\mathcal{U}) := \{\Phi^{f(U)}=f\circ\Phi^U \circ f^{-1}\mid f(U)\in f(\mathcal{U}) \}$.
\end{definition}
That $f_*(\mu,K,\mathcal{U})$ satisfies (1) and (2) of Definition \ref{def:G-complex} is an exercise, noting that $f$ is actually a homeomorphism of pairs $(D^{n},\partial D^{n-1})\smash{\xrightarrow{\simeq}}(D^{n},\partial D^{n-1})$. Similarly, one can define a pullback of a $G$-complex, but these are completely interchangeable for our purposes.
\begin{proposition}
    Being a convergence $G$-complex or a free $G$-complex is preserved under pushforward. 
\end{proposition}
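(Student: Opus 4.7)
The plan is to unpack both definitions directly, using two basic observations: any homeomorphism $f\colon D^n \to D^n$ restricts to a homeomorphism of the boundary $S^{n-1}$, and hence also of $\interior(D^n)$; and since $K$ is compact, $f$ is uniformly continuous on $K$. These facts are what make the pushforward well-behaved.

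For freeness I would argue by contrapositive. Suppose there exist $g \in G \setminus \{1\}$ and $x \in f(K) \cap \interior(D^n)$ with $f_*(\mu)(g)(x) = x$, that is, $f(\mu(g)(f^{-1}(x))) = x$. Applying $f^{-1}$ gives $\mu(g)(f^{-1}(x)) = f^{-1}(x)$, and $f^{-1}(x) \in K \cap \interior(D^n)$ because $f$ preserves the interior. This contradicts freeness of $\mu$, so $f_*(\mu)$ is free as well.

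For the convergence property, let $(g_m)_m$ be a sequence of distinct elements in $G$. Applying the convergence hypothesis for $\mu$, I extract a subsequence $(g_{m_k})_k$ and points $a, b \in K$ with $\mu(g_{m_k}) \to a$ uniformly on compact subsets of $K \setminus \{b\}$, and $\mu(g_{m_k}^{-1}) \to b$ uniformly on compact subsets of $K \setminus \{a\}$. I claim that $f(a)$ and $f(b)$ play the analogous role for $f_*(\mu)$. Given a compact set $C \subset f(K) \setminus \{f(b)\}$, continuity of $f^{-1}$ makes $f^{-1}(C)$ a compact subset of $K \setminus \{b\}$, and the identity
\[ f_*(\mu)(g_{m_k})(y) \;=\; f\bigl(\mu(g_{m_k})(f^{-1}(y))\bigr) \]
combined with uniform continuity of $f$ on $K$ transfers the uniform convergence on $f^{-1}(C)$ to uniform convergence on $C$ toward $f(a)$. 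The symmetric argument handles the inverse sequence, giving $f_*(\mu)(g_{m_k}^{-1}) \to f(b)$ uniformly on compact subsets of $f(K) \setminus \{f(a)\}$.

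No step here is a serious obstacle; the whole proposition is a formal exercise relying only on the two observations above. The one prerequisite worth noting is that $f_*(\mu, K, \mathcal{U})$ is itself a $G$-complex, so that freeness and convergence are even defined for it; this is the exercise alluded to just before the proposition statement, and it reduces to the same point that $f$ sends $S^{n-1}$ to $S^{n-1}$ and respects the cell structure on the nose.
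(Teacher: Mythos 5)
Your proof is correct and takes the same approach as the paper, which merely states in one sentence that both conditions are preserved because $f$ is a homeomorphism of pairs $(D^n,S^{n-1})$; you have simply unpacked the details of that conjugation argument (the contrapositive for freeness, and the transfer of uniform convergence via uniform continuity of $f$ on the compact set $K$).
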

\begin{proof}
    Both of these are defined by properties of the $G$-action in the compact-open topology and the fixed points of elements of $G$ in the boundary $S^{n-1}$. They are both preserved by pushforward since $f$ is a homeomorphism of pairs $(D^{n},\partial D^{n-1})\xrightarrow{\simeq}(D^{n},\partial D^{n-1})$.
\end{proof}

\begin{proposition} \label{prop:quotient.of.maximal.G-action}
    Set $K_0 := D^{n}$ and let $(\mu, K_0, \emptyset)$ be a free convergence $G$-complex. Then the quotient $M := \faktor{\interior(D^{n})}{\mu(G)}$ is an aspherical $n$-dimensional manifold whose fundamental group is isomorphic to $G$.
\end{proposition}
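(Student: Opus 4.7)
The plan is to show that the action $\mu$ restricted to $\interior(D^n)$ is free and properly discontinuous, so that the quotient map $\interior(D^n)\to M$ is a normal covering map, after which asphericity and the identification of $\pi_1(M)$ are immediate from $\interior(D^n)\cong \R^n$ being contractible.

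First, I will establish \emph{proper discontinuity} of $\mu$ on $\interior(D^n)$. Suppose for contradiction that there is a compact $C\subset \interior(D^n)$ and an infinite sequence of distinct $g_m\in G$ with $\mu(g_m)(C)\cap C\neq \emptyset$; pick $x_m\in C$ with $y_m:=\mu(g_m)(x_m)\in C$. Since $(\mu,K_0,\emptyset)$ is a convergence $G$-complex, after passing to a subsequence we get points $a,b\in K_0$ and uniform convergence $\mu(g_m)\to a$ on compacta of $K_0\setminus\{b\}$. By Remark \ref{rem:convergence.fixed.points.are.in.boundary}, $a,b\in S^{n-1}$. Since $C\subset \interior(D^n)$ is compact and disjoint from $\{b\}\subset S^{n-1}$, the convergence is uniform on $C$, hence $y_m\to a\in S^{n-1}$. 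This contradicts $y_m\in C\subset \interior(D^n)$, since $C$ is a closed subset of $\interior(D^n)$ and thus bounded away from $S^{n-1}$.

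Next, freeness of $\mu$ on $\interior(D^n)$ is built into the hypothesis (Definition \ref{def:free.and.convergence.G-complex}). Combined with proper discontinuity on the manifold $\interior(D^n)\cong \R^n$, standard covering-space theory (the action being free, properly discontinuous, and by homeomorphisms of a Hausdorff manifold) implies that the projection
\[ \pi\colon \interior(D^n)\longrightarrow M=\faktor{\interior(D^n)}{\mu(G)} \]
is a normal covering map with deck group $\mu(G)\cong G$. In particular $M$ inherits the structure of an $n$-dimensional topological manifold.

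Finally, because $\interior(D^n)\cong \R^n$ is contractible, the covering $\pi$ is the universal cover of $M$, so $M$ is a $K(G,1)$; equivalently $M$ is aspherical with $\pi_1(M)\cong G$. The only genuine content here is the first step, turning the convergence property into proper discontinuity on the interior; the crucial input is Remark \ref{rem:convergence.fixed.points.are.in.boundary}, which forces the limit points of any sequence in $\mu(G)$ into $S^{n-1}$ and thus keeps them away from any compactum in $\interior(D^n)$. Everything else is routine application of covering-space theory.
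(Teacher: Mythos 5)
Your proof is correct and follows the same route as the paper: freeness on the interior is the definition, properness is deduced from the convergence property, and asphericity and the fundamental group then follow because $\interior(D^n)$ is contractible. The only difference is that you spell out the proper-discontinuity argument (using Remark \ref{rem:convergence.fixed.points.are.in.boundary} to push limit points into $S^{n-1}$) where the paper simply asserts that a convergence group on $D^n$ acts properly on the interior.
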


\begin{proof}
    Since $(\mu, K_0, \emptyset)$ is free, $\mu(g)$ has no fixed point on $K_0 \cap \interior(D^{n}) = \interior(D^{n})$ for every $g \in G \setminus \{ 1 \}$. In other words, the action of $\mu(G)$ on $\interior(D^{n})$ is free. Since $(\mu, K_0, \emptyset)$ is a convergence $G$-complex, $\mu(G)$ is a convergence group on $D^{n}$ and thus its action on $\interior(D^{n})$ is proper (w.\,r.\,t.\,the euclidean metric). Therefore, the quotient $\faktor{ \interior(D^{n}) }{\mu(G)}$ is an $n$-dimensional manifold. Since $\interior(D^{n})$ is contractible, this quotient is aspherical and its fundamental group is isomorphic to $G$.
\end{proof}

We see that Theorems \ref{thm:S3-cubulated} and \ref{thm:Sn-cubulated} follow if we are able to construct a free convergence $G$-complex $(\mu, D^{n}, \emptyset)$ from the assumptions made about $G$ in these theorems.

\subsection{Refinements of $G$-complexes} \label{subsec:refinements.of.G-complexes}

\begin{definition} \label{def:refinement.of.complex}
    Let $(\mu, K, \mathcal{U})$ and $(\mu', K', \mathcal{U}')$ be two $G$-complexes on $D^n$. We say $(\mu, K, \mathcal{U})$ is a {\it refinement} of $(\mu', K', \mathcal{U}')$ if $K' \subset K$ and $\mu(g) = \mu'(g)$ on $K'$.
\end{definition}

The notion of refinement defines a partial order on the collection of all $G$-complexes. Note that if $(\mu, K, \mathcal{U})$ is a refinement of $(\mu', K', \mathcal{U}')$, then $(K, \mathcal{U})$ is a refinement of $(K', \mathcal{U}')$. 

The next two lemmas are key technical results concerning refinements of $G$-complexes, but we feel their proofs are lengthy and not essential for understanding the remainder of the paper. In order to avoid interrupting the flow of this section, we have relegated their proofs to Appendix \ref{sec:Appendix}.  The first concerns the relationship between stabilizers under refinement, and is stated without proof in \cite[p. 1047]{Markovic13}:

\begin{lemma} \label{lem:refinements.and.stabilizers}
    Let $(\mu, K, \mathcal{U})$ be a refinement of $(\mu', K', \mathcal{U}')$ and suppose $U \subset U'$, where $U \in \mathcal{U}$ and $U' \in \mathcal{U}'$. Then $\Stab_{\mu}(U) < \Stab_{\mu'}(U')$.
\end{lemma}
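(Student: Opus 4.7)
The plan is to show that the natural map $\pi \colon \mathcal{U} \to \mathcal{U}'$ sending each cell $W \in \mathcal{U}$ to the unique $W' \in \mathcal{U}'$ containing $W$ is $G$-equivariant, in the sense that $\pi \circ \mu(g) = \mu'(g) \circ \pi$ (noting that $\pi$ is well-defined because $K' \subset K$, so each cell of $\mathcal{U}$ is a connected subset of some unique component of $D^{n} \setminus K'$). Granted this, applying the equivariance to $U$ yields $\mu'(g)(U') = \mu'(g)(\pi(U)) = \pi(\mu(g)(U)) = \pi(U) = U'$, since $g \in \Stab_\mu(U)$ means $\mu(g)(U) = U$ as cells. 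Hence $g \in \Stab_{\mu'}(U')$, as desired.

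Set $V' := \mu'(g)(U')$. Two preliminary observations follow from the refinement structure. First, $U \subset U'$ gives $\partial U \subset \overline{U'}$, so $\partial U \cap K' \subset \overline{U'} \cap K' = \partial U'$. Second, since $\mu(g)$ preserves $\partial U$ and $K'$ setwise and $\mu(g)|_{K'} = \mu'(g)$, the set $\partial U \cap K'$ is $\mu(g)$-invariant and is also contained in $\mu'(g)(\partial U') = \partial V'$. Combining, $\partial U \cap K' \subset \partial U' \cap \partial V'$.

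I would then split into two cases. If $\partial U \cap K' \neq \emptyset$, the inclusion $\partial U \cap K' \subset \partial U' \cap \partial V'$, combined with the facts that $\partial U'$ is a topological $(n-1)$-sphere bounding $U'$ on one local side and that $\mu(g)$ is a local homeomorphism of $K$ respecting this side structure, forces $V' = U'$: otherwise, a neighborhood of a point of $\partial U \cap K'$ in $K$ would simultaneously have to lie on the $U'$-side of $\partial U'$ (because $\partial U$ faces into $U'$) and on the $V'$-side of $\partial V'$ (because $\mu(g)$ intertwines these local structures), contradicting $U' \cap V' = \emptyset$. If instead $\partial U \cap K' = \emptyset$, then $\partial U \subset K \setminus K'$, and being connected (as $n \geq 3$) lies in a single connected component $C$ of $K \setminus K'$ with $C \subset U'$. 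The image $\mu(g)(C)$ is then a component of $K \setminus K'$ containing $\mu(g)(\partial U) = \partial U$, so $\mu(g)(C) = C$ by uniqueness of components. The condition that the cells of $\mathcal{U}$ inside $U'$ are homeomorphic to $\interior(D^{n})$ forces $\overline{C} \cap \partial U' \neq \emptyset$ (a ``floating'' closed $C$ would leave at least one non-ball cell in its complement within $\overline{U'}$), and applying the same side-preservation argument at a point of $\overline{C} \cap \partial U'$ yields $V' = U'$.

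The main technical obstacle is the side-preservation step common to both cases: because $\mu(g)$ is a homeomorphism of $K$ rather than of the ambient $D^{n}$, establishing that it respects the local ``inside'' versus ``outside'' of $\partial U'$ requires careful use of the topology of cells of $\mathcal{U}'$ as connected components of $D^{n} \setminus K'$ bounded by embedded topological $(n-1)$-spheres, together with the compatibility $\mu(g)|_{K'} = \mu'(g)$. This is the step most sensitive to the precise formulation of the generalized cell decomposition and is where the bulk of the proof's care must be invested.
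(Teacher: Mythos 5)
Your proposal sets up the right goal (showing $\mu'(g)(U') = U'$ whenever $\mu(g)(U) = U$), and the two preliminary observations are correct, but the proof is incomplete at precisely the step you flag as "the main technical obstacle": the side-preservation argument. You acknowledge that this step requires "careful use of the topology" and is "where the bulk of the proof's care must be invested," but you do not actually carry it out, and it is not a routine detail --- it is the entire content of the lemma. The difficulty is that $\mu(g)$ is a homeomorphism of $K$ only, not of the ambient $D^n$, so "lying on the $U'$-side of $\partial U'$" is not a notion that $\mu(g)$ obviously respects. Near a point $x \in \partial U \cap K'$, the set $K$ is some complicated closed set, and while $\mu(g)$ preserves $K'$ and $\partial U$, it is not clear how $\mu(g)$ relates the components of $K \setminus K'$ touching $x$ to their position relative to $\partial U'$ versus $\partial V'$. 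In fact the inclusion $\partial U \cap K' \subset \partial U' \cap \partial V'$ that you derive is consistent with $U' \neq V'$ (two distinct cells can share boundary points), so that observation alone does not close the argument, and no further rigorous step is given. Case~2 has a similar issue: even granting that $\overline{C} \cap \partial U' \neq \emptyset$, you are deferred back to the same unproven side-preservation claim, and the justification that a "floating" $C$ leaves a non-ball cell in its complement is itself only sketched.

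The paper's proof takes a different and genuinely more effective route. Rather than trying to establish a local side-preservation principle, it argues by contradiction: assuming $g \in \Stab_\mu(U)$ but $g \notin \Stab_{\mu'}(U')$, it picks a point $p_\infty \in \partial U'$ with $\mu(g)(p_\infty) \in \partial V' \setminus \partial U'$, joins a point of $U$ to $p_\infty$ by a path $\gamma$ meeting $\partial U'$ only at its endpoint, and decomposes $\gamma$ into alternating open segments in cells $U_0, U_1, \ldots$ of $\mathcal{U}$ and closed segments $S_0, S_1, \ldots$ in $K \cap U'$. An induction then shows that $\mu(g)$ carries each piece into $U'$ (using only that $\mu(g)(K') = K'$ and that each $U_i$ lies in a unique cell of $\mathcal{U}'$), which forces $\mu(g)(p_\infty) \in \overline{U'}$ and yields a contradiction. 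This global, path-based induction is exactly what replaces the local side analysis you are gesturing at, and it sidesteps the ambiguity about what "$U'$-side" means for a homeomorphism defined only on $K$. Your proposal would need an argument of comparable substance to fill the gap.
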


The second key result concerns the behavior of convergence $G$-complexes under refinement. The proof of this lemma is very similar to the proof in the $D^3$-case, but due to its technical nature we have reproduced it in full to make sure it does not rely on dimension 3.  


\begin{lemma} \label{lem:inheritance.of.convergence.property}
    Let $(\mu, K, \mathcal{U})$ be a $G$-complex which is a refinement of a convergence $G$-complex $(\mu', K', \mathcal{U}')$. 
    If for every $U' \in \mathcal{U}'$, the group $\mu(\Stab_{\mu'}(U'))$ is a convergence group on $\overline{U'} \cap K$, then $(\mu, K, \mathcal{U})$ is a convergence $G$-complex as well.
\end{lemma}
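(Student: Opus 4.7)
The plan is to take an arbitrary sequence of distinct elements $(g_m) \subset G$, pass to a subsequence using the convergence property of $\mu'$ on $K'$, and then refine this subsequence further so that convergence extends to all of $K$. Apply convergence of $\mu'$ first to produce points $a, b \in S^{n-1}$ (both lie in $S^{n-1}$ by Remark \ref{rem:convergence.fixed.points.are.in.boundary}) such that $\mu'(g_m) \to a$ uniformly on compacta of $K' \setminus \{b\}$ and $\mu'(g_m^{-1}) \to b$ uniformly on compacta of $K' \setminus \{a\}$. Since $\mu = \mu'$ on $K'$, this convergence transfers to $\mu$ on $K'$; the task is to control the contribution of the extra points in $K \setminus K'$, which lie inside the closures $\overline{U'} \cap K$ for $U' \in \mathcal{U}'$.

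Since $D^n$ is second countable, the family $\mathcal{U}'$ is countable; enumerate it as $\{U'_j\}_{j\geq 1}$. For each $j$, the orbit $\{\mu'(g_m)(U'_j)\}_m \subset \mathcal{U}'$ either takes infinitely many distinct values (Case A) or only finitely many (Case B). By a standard diagonal extraction, pass to a single subsequence so that for every $j$ one of these cases holds and, in Case A, all images $\mu'(g_m)(U'_j)$ are distinct, while in Case B, $\mu'(g_m)(U'_j) = V'_j$ is constant. In Case A, the third property of a generalized cell decomposition (equivalently condition $(\star)$ of Remark \ref{rem:condition.star}) forces $\lim_m \mu'(g_m)(U'_j)$ to be a single point; since $\partial U'_j \subset K'$ and $\mu'(g_m) \to a$ uniformly on compacta of $K' \setminus \{b\}$, this point equals $a$ provided $b \notin \partial U'_j$, and the $G$-equivariance of the markings $\Phi^{U'}$ on boundaries (Definition \ref{def:G-complex}(2)) propagates this from the boundary to all of $\overline{U'_j} \cap K$. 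In Case B, set $h_m := g_1^{-1} g_m$; then (using the paper's convention $\mu(gh) = \mu(h) \circ \mu(g)$ and Lemma \ref{lem:action.on.cells.well-defined}) $h_m \in \Stab_{\mu'}(V'_j)$, and the hypothesis gives a further subsequence along which $\mu(h_m)$ converges uniformly on compacta of $\overline{V'_j} \cap K$ minus a point; precomposing with $\mu(g_1)$ transfers this convergence to $\overline{U'_j} \cap K$, and matching limits along $\partial V'_j \subset K'$ (where $\mu = \mu'$) identifies the attractor and repeller with $a$ and $b$.

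The main obstacle is passing from convergence on each individual cell to uniform convergence on an arbitrary compact set $C \subset K \setminus \{b\}$, since $C$ may meet infinitely many cells of $\mathcal{U}'$. The key tool here is condition $(\star)$: given $\epsilon > 0$, only finitely many cells of $\mathcal{U}'$ have diameter exceeding $\epsilon$. Split $C$ into the part lying in the (finitely many) ``large'' cells of diameter $> \delta$, where the cell-by-cell convergence from the previous paragraph applies directly, and the residual part lying inside cells of diameter $\leq \delta$. For the residual part, one must verify that the images under $\mu(g_m)$ of small cells are themselves small; in Case A this follows because $\mu'(g_m)$ sends the cell to another cell, whose diameter is controlled by $(\star)$ once $m$ is large (only finitely many image cells are large), while in Case B the cell is sent to a fixed $V'_j$ and the convergence-group action of $\Stab_{\mu'}(V'_j)$ uniformly shrinks images away from the repeller. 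A final consistency check shows that all the cell-wise limits agree with the global $a$ and $b$, by using continuity and the fact that any cell $U'$ with $b \in \overline{U'}$ must itself shrink towards $b$ along the subsequence (by examining $\mu'(g_m^{-1})$ and $(\star)$). With these ingredients assembled, $\mu(g_m) \to a$ uniformly on $C$, proving $(\mu, K, \mathcal{U})$ is a convergence $G$-complex.
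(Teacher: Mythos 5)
Your proposal is structurally quite different from the paper's proof and, as written, has a genuine uniformity gap in the "Case A" branch.

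To see the problem concretely: after your diagonal extraction, fix $\epsilon > 0$ and a compact $C \subset K \setminus \{b\}$, and consider the residual part of $C$ lying in small cells (diameter $\leq \delta$). You claim that in Case A the image $\mu'(g_m)(U'_j)$ is small "once $m$ is large, since only finitely many image cells are large." That statement is true cell-by-cell: since the images of $U'_j$ are pairwise distinct and $(\star)$ gives only $L$ cells of diameter $\geq \epsilon/2$, at most $L$ values of $m$ can send $U'_j$ onto a large cell. But the threshold $m_j$ beyond which $\mu'(g_m)(U'_j)$ is small depends on $j$, and a compact $C$ may intersect \emph{infinitely many} small cells $U'_j$, each in Case A. Nothing in your argument forces $\sup_j m_j < \infty$. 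Indeed, for each fixed $m$ the $L$ preimages $\mu'(g_m^{-1})(W_1), \dots, \mu'(g_m^{-1})(W_L)$ are cells whose images are large, and \emph{which} cells these are can change with $m$; a priori they could sweep through infinitely many of the small cells meeting $C$ as $m \to \infty$, defeating uniform convergence on $C$.

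What is missing is exactly the paper's Step 3: one uses the \emph{backward} contraction $\mu(g_m^{-1}) \to b$ on compacta of $K' \setminus \{a\}$ to show that for $m$ large, any cell $V$ with $\mu'(g_m)(V) = W_i$ and $\operatorname{diam}(V) < \delta/2$ must be within distance $\delta$ of $b$ (the point $\mu(g_m^{-1})(p_i)$, with $p_i \in \partial W_i$ away from $a$, lands in $V$ and is $\delta/2$-close to $b$, and $V$ has diameter $< \delta/2$). Hence such a small cell cannot meet $C$ at all. This is what converts the cell-by-cell statement into a uniform one: the only cells meeting $C$ whose images can be large for $m$ large are the finitely many cells $V_1, \dots, V_{M(\alpha)}$ of diameter $\geq \delta/2$, and those are handled via the stabilizer hypothesis exactly as in your Case B.

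A secondary remark: your overall organization — extracting a single subsequence along which each cell's image sequence is either injective or constant — is cleaner conceptually than the paper's partition of the original sequence into $L+1$ pieces per cell, and your Case B matches the paper's treatment of the $g_m^i$. But the diagonal extraction does not by itself supply the missing uniformity; you would still need the backward-contraction argument above, and also a second diagonal extraction to handle the (finitely many, as it turns out) Case-B cells with large constant image, since the stabilizer hypothesis only provides a subsequence. With those additions your route would close up, but as stated the proof is incomplete.
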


\subsection{Refining one $G$-complex by another} \label{subsec:refining.one.G-complex.by.another}

Consider two marked $G$-complexes $(\mu_1, K_1, \mathcal{U}_1, \Phi_1^{\mathcal{U}_1} )$ and $(\mu_2, K_2, \mathcal{U}_2, \Phi_2^{\mathcal{U}_2} )$ that agree on the boundary, i.e. suppose that for all $g \in G$,
\[ \mu_1(g)\vert_{S^{n-1}} = \mu_2(g)\vert_{S^{n-1}}. \]
Markovic \cite{Markovic13} constructs the refinement of $(\mu_1, K_1, \mathcal{U}_1, \Phi_1^{\mathcal{U}_1} )$ induced by $(\mu_2, K_2, \mathcal{U}_2, \Phi_2^{\mathcal{U}_2} )$ in the $D^3$-case. The construction works the same way in the $D^{n}$-case and it goes as follows. We define
\[ \mathcal{U} := \{ \Phi_1^{V}(W) \vert V \in \mathcal{U}_1, W \in \mathcal{U}_2 \}, \]

\[ K := K_1 \cup \bigcup_{V \in \mathcal{U}_1} \Phi_1^{V}(K_2). \]
For every $U = \Phi_1^{V}(W) \in \mathcal{U}$, the map $\Phi^{U} := \Phi_1^V \circ \Phi_2^{W} : (D^{n}, S^{n-1}) \rightarrow (\overline{U}, \partial U)$ is a homeomorphism and we define $\Phi^{U}$ as the marking for $U \in \mathcal{U}$. One easily checks that $K$ is the complement of the union of all elements in $\mathcal{U}$ and thus we obtain that $(K, \mathcal{U})$ satisfies the first two conditions of a generalized cell decomposition of $D^{n}$ (see figure \ref{fig:refinement}).

\begin{figure}
    \centering
    \def\svgwidth{5in}
\begingroup%
  \makeatletter%
  \providecommand\color[2][]{%
    \errmessage{(Inkscape) Color is used for the text in Inkscape, but the package 'color.sty' is not loaded}%
    \renewcommand\color[2][]{}%
  }%
  \providecommand\transparent[1]{%
    \errmessage{(Inkscape) Transparency is used (non-zero) for the text in Inkscape, but the package 'transparent.sty' is not loaded}%
    \renewcommand\transparent[1]{}%
  }%
  \providecommand\rotatebox[2]{#2}%
  \newcommand*\fsize{\dimexpr\f@size pt\relax}%
  \newcommand*\lineheight[1]{\fontsize{\fsize}{#1\fsize}\selectfont}%
  \ifx\svgwidth\undefined%
    \setlength{\unitlength}{566.43695572bp}%
    \ifx\svgscale\undefined%
      \relax%
    \else%
      \setlength{\unitlength}{\unitlength * \real{\svgscale}}%
    \fi%
  \else%
    \setlength{\unitlength}{\svgwidth}%
  \fi%
  \global\let\svgwidth\undefined%
  \global\let\svgscale\undefined%
  \makeatother%
  \begin{picture}(1,0.43853138)%
    \lineheight{1}%
    \setlength\tabcolsep{0pt}%
    \put(0,0){\includegraphics[width=\unitlength,page=1]{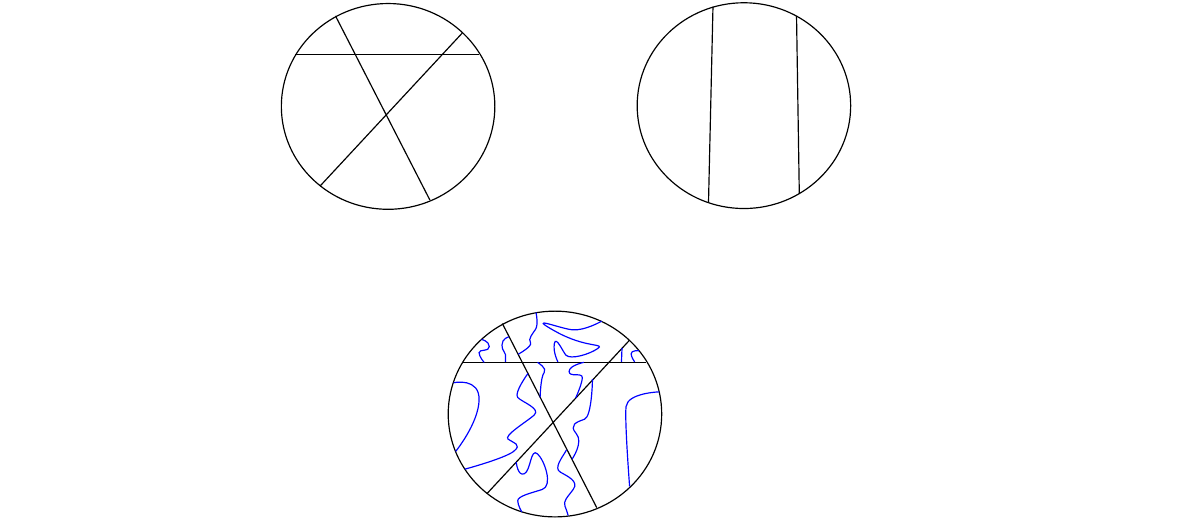}}%
    \put(0.39209356,0.42444551){\color[rgb]{0,0,0}\makebox(0,0)[lt]{\lineheight{1.25}\smash{\begin{tabular}[t]{l}$(\mu_1, K_1, \mathcal{U}_1)$\end{tabular}}}}%
    \put(0.70715399,0.42005748){\color[rgb]{0,0,0}\makebox(0,0)[lt]{\lineheight{1.25}\smash{\begin{tabular}[t]{l}$(\mu_2, K_2, \mathcal{U}_2)$\end{tabular}}}}%
    \put(0.56059388,0.15238745){\color[rgb]{0,0,0}\makebox(0,0)[lt]{\lineheight{1.25}\smash{\begin{tabular}[t]{l}$(\mu, K, \mathcal{U})$\end{tabular}}}}%
    \put(0,0){\includegraphics[width=\unitlength,page=2]{Refinement.pdf}}%
  \end{picture}%
\endgroup%

    \caption{This figure shows the refinement of $(\mu_1, K_1, \mathcal{U}_1)$ induced by $(\mu_2, K_2, \mathcal{U}_2)$. The blue lines in the bottom figure are the parts of the refinement induced by the second refinement.}
    \label{fig:refinement}
\end{figure}

To conclude that $(K, \mathcal{U})$ is a generalized cell decomposition, we are left to show that every sequence of distinct elements $(U_m)_m$ in $\mathcal{U}$ has a subsequence that converges to a point. Consider a sequence of distinct elements $(U_m)_m$ and suppose $U_m = \Phi_1^{V_m}(W_m)$ for $V_m \in \mathcal{U}_1$, $W_m \in \mathcal{U}_2$. There are two cases to consider.

If the sequence $(V_m)_m$ contains infinitely many distinct elements, we may choose a subsequence $(U_{m_k})_k$ such that all $V_{m_k}$ are pairwise distinct. Since $(K_1, \mathcal{U}_1)$ is a generalized cell decomposition, $(V_{m_k})_k$ admits a subsequence (which we again denote by $(V_{m_k})_k$ again) that converges to a point. Since $U_{m_k} \subset V_{m_k}$, the sequence $(U_{m_k})_k$ converges to that same point.

Now suppose the sequence $(V_m)_m$ contains only finitely many distinct elements. Then there exists some $V_M$ that $U_m \subset V_M$ for infinitely many $m$. Consider the subsequence $(U_{m_k})_k$ of all elements $U_{m_k} \subset V_M$. Since all $U_{m_k}$ are pairwise distinct, we obtain a sequence $( (\Phi_1^{V_M})^{-1}(U_{m_k}))_k$ of pairwise distinct elements of $\mathcal{U}_2$. Since $(K_2, \mathcal{U}_2)$ is a generalized cell decomposition, this sequence contains a subsequence (which we again denote by $( (\Phi_1^{V_M})^{-1}(U_{m_k}))_k$) that converges to a point $p$. Since convergence to a point is preserved under homeomorphisms, we conclude that $(U_{m_k})_k$ converges to the point $\Phi_1^{V_M}(p)$. Thus we see that $(K, \mathcal{U})$ is a generalized cell decomposition of $D^n$.

\begin{remark}
    When the refinement of one $G$-complex by another is introduced in \cite{Markovic13},  the fact that the refinement still satisfies property $(\star)$ is not addressed. 
    Since we are unable to prove directly that the refinement satisfies $(\star)$, we use an equivalent topological formulation of $(\star)$ that we can work with: the third property in our definition of generalized cell decompositions.
\end{remark}

We are left to define the $G$-action on $(K, \mathcal{U})$ and show that it satisfies all the properties of a $G$-complex. Let $g \in G$. On the subset $K_1 \subset K$, we set $\mu(g) = \mu_1(g)$. The remainder of $K$ can be written as the union $\coprod_{V \in \mathcal{U}_1} K \cap V$ and we define $\mu(g)$ on each of these sets individually. Let $V \in \mathcal{U}$. On $K \cap \overline{V}$, we define
\[ \mu(g) := \Phi_1^{\mu_1(g)(V)} \circ \mu_2(g) \circ \left( \Phi_1^{V} \right)^{-1}. \]
We need to show that $\mu(g)$ is well-defined and a homeomorphism. Indeed, on the set $K \cap \partial V = \partial V \subset K_1$, we have overlapping definitions of $\mu(g)$ and we need to check the following equality.
\[ \forall x \in \partial V : \mu_1(g) = \Phi_1^{\mu_1(g)(V)} \circ \mu_2(g) \circ \left( \Phi_1^{V} \right)^{-1}. \]
This is equivalent to the equation
\[ \forall x \in S^{n-1} : \mu_1(g) \circ \Phi_1^{V} = \Phi_1^{\mu_1(g)(V)} \circ \mu_2(g). \]
By assumption, $\mu_1(g)$ and $\mu_2(g)$ coincide on $S^{n-1}$ and the marking $\Phi_1^{\mathcal{U}_1}$ commutes with the action $\mu_1$ on $S^{n-1}$. Therefore, this equality holds and we conclude that $\mu(g)$ is well-defined.

Next, we show that $\mu(g)$ is a homeomorphism on $K$. By construction, $\mu(g)$ is defined as a continuous map on various closed subsets of $K$ that cover $K$. Thus, $\mu(g)$ is continuous. Furthermore, we know that $\mu(g)$ sends $K_1$ to itself and $K \cap V$ bijectively to $K \cap \mu_1(g)(V)$. Since all $K \cap V$ are pairwise disjoint and $\mu_1(g)$ acts bijectively on $\mathcal{U}_1$, we see that $\mu(g)$ is bijective. Since $K$ is compact and Hausdorff, $\mu(g)$ is a homeomorphism.

We are left to show that the map $\mu : G \rightarrow \Homeo(K)$ is a homomorphism. Recall that we are using the convention that actions by homeomorphisms satisfy the formula $\mu(gh) = \mu(h) \circ \mu(g)$. Let $g, h \in G$. On $K_1$, we simply have
\[ \mu(gh) = \mu_1(gh) = \mu_1(h) \circ \mu_1(g) = \mu(h) \circ \mu(g) \]
For $V \in \mathcal{U}_1$, we compute
\begin{equation*}
    \begin{split}
        \mu(gh) & = \Phi_1^{\mu_1(gh)(V)} \circ \mu_2(gh) \circ \left( \Phi_1^{V} \right)\\
        & = \Phi_1^{\mu_1(h)(\mu_1(g)(V))} \circ \mu_2(h) \circ \mu_2(g) \circ \left( \Phi_1^{V} \right)^{-1}\\
        & = \Phi_1^{\mu_1(h)(\mu_1(g)(V))} \circ \mu_2(h) \circ \left( \Phi_1^{\mu_1(g)(V)} \right)^{-1} \circ \Phi_1^{\mu_1(g)(V)} \circ \mu_2(g) \circ \left( \Phi_1^{V} \right)^{-1}\\
        & = \mu(h) \circ \mu(g).
    \end{split}
\end{equation*}
We conclude that $\mu$ is a $G$-action, turning $(\mu, K, \mathcal{U}, \Phi^{\mathcal{U}})$ into a marked $G$-complex. One immediately sees that $(\mu, K, \mathcal{U})$ is a refinement of $(\mu_1, K_1, \mathcal{U}_1)$.\\

\begin{remark}
    Let $(\mu_i, K_i, \mathcal{U}_i, \Phi_i^{\mathcal{U}_i})$ for $i = 1, 2, 3$ be three marked $G$-complexes. For short, we simply denote them $\mathcal{K}_1, \mathcal{K}_2, \mathcal{K}_3$. Let $\mathcal{K}_{i, j}$ denote the refinement of $\mathcal{K}_i$ induced by $\mathcal{K}_j$. It is an easy exercise to show that the refinement of $\mathcal{K}_{1,2}$ induced by $\mathcal{K}_3$ is the same as the refinement of $\mathcal{K}_1$ induced by $\mathcal{K}_{2,3}$. In other words, the refinement of $G$-complexes is associative. We never use this, but think it is worth pointing out.
\end{remark}

The following proposition summarizes and generalizes Propositions 3.2, 3.3, and 3.4 from \cite{Markovic13} to the extent that we need them. Their proofs are identical to the $D^3$-case, short, and do not use the topology or geometry of $D^{n}$ at all, which is why we give minimal comment and refer to the proofs given by Markovic.

\begin{proposition} \label{prop:Markovic}
    Let $(\mu_i, K_i, \mathcal{U}_i)$ for $i \in \{ 1, 2 \}$ be two $G$-complexes on $D^n$ and let $(\mu, K, \mathcal{U})$ be the refinement of $(\mu_1, K_1, \mathcal{U}_1)$ induced by $(\mu_2, K_2, \mathcal{U}_2)$. Then the following three statements hold:
        \begin{enumerate}
        \item If $(\mu_1, K_1, \mathcal{U}_1)$ 
        and  $(\mu_2, K_2, \mathcal{U}_2)$ are convergence $G$-complexes, then $(\mu, K, \mathcal{U})$ is a convergence $G$-complex.

        \item If $(\mu_1, K_1, \mathcal{U}_1)$ and $(\mu_2, K_2, \mathcal{U}_2)$ are free $G$-complexes, then $(\mu, K, \mathcal{U})$ is a free $G$-complex.

        \item For every $U \in \mathcal{U}$, there exist $U_1 \in \mathcal{U}_1$, $U_2 \in \mathcal{U}_2$ such that
        \[ \Stab_{\mu}(U) = \Stab_{\mu_1}(U_1) \cap \Stab_{\mu_2}(U_2). \]
    \end{enumerate}


\end{proposition}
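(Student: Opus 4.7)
The plan is to handle the three statements in reverse order of independence, relying on the explicit formula for $\mu$ built in Section \ref{subsec:refining.one.G-complex.by.another}, namely that $\mu(g) = \mu_1(g)$ on $K_1$, while on $\overline{V} \cap K$ for $V \in \mathcal{U}_1$,
\[ \mu(g) = \Phi_1^{\mu_1(g)(V)} \circ \mu_2(g) \circ \bigl(\Phi_1^{V}\bigr)^{-1}. \]
In particular, when $g \in \Stab_{\mu_1}(V)$, the map $\mu(g)$ restricted to $\overline{V} \cap K$ is conjugate via $\Phi_1^V$ to $\mu_2(g)$ on $K_2$.

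For (3), given $U \in \mathcal{U}$, pick the unique $U_1 \in \mathcal{U}_1$ with $U \subset U_1$ and set $U_2 := (\Phi_1^{U_1})^{-1}(U) \in \mathcal{U}_2$. Lemma \ref{lem:refinements.and.stabilizers} forces any $g \in \Stab_{\mu}(U)$ to lie in $\Stab_{\mu_1}(U_1)$; applying the conjugation formula above to $\mu(g)(U)=U$ then yields $\mu_2(g)(U_2)=U_2$, so $g \in \Stab_{\mu_2}(U_2)$. The reverse inclusion is immediate from the same formula.

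For (2), let $g \neq 1$ and suppose $x \in K \cap \interior(D^n)$ is fixed. If $x \in K_1$ then $\mu(g)(x)=\mu_1(g)(x)=x$, contradicting freeness of $\mu_1$. Otherwise $x \in \Phi_1^V(K_2) \setminus \partial V$ for some $V \in \mathcal{U}_1$. If $\mu_1(g)(V) \neq V$, then $\mu(g)(x) \in \overline{\mu_1(g)(V)} \cap \overline{V}$ lies in $K_1$ and hence cannot equal $x$; if $\mu_1(g)(V) = V$, the conjugation formula together with injectivity of $\Phi_1^V$ reduces the existence of a fixed point in $\Phi_1^V(K_2 \cap \interior(D^n))$ to a fixed point of $\mu_2(g)$ in $K_2 \cap \interior(D^n)$, contradicting freeness of $\mu_2$.

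For (1), I want to apply Lemma \ref{lem:inheritance.of.convergence.property} with the role of $(\mu', K', \mathcal{U}')$ played by $(\mu_1, K_1, \mathcal{U}_1)$. The hypothesis of the lemma requires that for each $V \in \mathcal{U}_1$ the group $\mu(\Stab_{\mu_1}(V))$ acts as a convergence group on $\overline{V} \cap K$. A short computation (using that $K_2 \ni S^{n-1}$ and that distinct cells are disjoint, so only $\Phi_1^V(K_2)$ contributes to $\overline{V}\cap K$) identifies $\Phi_1^V : K_2 \to \overline{V}\cap K$ as a homeomorphism that conjugates the restriction of $\mu(\Stab_{\mu_1}(V))$ to the restriction of $\mu_2(\Stab_{\mu_1}(V))$ on $K_2$. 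Since $\mu_2(G)$ is a convergence group on $K_2$, so is any subgroup, and the convergence property is invariant under conjugation by a homeomorphism; this verifies the hypothesis and yields (1). The most delicate point, which I would flag as the main technical check, is the identification $\overline{V}\cap K = \Phi_1^V(K_2)$, since it is precisely what allows the conjugation formula to transport the convergence behaviour from $K_2$ into the refinement.
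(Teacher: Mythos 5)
Your proof is correct and fills in exactly what the paper leaves implicit: it invokes Lemma \ref{lem:refinements.and.stabilizers} and Lemma \ref{lem:inheritance.of.convergence.property} (the two key lemmas proved in the Appendix) together with the explicit conjugation formula for $\mu$ on each $\overline{V}\cap K$, which is the approach the paper attributes to Markovic and declines to spell out. The identification $\overline{V}\cap K = \Phi_1^V(K_2)$ that you flag as the delicate point does hold — for $W\neq V$, disjointness of $V$ and $W$ forces $\overline{V}\cap\Phi_1^W(K_2)\subset\partial V\cap\partial W\subset\partial V\subset\Phi_1^V(K_2)$ — so the conjugation argument goes through and (1), (2), (3) are all established.
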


We need one final proposition, which generalizes to $D^{n}$ due to the fact that radial extensions exist in higher dimensions as well.

\begin{proposition} \label{prop:radial.extension}
    Let $(\mu, K, \mathcal{U})$ be a free convergence $G$-complex on $D^n$ 
    such that $\Stab_{\mu}(U)$ is trivial for every $U \in \mathcal{U}$. Let $K_0 := D^{n}$. Then there exists a free convergence $G$-complex $(\mu_0, K_0, \emptyset)$ that is a refinement of the $G$-complex $(\mu, K, \mathcal{U})$.

    That is, the action of the convergence group $\mu(G)$ on $S^{n-1}$ can be extended to a free convergence action on $D^{n}$.
\end{proposition}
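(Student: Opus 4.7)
The plan is to fill in each cell $U\in\mathcal U$ via the radial extension of $\mu(g)\vert_{S^{n-1}}$, conjugated by the markings. For any $h\in\Homeo(S^{n-1})$, write $\widetilde h\colon D^n\to D^n$ for its \emph{radial extension}, $\widetilde h(rx):=r\cdot h(x)$ for $r\in[0,1]$ and $x\in S^{n-1}$, with $\widetilde h(0):=0$; this is a homeomorphism, and an immediate calculation (using $\|rx\|=r$) gives $\widetilde{h_1\circ h_2}=\widetilde{h_1}\circ\widetilde{h_2}$, so $\widetilde{\,\cdot\,}$ is a homomorphism $\Homeo(S^{n-1})\to\Homeo(D^n)$. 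Define $\mu_0(g):=\mu(g)$ on $K$ and, for each $U\in\mathcal U$,
\[
\mu_0(g)\vert_{\overline U}\,:=\,\Phi^{\mu(g)(U)}\circ\widetilde{\mu(g)\vert_{S^{n-1}}}\circ(\Phi^U)^{-1}.
\]
Condition (2) of Definition~\ref{def:G-complex} — that $\mu(g)\circ\Phi^U=\Phi^{\mu(g)(U)}\circ\mu(g)$ on $S^{n-1}$ — guarantees that the two definitions agree on $\partial U = K\cap\overline U$, so $\mu_0(g)$ is a well-defined map on $D^n=K\cup\bigcup_{U\in\mathcal U}\overline U$.

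Checking that each $\mu_0(g)$ is a homeomorphism and that $\mu_0\colon G\to\Homeo(D^n)$ is a group homomorphism is mostly mechanical. Continuity at a point $p\in K$ needs the third property of generalized cell decompositions: if $p_m\to p$ with $p_m\in\overline{U_m}$ for pairwise distinct $U_m\in\mathcal U$, then $(\star)$ forces $\diam(\overline{U_m})\to 0$, hence $\overline{U_m}\to\{p\}$; the distinct cells $\mu(g)(U_m)$ then admit a subsequence converging to some $q$, and taking any $q_m\in\partial U_m\subset K$ with $q_m\to p$ and applying continuity of $\mu(g)$ to $q_m$ identifies $q=\mu(g)(p)$, so $\mu_0(g)(p_m)\in\overline{\mu(g)(U_m)}\to\mu(g)(p)$. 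The homomorphism property $\mu_0(gh)=\mu_0(h)\circ\mu_0(g)$ on $K$ is inherited from $\mu$; on each $\overline U$ it reduces to $\widetilde{h_1\circ h_2}=\widetilde{h_1}\circ\widetilde{h_2}$ together with the telescoping $(\Phi^{\mu(g)(U)})^{-1}\circ\Phi^{\mu(g)(U)}=\id$.

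Freeness is immediate: if $g\neq 1$ fixes some $p\in\interior(D^n)$, then either $p\in K\cap\interior(D^n)$, contradicting freeness of $\mu$, or $p\in U$ for some $U\in\mathcal U$; in the latter case $\mu_0(g)(\overline U)=\overline{\mu(g)(U)}$ must contain $p\in U$, so $\mu(g)(U)=U$ and $g\in\Stab_\mu(U)=\{1\}$, a contradiction.

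The main obstacle is the convergence property. Let $(g_m)$ be a sequence of distinct elements. Since $(\mu,K,\mathcal U)$ is a convergence $G$-complex, a subsequence satisfies $\mu(g_m)\to a$ and $\mu(g_m^{-1})\to b$ uniformly on compact subsets of $K\setminus\{b\}$ and $K\setminus\{a\}$ respectively, with $a,b\in S^{n-1}$. We must upgrade to uniform convergence of $\mu_0(g_m)$ on compact subsets of $D^n\setminus\{b\}$. Fix compact $C\subset D^n\setminus\{b\}$ and $\epsilon>0$. By $(\star)$, only finitely many cells $V_1,\ldots,V_L\in\mathcal U$ have diameter $\geq\epsilon/2$. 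For $V\in\mathcal U$ with $\overline V\cap C\neq\emptyset$ and $\mu(g_m)(V)\notin\{V_1,\ldots,V_L\}$, we have $\mu_0(g_m)(\overline V)=\overline{\mu(g_m)(V)}$ of diameter $<\epsilon/2$, and after replacing $C$ by a well-chosen exhausting family $C_\alpha$ we can pick $q\in\partial V\cap C\subset K$ with $\mu(g_m)(q)\in B_{\epsilon/2}(a)$ for $m$ large, giving $\mu_0(g_m)(\overline V\cap C)\subset B_\epsilon(a)$. The finitely many exceptional $V$ with $\mu(g_m)(V)\in\{V_i\}$ are handled by the backward convergence: $V=\mu(g_m^{-1})(V_i)$ has small diameter and its closure sits in a small neighbourhood of $b$ for $m$ large, so $V$ misses $C$. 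This is a direct adaptation of Steps~3--5 of the proof of Lemma~\ref{lem:inheritance.of.convergence.property}, and it is the only genuinely technical step of the argument.
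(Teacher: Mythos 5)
Your construction of $\mu_0$ by conjugating the radial extension through the markings is exactly the paper's approach (it is the refinement of $(\mu,K,\mathcal U)$ by $(\mu_{rad},D^n,\emptyset)$ from \S\ref{subsec:refining.one.G-complex.by.another}), and your freeness argument is in fact the right one to give here: you use the triviality of $\Stab_\mu(U)$ directly, which is necessary since $(\mu_{rad},D^n,\emptyset)$ is itself not a free $G$-complex (each $\mu_{rad}(g)$ fixes the origin), so Proposition~\ref{prop:Markovic}(2) cannot be invoked naively. The well-definedness and continuity checks are the same as the paper's \S\ref{subsec:refining.one.G-complex.by.another}.

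The gap is in the convergence step. To dispose of the exceptional cells $V$ with $\overline V\cap C\neq\emptyset$ and $\mu(g_m)(V)\in\{V_1,\dots,V_L\}$ you assert that ``$V=\mu(g_m^{-1})(V_i)$ has small diameter and its closure sits in a small neighbourhood of $b$, so $V$ misses $C$.'' That conclusion is only available when $V$ is \emph{already} a small cell: if $\diam(V)<\delta$ and the backward convergence sends some fixed $p_i\in\partial V_i\subset K$ near $b$, then indeed $\overline V$ lies near $b$ and avoids $C$. But $V$ need not be small; by $(\star)$ it could be one of the finitely many large cells meeting $C$, and for those cells the backward convergence gives no contradiction. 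This is precisely the residual case that Step~5 of the proof of Lemma~\ref{lem:inheritance.of.convergence.property} is designed to handle, and closing it requires the hypothesis you have but never invoke for the convergence argument: since $\Stab_\mu(V)$ is trivial, if $\mu(g_m)(V)=\mu(g_{m'})(V)=W_i$ for $m\neq m'$ then $g_m^{-1}g_{m'}\in\Stab_\mu(V)=\{1\}$, contradicting that the $g_m$ are distinct. Hence each of the finitely many large cells meeting $C$ is sent to each large $W_i$ by at most one $g_m$, and for $m$ beyond these finitely many exceptional indices no cell meeting $C$ is sent to a large cell at all, so the estimate from your ``non-exceptional'' case applies to every cell meeting $C$. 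Without this use of trivial stabilizers the sketch does not dispatch the large exceptional $V$, and that is the entire point of the hypothesis in the Proposition.
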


\begin{proof}
    The $G$-complex $(\mu_0, K_0, \emptyset)$ will be the refinement of $(\mu, K, \mathcal{U})$ induced by the following $G$-complex:

    For every $g \in G$, we define $\mu_{rad}(g) : D^{n} \rightarrow D^{n}$ to be the radial extension of the homeomorphism $\mu(g) : S^{n-1} \rightarrow S^{n-1}$. Since the radial extension defines a monomorphism $\Homeo(S^{n-1}) \rightarrow \Homeo(D^{n})$, we obtain a $G$-complex $(\mu_{rad}, K_0, \emptyset)$.

    Set $(\mu_0, K_0, \emptyset)$ to be the refinement of $(\mu, K, \mathcal{U})$ induced by $(\mu_{rad}, K_0, \emptyset)$. By assumption, $\Stab_{\mu}(U)$ is trivial for every $U \in \mathcal{U}$. Therefore, it follows from Proposition \ref{prop:Markovic} (2) that $(\mu_0, K_0, \emptyset)$ is a free $G$-complex and from Proposition \ref{prop:Markovic} (1) that it is a convergence $G$-complex.
\end{proof}




\section{Uniformizing Sierpinski Spaces}\label{sec:Uniformizing-Sierpinski}
In this section we prove a result concerning embeddings of the Sierpinski space $\mathcal{M}_{n-1}^n$ into $S^n$.
We recall the definition of $\mathcal{M}_{n-1}^n$ and some of its properties. Let $I=[0,1]$ be the unit interval, let $\C\subset I$ denote the middle-thirds Cantor set, and let $\chi\colon I\rightarrow\{0,1\}$ be the indicator function for $\C$. 

\begin{definition} Let $I^n$ be the $n$-cube with coordinates $\mathbf{x}=(x_1,\ldots, x_n)$. For each $n\geq 1$ and each $k$ satisfying  $0\leq k\leq n$, we define a space $\mathcal{M}^n_k$ as follows.  For each $(n-k)$-element subset $\alpha\subseteq \{1,\ldots, n\}$, consider the function $f_\alpha(\mathbf{x})=\prod_{i\in \alpha}\chi(x_{i})$, and let $F_k(\mathbf{x})=\sum_{\alpha}f_\alpha(\mathbf{x})$. By convention, we define $F_n(\mathbf{x})\equiv1$. If $V_k=\{\mathbf{x}\in I^n\mid F_k(\mathbf{x})=0\}$ then $\mathcal{M}_k^n=I^n\setminus V_k$.
\end{definition}

By construction, $\mathcal{M}^n_k$ is the set of points with at least $n-k$ coordinates in $\C$. In particular, $\mathcal{M}^n_0$ is the set of points for which every coordinate lies in  $\C$, \emph{i.e.} $\C^n\subset I^n$, while $\mathcal{M}_{n}^n$ is $I^n$. The topological dimension of $\mathcal{M}_k^n$ is $k$, and it satisfies a universal property: any compact $k$-dimensional metric space which embeds into $\R^n$, embeds in $\calM^n_k$. In this sense, $\calM^n_k$ is the universal receptor for $k$-dimensional compacta in $\R^n$. 

By embedding $I^n$ into $S^n$, we can regard $\calM^n_k$ as a compact subset of $S^n$.  When $k=n-1$, each complementary component of $\calM^n_{n-1}$ is an open disk with boundary homeomorphic to $S^{n-1}(\cong \partial I^{n}$). These spheres are called peripheral.  With respect to the standard metric on $S^n$, the collection of peripheral spheres form a null collection, defined as follows. 

 \begin{definition}
        A collection of $\{\sigma_k\}_{k=\infty}^\infty$ of pairwise disjoint embedded $(n-1)$-spheres in $S^n$ (with its standard metric) is called a \emph{null collection} if $\lim_{k\to\infty}\diam(\sigma_k)=0.$
    \end{definition}

In the case of $\calM^n_{n-1}$, if one considers the complement of all the peripheral spheres, then the closure of each component is either  $I^n$ or $\calM^n_{n-1}$. We will show below that for a dense null collection of locally flat spheres, the closure of each component of the complement is homeomorphic to $\calM^n_{n-1}$, and that there are at most countably many such components.  For this, we will need the following definition.

    \begin{definition}
        An $n$-triod $\Upsilon^n$ is the topological space formed as the union of the $n$-disk $D^n$ and an interval $I$ such that $I\cap D^n$ is one of the endpoints of $I$ and an interior point of $D^n$.
    \end{definition}

    Since an $\Upsilon^{n-1}$ is $(n-1)$-dimensional, and clearly embeds in $\R^n$, it also embeds in $\calM^n_{n-1}$. In fact, there exists an embedding of $\Upsilon^{n-1}$ into the ``interior" of $\calM^n_{n-1}$.
    \begin{lemma}\label{lem:Disjoint-Triod}
        There exists an embedding of $\Upsilon^{n-1}$ into $\calM^{n}_{n-1}$ whose image is disjoint from every peripheral sphere.
    \end{lemma}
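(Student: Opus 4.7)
The plan is to produce the triod explicitly by exploiting the product structure of $I^n$ and the fact that the Cantor set $\C$ contains uncountably many points that are not endpoints of the removed middle-thirds intervals.

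First I would establish the following coordinate criterion. Let $B := I \setminus \C$ and let $E \subset \C$ denote the countable set of endpoints of the components of $B$, so that $\overline{B} = B \cup E$. When $\calM^n_{n-1}$ is viewed as a subset of $S^n$, its complement has two kinds of components: the outer disk $S^n \setminus I^n$ with boundary $\partial I^n$, and the removed cubes $\prod_i J_i$, where each $J_i$ is a component of $B$. A point $\mathbf{p} \in \calM^n_{n-1}$ thus lies on some peripheral sphere if and only if either some coordinate of $\mathbf{p}$ equals $0$ or $1$, or every coordinate of $\mathbf{p}$ lies in $\overline{B}$. Equivalently, $\mathbf{p}$ avoids every peripheral sphere precisely when all its coordinates lie in $(0,1)$ and at least one lies in the (dense, uncountable) set $\C \setminus E$.

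Next, fix $c \in (\C \setminus E) \cap (0,1)$; for instance $c = 1/4$, whose ternary expansion $0.\overline{02}$ places it in $\C$ but, being non-terminating, not in $E$. Choose $\epsilon > 0$ with $\epsilon < c < 1-\epsilon$, and set
\[
D \;:=\; \{c\} \times [\epsilon, 1-\epsilon]^{n-1}, \qquad
s \;:=\; [c, 1-\epsilon] \times \{(c, c, \ldots, c)\}.
\]
Every point of $D$ has first coordinate $c \in \C \setminus E$ and remaining coordinates in $[\epsilon, 1-\epsilon] \subset (0,1)$, so by the criterion above $D$ is a closed $(n-1)$-disk contained in $\calM^n_{n-1}$ and disjoint from every peripheral sphere. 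Similarly every point of $s$ has second coordinate $c \in \C \setminus E$ and all coordinates in $(0,1)$, so $s$ is a closed interval in $\calM^n_{n-1}$ also disjoint from every peripheral sphere.

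Finally, $D \cap s$ consists of all points whose first coordinate is $c$ and whose remaining coordinates are all equal to $c$, namely the single point $\mathbf{p} = (c, c, \ldots, c)$. By the choice of $\epsilon$ this point lies in $\{c\} \times (\epsilon, 1-\epsilon)^{n-1}$, the interior of $D$, and it is one endpoint of $s$, while the other endpoint $(1-\epsilon, c, \ldots, c)$ is not in $D$. Therefore $D \cup s$ is homeomorphic to $\Upsilon^{n-1}$. The only step of any subtlety is the coordinate criterion together with the observation that $\C \setminus E$ is nonempty (in fact uncountable); the construction is otherwise immediate, so I do not anticipate any serious obstacles.
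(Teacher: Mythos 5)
Your proof is correct and takes essentially the same approach as the paper: choose a non-endpoint Cantor-set coordinate for the slice containing the disk, verify that this forces disjointness from all peripheral spheres, and attach a short arc in the first coordinate direction from an interior point of the disk. The only cosmetic difference is that you state and verify the coordinate criterion explicitly (and pin down concrete choices such as $c=1/4$), whereas the paper leaves that verification implicit.
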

    \begin{proof}
        Recall that $\calM^n_{n-1}$ is the set of points in $I^n$ where at least one coordinate is in $\C$. Let $x_1$ be a point of $\C$ that is not a boundary point (i.e. not in the closure of one of the complementary intervals of $\C$). Then $\{x_1\}\times \interior(I^{n-1})\subset \calM^n_{n-1}$, and is disjoint from every peripheral sphere. We can then find a closed $(n-1)$-disk $D$ contained in $\{x_1\}\times \interior(I^{n-1})$. Now choose $x_2,\ldots,x_n \in \mathcal{C}$ that are not boundary points such that in $p=(x_1,\ldots x_n)\in \interior(D)$. Out of $p$, adjoin a small arc $\gamma=\{(x_1 + t,\ldots, x_{n-1}, x_n)\mid 0 \leq t\leq \epsilon\}$ for some $\epsilon<1 - x_1$. 
        Then $\gamma$ is also disjoint from every peripheral sphere, and meets $D$ only at $p$. Thus $\Upsilon=D\cup \gamma$ is the desired $(n-1)$-triod.  
    \end{proof}

    \begin{lemma}\label{lem:Countable-Spongy}
    Let $\{\Sigma_k\}_{k=1}^\infty$ be a dense null sequence of locally flat (n-1)-spheres embedded in $S^n$. Then $S^n\setminus ( \cup_{k=1}^\infty\Sigma_k)$ has countably many connected components, and the closure of each is homeomorphic to $\calM^n_{n-1}$.
    \end{lemma}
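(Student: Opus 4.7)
The plan is to argue in two main steps, modeled on the Whyburn--Cannon characterization of $\calM^n_{n-1}$. For each $k$, the generalized Schoenflies theorem (Theorem~\ref{thm:generalized.Schoenflies}) shows that $\Sigma_k$ separates $S^n$ into two open topological disks $U_k^{+}, U_k^{-}$ whose closures are each homeomorphic to $D^n$. Since any component $C$ of $X := S^n\setminus\bigcup_k\Sigma_k$ is connected and disjoint from each $\Sigma_k$, $C$ lies entirely in one of $U_k^{+}, U_k^{-}$; call the closure of the other open disk the \emph{cap} $V_k(C)$ of $C$ with respect to $\Sigma_k$.

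For countability, call $\Sigma_k$ \emph{peripheral} to $C$ if $\Sigma_k\subset\partial C$. The key claim is that the components of $S^n\setminus\overline{C}$ are precisely the open interiors of the caps $V_k(C)$ for which $\Sigma_k$ is peripheral to $C$. Combined with the observation that each $\Sigma_k$ can be peripheral to at most two components (one on each of its two sides), this produces a $2\aleph_0=\aleph_0$ bound on the number of components. Ruling out possible degenerate point-components (which would have no peripheral sphere) should follow from density of $\bigcup_k\Sigma_k$ in $S^n$ together with the null condition, but deserves a separate argument.

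For the second main claim, I would invoke a Whyburn--Cannon-type uniformization theorem: a compact subset $M\subset S^n$ is homeomorphic to $\calM^n_{n-1}$ provided $M$ is connected, locally connected, has empty interior in $S^n$, and $S^n\setminus M$ is a countable disjoint union of open topological disks whose boundaries are pairwise disjoint locally flat $(n-1)$-spheres forming a null family. For $\overline{C}$: compactness and connectedness are immediate; empty interior follows from density of $\bigcup_k\Sigma_k$ in $S^n$; and the complementary components of $\overline{C}$ are identified with the open interiors of the caps $V_k(C)$ associated to peripheral $\Sigma_k$, which are locally flat by hypothesis, have pairwise disjoint closures (since the $\Sigma_k$ are pairwise disjoint and the caps lie on one side of each sphere), and form a null family as a sub-family of $\{\Sigma_k\}$.

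The main obstacle I anticipate is verifying local connectedness of $\overline{C}$: one must show that every $p\in\overline{C}$ admits arbitrarily small connected neighborhoods in $\overline{C}$. The null condition ensures that only finitely many peripheral caps of diameter exceeding a given $\epsilon>0$ meet a fixed neighborhood $U$ of $p$, but constructing short connected paths inside $\overline{C}\cap U$ that navigate the infinitely many small remaining caps requires a delicate modification argument relying on local flatness of the $\Sigma_k$ and Schoenflies. A secondary difficulty is the rigorous identification of $\partial\overline{C}$ with precisely the union of spheres peripheral to $C$, which underpins both the counting argument of Step~1 and the uniformization of Step~2.
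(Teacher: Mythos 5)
Your approach to the second claim (that $\overline{C}\cong\calM^n_{n-1}$) uses the same essential tool as the paper, namely Cannon's positional characterization of the $(n-1)$-dimensional Sierpinski compactum, but you overstate its hypotheses. The theorem cited in the paper (\cite[Theorem 1]{Cannon-Sierpinski}) does \emph{not} take connectedness or local connectedness as hypotheses; those are consequences. Its hypotheses are simply that the set is compact with empty interior, and that its complement in $S^n$ is a null sequence of pairwise disjoint tame open $n$-cells with pairwise disjoint closures. So the ``main obstacle'' you identify -- verifying local connectedness of $\overline{C}$ -- is not an obstacle at all; you would be proving something the cited theorem does not ask for. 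The genuine work is the step you mention only in passing: identifying each complementary domain of $\overline{C}$ as a tame $n$-ball bounded by a single $\Sigma_k$, with these closed balls pairwise disjoint and forming a null family.

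Your countability argument is genuinely different from the paper's, and the gaps you flag are real and not easy to fill in the order you propose. You count components via a relation ``$\Sigma_k$ is peripheral to at most two components,'' which requires (a) that every component has at least one peripheral sphere (in particular that there are no degenerate point-components), and (b) that two distinct components on the same side of $\Sigma_k$ cannot both have $\Sigma_k$ in their boundary. Both require substantial argument, and your Step~1 implicitly leans on the identification of $\partial\overline{C}$ with a union of whole peripheral spheres -- which is essentially the content of the second claim, making the logical order circular as written. The paper avoids all of this by proving the homeomorphism $\overline{C}\cong\calM^n_{n-1}$ \emph{first}, then deducing countability as a corollary: Lemma~\ref{lem:Disjoint-Triod} embeds an $(n-1)$-triod in each $\overline{C}$ disjoint from all peripheral spheres, distinct closures meet only in peripheral spheres so the triods are pairwise disjoint, and Young's generalization of Moore's triod theorem bounds the number of pairwise disjoint $(n-1)$-triods in $S^n$ by $\aleph_0$. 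If you want to save your countability argument, you would need to establish $\overline{C}\cong\calM^n_{n-1}$ first (as the paper does) and only then do the peripheral-sphere bookkeeping -- at which point the triod argument is simpler.
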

    \begin{proof}
    We first prove the second claim. Let $\calK$ be the closure of a component of $S^n\setminus (\cup_{k=1}^\infty\Sigma_k)$. Since the $\Sigma_k$ are dense and locally flat in $S^n$, the closure $\calK$ is nowhere dense. Moreover, because each $\Sigma_k$ is locally flat, $S^n\setminus \calK$ consists of a union of tame n-balls $\{D_i\}$. The boundary of each $D_i$ is one of the $\Sigma_k$, hence $\overline{D}_i\cap \overline{D}_j=\emptyset$ if $i\neq j$. The fact that the $\Sigma_k$ form a null collection implies that the $\overline{D}_i$ do too. By \cite[Theorem 1]{Cannon-Sierpinski}, $\calK$ is homeomorphic to $\calM^n_{n-1}$. (The case $n=4$ follows from {\cite[Theorem 1.3]{BanakhRepovs}}.)

    Now we prove that there are only countably many components of $S^n\setminus (\cup_{k=1}^\infty\Sigma_k)$.  Let $\calK_i$, $i\in I$ be the closures of components of $S^n \setminus (\cup_{k=1}^\infty\Sigma_k)$.  Observe that the intersection $\calK_i\cap \calK_j$ is either empty  or equal to $\Sigma_k$ for some $k$. By Lemma \ref{lem:Disjoint-Triod}, in each $\calK_i$ we can find an embedded $(n-1)$-triod $\Upsilon_i\cong\Upsilon^{n-1}\subset\calK_i$ which does not meet any  peripheral sphere of $\calK_i$. Since $\calK_i$ and $\calK_j$ meet only in a peripheral sphere if at all
    , the triods will be disjoint. Young \cite{Young} generalized Moore's triod theorem to all dimensions, proving  that $\mathbb{R}^n$ (and thus $S^n$) contains at most countably many pairwise disjoint embedded $(n-1)$-triods. 
    Hence there can be at most countably many $\calK_i$. This proves the first claim, and the lemma.
    \end{proof}

    \begin{proposition}\label{prop:Uniformize-Sierpinski} Let $\{\Sigma_k\}_{k=1}^\infty$ be a dense null sequence of locally flat $(n-1)$-spheres embedded in $S^n$. Then there exists a homeomorphism $F\colon S^n\rightarrow S^n$ such that $F(\Sigma_k)$ is round for each $k$.
    \end{proposition}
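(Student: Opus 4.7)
My plan is to compare the given family $\{\Sigma_k\}$ to a dense null family of round spheres with the same combinatorial structure, and then assemble $F$ cell by cell.

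First, I would exploit Lemma \ref{lem:Countable-Spongy}: the complement $S^n \setminus \bigcup_k \Sigma_k$ decomposes into countably many components $\{C_i\}$ with $\overline{C_i} \cong \calM^n_{n-1}$. Each $\Sigma_k$ is a peripheral sphere of exactly two cells (one on each side, which are tame balls by Theorem \ref{thm:generalized.Schoenflies}), so the adjacency graph with vertex set $\{C_i\}$ and edge set $\{\Sigma_k\}$ is a tree $T$ (because $S^n$ is simply connected, and each $\Sigma_k$ separates). I would then construct a reference dense null family $\{R_k\}$ of pairwise disjoint round $(n-1)$-spheres in $S^n$ whose adjacency tree is isomorphic to $T$ by a recursive insertion: start with $R_1$ an equator, and insert each $R_k$ as a small round sphere inside the appropriate previously-constructed complementary disk, tuning radii so that both nullity and density are preserved.

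Next, for each pair of corresponding cells $(\overline{C_i}, \overline{D_i})$ under this tree isomorphism (where $D_i$ is the round-model cell matched to $C_i$), I would build a homeomorphism $\phi_i\colon \overline{C_i} \to \overline{D_i}$ sending each peripheral sphere $\Sigma_k$ of $C_i$ onto the matched round sphere $R_k$. This is a uniformization statement for pairs of embedded copies of $\calM^n_{n-1}$ in $S^n$ with locally flat peripheral spheres and a prescribed boundary matching; its proof combines Cannon's topological characterization of $\calM^n_{n-1}$ from \cite{Cannon-Sierpinski} with Theorem \ref{thm:generalized.Schoenflies} applied on each peripheral sphere to enforce the matching. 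Crucially, the restriction $\phi_i|_{\Sigma_k}$ must be chosen to depend only on $\Sigma_k$ (and on $R_k$), not on the cell $i$, so that adjacent $\phi_i$ and $\phi_j$ automatically agree on the shared peripheral sphere between $C_i$ and $C_j$.

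Finally, I would glue the family $\{\phi_i\}$ into a map $F\colon S^n \to S^n$. Compatibility on shared peripheral spheres holds by the uniform choice of boundary data. Bijectivity of $F$ on $\bigsqcup_i \overline{C_i} \cup \bigcup_k \Sigma_k = S^n$ is immediate, and continuity at accumulation points of the $\Sigma_k$ follows from nullity: sequences from distinct cells can only accumulate when the cells themselves shrink in diameter, so that $\diam\overline{D_i} \to 0$ simultaneously with $\diam\overline{C_i}$. A continuous bijection of the compact Hausdorff space $S^n$ is a homeomorphism, and $F(\Sigma_k) = R_k$ is round by construction. The main obstacle is the cell-by-cell uniformization in the third paragraph: realizing all prescribed peripheral matchings of $\overline{C_i}$ simultaneously, and doing so with enough geometric control (Schoenflies extensions with $C^0$-diameter comparable to $\diam \Sigma_k$ on each complementary disk) to ensure continuity after gluing. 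In dimension $n=2$ this is Whyburn's classical Sierpinski carpet uniformization; in higher dimensions it demands an inductive Schoenflies extension argument layered on top of Cannon's topological characterization of $\calM^n_{n-1}$, and extracting the required geometric control is the technical heart of the argument.
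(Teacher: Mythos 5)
Your approach starts the same way the paper does — invoke Lemma \ref{lem:Countable-Spongy} to decompose both the given family and a reference round family into countable unions of Sierpinski pieces glued along peripheral spheres — but then diverges in a way that creates a genuine gap. You propose to fix a tree isomorphism between the two cell complexes in advance, then prescribe a boundary matching $\phi_i|_{\Sigma_k}$ on \emph{every} peripheral sphere of \emph{every} cell (chosen to depend only on $\Sigma_k$), and finally solve for a homeomorphism $\phi_i\colon\overline{C_i}\to\overline{D_i}$ realizing all of these simultaneously. That last step is the problem: Cannon's \cite[Lemma 1]{Cannon-Sierpinski} only lets you extend a homeomorphism that is prescribed on a \emph{single} peripheral sphere, and there is no off-the-shelf result that gives you an extension realizing prescribed data on all countably many peripheral spheres at once, let alone with the quantitative diameter control you need for the glued map to be continuous. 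You flag this yourself as ``the technical heart,'' but the honest status is that this step is unsupported and is in fact harder than the original proposition.

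The paper avoids this difficulty entirely by making the construction sequential rather than simultaneous. It does not fix the tree isomorphism or the boundary data in advance; it takes an arbitrary dense null round family and builds $F$ cell by cell. At stage $N+1$ it picks one new Sierpinski cell $\calK_{j_{N+1}}$ adjacent to the previously-covered region along a single peripheral sphere $\Sigma_{j_{N+1}}$, lets $\Delta_{m_{N+1}} := F_N(\Sigma_{j_{N+1}})$, and then applies Cannon's Lemma 1 to extend this \emph{one} prescribed boundary homeomorphism across the new cell. All the other peripheral spheres of $\calK_{j_{N+1}}$ are unconstrained at this stage, so Cannon's result applies directly, and compatibility with neighbouring cells constructed later is automatic because those later extensions will be built to agree with what $F_N$ already does. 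The tree matching thus \emph{emerges} from the induction rather than being imposed beforehand. If you want to salvage your version you would need to either prove the simultaneous-matching uniformization lemma (which would be a substantial new theorem) or restructure the argument as a one-sphere-at-a-time induction as the paper does.
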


    \begin{proof}
       By Lemma \ref{lem:Countable-Spongy}, there are countably many components of $S^n\setminus (\cup_{k=1}^\infty\Sigma_k)$, and the closure of each is homeomorphic to $\calM_{n-1}^n.$ Enumerate the closures of these complementary components as $\{\calK_j\}_{j=1}^\infty$.  Clearly we have $\cup_{j=1}^\infty\calK_j=S^n$ and as we observed in the proof of Lemma \ref{lem:Countable-Spongy}, for $i\neq j$, the intersection $\calK_i\cap \calK_j$ is empty or some $\Sigma_k$. Conversely, each $\Sigma_k$ is the intersection of exactly two of the $\calK_i$. 
       
       Now let $\Delta_r$ be a dense null collection of round $(n-1)$-spheres in $S^n$.  The closures of $S^n\setminus (\cup_{m=1}^\infty \Delta_r)$ are also homeomorphic to $\calM_{n-1}^n$, and again there are countably many of them, which we denote by $\calL_m,$ $m\geq 1$.  We build $F\colon S^n\rightarrow S^n$ as follows. First set $j_1=m_1=1$ and choose a homeomorphism $F_1\colon\calK_1\rightarrow \calL_1$. Now suppose we have defined $F_N : \bigcup_{i = 1}^N \mathcal{K}_{j_i} \rightarrow \bigcup_{i=1}^N \mathcal{L}_{j_i}$. Choose some peripheral sphere of $\Sigma_{j_{N+1}}\subset\cup_{i=1}^N\calK_{j_i}$ and let $\Delta_{m_{N+1}}\subset \cup_{i=1}^N\calL_{j_i}$ be the corresponding peripheral sphere. There is a unique $\calK_{j_{N+1}}$ not among $\calK_{j_1},\ldots, \calK_{j_N}$ meeting $\Sigma_{j_{N+1}}$. Let $\calL_{m_{N+1}}$ be the corresponding element meeting $\Delta_{m_{N+1}}$.  By \cite[Lemma 1]{Cannon-Sierpinski}, there exists a homeomorphism $h_{N+1}\colon \calK_{j_{N+1}}\rightarrow \calL_{j_{N+1}}$ extending $F_N\colon \Sigma_{j_{N+1}}\rightarrow \Delta_{m_{N+1}}$. (The proof Cannon gives also works when $n=4$ case, once one knows that the annulus theorem holds. This was proven by Quinn \cite{Quinn-Ends-III,Edwards-Annulus}.)  We use this to define $F_{N+1}\colon \cup_{i=1}^{N+1}\calK_{j_i}\rightarrow \cup_{i=1}^{N+1}\calL_{m_i}$, by pasting $F_N$ to $h_{N+1}$ along $\Sigma_{j_{N+1}}$, where they agree. 
       
       Since $F_N$ agrees with $F_{N+1}$ on $\cup_{i=1}^{N+1}\calK_{j_i}$, in the limit we get a well-defined map $F\colon S^n\rightarrow S^n$. Injectivity is clear from the definition, hence $F$ is an embedding as $S^n$ is compact and Hausdorff.  In particular, the image of $F$ is closed. On the other hand, the domain and codomain have the same dimension so $F$ is also open. Since $S^n$ is connected, $F$ must be onto, hence a homeomorphism. 
    \end{proof}

\section{Proof of the main theorem}\label{sec:General-Markovic}
In this section we prove Theorem \ref{thm:Sn-cubulated}.  Let us first recall the hypotheses. We assume that $G$ is a torsion-free hyperbolic group whose boundary $\dinf G$ is homeomorphic to $S^{n-1}$, $n\neq 5$. Moreover $G$ contains a finite collection of quasi-convex, codimension-1 subgroups $\mathcal{H}$ satisfying:
\begin{enumerate}
    \item $\mathcal{H}$ separates pairs of points at infinity.
    \item For each $H\in \mathcal{H}$, $\dinf H$ is a locally flat $S^{n-2}$ in $S^{n-1}\cong \dinf G.$
\end{enumerate}
By (1), we know that $\mathcal{H}$ cubulates $G$; that is, $G$ acts geometrically on a finite-dimensional CAT(0) cube complex whose hyperplane stabilizers are conjugates of the $H_i\in \mathcal{H}$.

Recall that $G$ acts on its boundary by homeomorphisms, inducing an action by automorphisms on the cohomology $H^{n-1}(\dinf G)\cong H^{n-1}(S^{n-1})\cong \Z$. By passing to a subgroup of index at most $2$, we may therefore assume that the action of $G$ on $\dinf G$ is orientation-preserving. We denote the action of $G$ on the boundary $S^{n-1}$ by a representation $\rho\colon G\rightarrow \Homeo^+(S^{n-1})$. 

\begin{lemma}\label{lem:Orientable-Manifold-Subgroups}
    After replacing $\mathcal{H}$ by a family of finite index subgroups $\mathcal{H}'$, we may assume that  each $H\in \mathcal{H}'$ additionally satisfies:
    \begin{enumerate}
        \item[(3)] There exists a closed, orientable $n-1$-manifold $N_H$ such that $H\cong \pi_1(N_H)$. Additionally, $H$ preserves each complementary component of $\partial_{\infty} G \setminus \partial_{\infty} H$ and acts freely, properly discontinuously cocompactly on each.
    \end{enumerate}
\end{lemma}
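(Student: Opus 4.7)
The plan is, for each $H\in\mathcal{H}$, to pass to an index-at-most-$2$ subgroup and then invoke Bestvina--Mess together with Poincar\'e duality to extract a closed, orientable $(n-1)$-manifold model for $H$.

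\textbf{Component-preserving subgroup.} By hypothesis (2) and the generalized Schoenflies theorem (Theorem~\ref{thm:generalized.Schoenflies}), $\dinf H$ separates $\dinf G\cong S^{n-1}$ into two open $(n-1)$-disks $D^{+}$ and $D^{-}$. The $H$-action on $\dinf G$ permutes $\{D^{+},D^{-}\}$, so there is a subgroup $H^{*}\leq H$ of index at most $2$ preserving each of $D^{\pm}$. Since $H^{*}$ has the same limit set $\dinf H$ as $H$, hypotheses (1) and (2) persist for the replacement family $\mathcal{H}^{*}=\{H_{1}^{*},\ldots,H_{k}^{*}\}$.

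\textbf{The action on $D^{\pm}$ is free and properly discontinuous.} Since $G$ is torsion-free hyperbolic, every non-trivial element of $H^{*}$ is loxodromic with both fixed points on the limit set $\dinf H$; in particular no non-trivial element fixes a point of $D^{\pm}\subset\dinf G\setminus\dinf H$, so the action is free. Proper discontinuity follows from the fact that $G$ acts on $\dinf G$ as a convergence group and the $H^{*}$-limit set is precisely $\dinf H$, so $H^{*}$ acts properly on the domain of discontinuity $\dinf G\setminus\dinf H$.

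\textbf{Cocompactness via duality.} The quotient $N^{\pm}:=D^{\pm}/H^{*}$ is an aspherical $(n-1)$-manifold with contractible universal cover $D^{\pm}\cong\mathbb{R}^{n-1}$, so $N^{\pm}$ is a $K(H^{*},1)$. The Bestvina--Mess theorem identifies torsion-free hyperbolic groups with $(n-2)$-sphere boundary as Poincar\'e duality groups of dimension $n-1$; hence $H^{*}$ is a $\mathrm{PD}_{n-1}$ group. Combining Poincar\'e duality for $N^{\pm}$ with $\mathbb{Z}/2$ coefficients (to sidestep orientation bookkeeping) with the $K(H^{*},1)$-identification and PD for $H^{*}$ yields
\[ H^{k}(N^{\pm};\mathbb{Z}/2)\cong H^{k}(H^{*};\mathbb{Z}/2)\cong H_{n-1-k}(H^{*};\mathbb{Z}/2)\cong H_{n-1-k}(N^{\pm};\mathbb{Z}/2)\cong H^{k}_{c}(N^{\pm};\mathbb{Z}/2) \]
for every $k$. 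For a connected non-compact $(n-1)$-manifold $M$, however, $H^{n-1}(M;\mathbb{Z}/2)=0$ while $H^{n-1}_{c}(M;\mathbb{Z}/2)\cong\mathbb{Z}/2$, so the resulting isomorphism in top degree forces $N^{\pm}$ to be compact. Orientability is automatic: by the reduction made at the start of this section, $G$ acts on $\dinf G$ in an orientation-preserving manner, so $H^{*}$ acts by orientation-preserving homeomorphisms on the open submanifold $D^{\pm}\subset\dinf G$, and $N^{\pm}$ inherits a coherent orientation. Setting $\mathcal{H}':=\mathcal{H}^{*}$ and $N_{H}:=N^{+}$ then supplies the data required by condition (3).

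\textbf{Main obstacle.} The technical heart is the cocompactness argument. One must simultaneously appeal to Bestvina--Mess to upgrade the sphere-boundary hypothesis on $H^{*}$ to the $\mathrm{PD}_{n-1}$ property, carry out Poincar\'e duality for a possibly non-orientable open $(n-1)$-manifold, and then rigorously compare ordinary with compactly supported $\mathbb{Z}/2$ cohomology to rule out non-compactness. The other ingredients (freeness, proper discontinuity, orientability, the index-$2$ reduction) are comparatively straightforward once the local flatness hypothesis is used to invoke Schoenflies.
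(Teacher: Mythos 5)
Your proof is correct and reaches the same conclusion, but it diverges from the paper's at the crucial cocompactness step. The paper proves freeness, proper discontinuity, and cocompactness of the $H'$-action on each complementary disk in one stroke by citing a single theorem of Swenson on convergence-group actions of hyperbolic groups. You instead break this into three independent arguments: freeness from the fact that non-trivial elements of a torsion-free hyperbolic group are loxodromic with fixed points on the limit set; proper discontinuity from the general convergence-group fact that a group acts properly on the complement of its limit set; and cocompactness via the Bestvina--Mess $\mathrm{PD}_{n-1}$ identification plus a comparison of $H^{n-1}(N^{\pm};\mathbb{Z}/2)$ with $H^{n-1}_{c}(N^{\pm};\mathbb{Z}/2)$. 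The duality chain you write down is valid (with $\mathbb{Z}/2$ coefficients Poincar\'e duality for an open manifold holds without orientability hypotheses, and $H^{n-1}(N^{\pm};\mathbb{Z}/2)=0$ while $H^{n-1}_{c}(N^{\pm};\mathbb{Z}/2)\cong\mathbb{Z}/2$ for a connected non-compact $(n-1)$-manifold), so it does force compactness. What your route buys is a self-contained explanation of \emph{why} cocompactness holds, built from duality principles; what it costs is that it needs the Bestvina--Mess $\mathrm{PD}$-group input, whereas the paper's invocation of Swenson is shorter and packages the three properties together. The remaining steps (index-two reduction, same limit set, orientability inherited from the orientation-preserving $G$-action) coincide with the paper's.
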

\begin{proof}
    Let $H\in \mathcal{H}$. By (2) and the generalized Schoenflies theorem, the pair $(\dinf G, \dinf H)$  is homeomorphic to the standard pair $(S^{n-1}, S^{n-2})$.  In particular, $\dinf G\setminus \dinf H$ consists of exactly two components, each homeomorphic to $\R^{n-1}$ and each of whose closures is homeomorphic to the closed $n-1$-ball $\D^{n-1}$. 
    
    Since the action of $H$ on $\dinf G$ preserves $\dinf H$ it also preserves the complement.  Therefore, there exists a subgroup $H'\leq H$ of index at most $2$ such that $H'$ preserves each complementary component of $\dinf G\setminus \dinf H$. Because $H'$ has finite index in $H$, the limit set of $H'$ is the same as that of $H$, \emph{i.e.} $\dinf H'=\dinf H$. Since the action of $G$ on $S^{n-1}$ is orientation-preserving, the action of $H'$ on $S^{n-2}$ must also be orientation-preserving. 
    
    Let $U$ be one of the two components of $\dinf G\setminus \dinf H'$. By a result of Swenson \cite{Swenson}, $H'$ acts freely, properly discontinuously and cocompactly on $U$.  Since $U\cong \R^{n-1}$, this implies that $H'$ is isomorphic to the fundamental group of  $U/H'=N_{H'}$, which is a closed, orientable, aspherical $n-1$-manifold. Now the collection $\mathcal{H}'$ obtained by replacing each $H\in \mathcal{H}$ with its corresponding subgroup $H'$ satisfies (1), (2) and (3). 
    %
    %
\end{proof}


We now assume $\mathcal{H}$ satisfies conditions (1)--(3).  Applying Theorem \ref{thm:Markovic-Malnormal}, we may assume that $\mathcal{H}$ also satisfies
\begin{enumerate}
    \item[(4)] Every $H\in \mathcal{H}$ is malnormal.
\end{enumerate}


Let $\Lambda_i$ denote the limit set of $H_i$, which can be identified with $\dinf H_i$.  The action of each $H_i$ preserves the limit set $\Lambda_i$, which by assumption is homeomorphic to a locally flat $(n-2)$-sphere.  
Now consider $H_i^G :=\{H_i^g=gH_ig^{-1}\mid g\in G\}$, the set of conjugates of $H_i$.  The limit set of $H_i^g$ is $g\Lambda_i$, which is disjoint from $\Lambda_i$ by malnormality. Each translate $g\Lambda_i$ separates $S^{n-1}$ into two components, hence we may define the dual tree $T_i$ whose edges are translates $g\Lambda_i$ and whose vertices are in 1-1 correspondence with connected components of \[S^{n-1}\setminus \left(\bigcup_{g\in G}g\Lambda_i\right).\]

For any $H=H_i^g\in H_i^G$, we use the orientation on $S^{n-1}$ to choose a left side $L_H$ and a right side $R_H$ of $\Lambda_H=g\Lambda_i$. The sides $L_H$ and $R_H$ correspond to the two half-edges at the midpoint of each edge of $T_i$. By $(3)$, $G$ acts on $T_i$ without inversions, hence for any $g'\in G$, the left side (resp. right side) of $\Lambda_H$  is taken to the left side (resp. right side) of $g'\Lambda_H=\Lambda_{H^{g'}}$.

\begin{lemma}\label{lem:Left-to-Right-Homeo}For $H\in H_i^G$, there exists a collection of homeomorphisms $\sigma_H\colon \overline{L}_H\rightarrow \overline{R}_H$ satisfying
\begin{enumerate}[(i)]
\item $\sigma_H$ restricts to the identity on $\Lambda_H$.
\item For any $g\in G$, $H\in H_i^G$ and  $x\in L_H$ we have \[(\sigma_{H^g}\circ \rho(g))(x)=(\rho(g)\circ \sigma_H)(x).\]
\end{enumerate}
    
\end{lemma}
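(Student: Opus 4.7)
The plan is to construct $\sigma_H$ for a single representative $H$ of each $G$-conjugacy class in $H_i^G$ and then propagate to the full orbit by $G$-equivariance. Malnormality of $H$ (condition~(4)) forces $\mathrm{Stab}_G(\Lambda_H)=H$: if $g\Lambda_H=\Lambda_H$, then $H$ and $H^g$ are quasi-convex subgroups of the hyperbolic group $G$ with the same limit set, hence commensurable, so $H\cap H^g$ is infinite, contradicting malnormality unless $g\in H$. Consequently, for any $g_1,g_2\in G$ with $H^{g_1}=H^{g_2}$ one has $g_2=g_1 h$ for some $h\in H$, so setting
\[
\sigma_{H^g}\;:=\;\rho(g)\circ\sigma_H\circ\rho(g^{-1})
\]
is unambiguous provided $\sigma_H$ is $H$-equivariant, i.e.\ $\sigma_H\circ\rho(h)=\rho(h)\circ\sigma_H$ for every $h\in H$. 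Property~(ii) is then immediate from the definition, and property~(i) is preserved because $\rho(g)(\Lambda_H)=\Lambda_{H^g}$ and $\sigma_H|_{\Lambda_H}=\mathrm{id}$.

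For the core construction, fix one representative $H$. By Lemma~\ref{lem:Orientable-Manifold-Subgroups}, the quotients $N_L:=L_H/H$ and $N_R:=R_H/H$ are closed, orientable, aspherical $(n-1)$-manifolds with fundamental group $H$. Both are $K(H,1)$'s, hence homotopy equivalent via a map inducing $\mathrm{id}_H$ on $\pi_1$. Since $H$ is hyperbolic and $n-1\neq 4$, Corollary~\ref{cor:Hyperbolic-Borel-Except-4} upgrades this to a homeomorphism $\bar f : N_L\to N_R$. Lifting to universal covers (which by the generalized Schoenflies theorem are identified with $\mathrm{int}(D^{n-1})$) produces an $H$-equivariant homeomorphism $\tilde f : L_H\to R_H$.

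The remaining step is to extend $\tilde f$ continuously to $\overline{L}_H\to\overline{R}_H$ as the identity on $\Lambda_H$. Because $H$ acts geometrically on both $L_H$ and $R_H$, any orbit map is a quasi-isometry from $H$, so $\tilde f$ is an $H$-equivariant quasi-isometry and admits a unique extension to an $H$-equivariant homeomorphism $\partial_\infty \tilde f :\Lambda_H\to\Lambda_H$ of Gromov boundaries. Since $H$ is non-elementary hyperbolic, its action on $\Lambda_H=\partial_\infty H$ has a dense set of attracting fixed points of loxodromic elements; any $H$-equivariant self-homeomorphism of $\Lambda_H$ must fix each such point, and therefore equals the identity, so $\partial_\infty\tilde f=\mathrm{id}$. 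Defining $\sigma_H$ by $\tilde f$ on $L_H$ and the identity on $\Lambda_H$ then yields a continuous bijection, which is a homeomorphism by compactness of $\overline{L}_H$.

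The main obstacle is establishing continuity of $\sigma_H$ at $\Lambda_H$. One must verify that the Schoenflies topology on $\overline{L}_H$ near $\Lambda_H$ matches the topology induced by the Gromov compactification of $H$, so that the quasi-isometric boundary extension above agrees with a genuine continuous extension of $\tilde f$ to the closed ball. The local flatness hypothesis~(2) is crucial here, since it is precisely what supplies the compactifications $\overline{L}_H,\overline{R}_H\cong D^{n-1}$ via the generalized Schoenflies theorem and makes the identification between the topological and dynamical boundary structures canonical.
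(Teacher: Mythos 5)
Your proof follows essentially the same route as the paper's: promote a homotopy equivalence between the quotient manifolds $N_i^L,N_i^R$ to a homeomorphism via the Borel conjecture (Corollary \ref{cor:Hyperbolic-Borel-Except-4}), lift it to an $H_i$-equivariant homeomorphism $\wt f\colon L_{H_i}\to R_{H_i}$, extend by the identity on $\Lambda_{H_i}$, and propagate to all $H\in H_i^G$ by conjugation, using malnormality for well-definedness. Your route through $\Stab_G(\Lambda_H)=H$ via commensurability is equivalent to the paper's appeal to $N_G(H_i)=H_i$, both being immediate from malnormality, and your argument that the boundary extension of $\wt f$ equals the identity via density of loxodromic fixed points supplies a justification of a step the paper asserts without proof.

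The technical point you flag in your final paragraph is real, and the paper glosses over it as well, so it is worth saying how to close it: $H_i$ acts on $\overline{L}_{H_i}\subset\dinf G$ as a convergence group with limit set $\Lambda_{H_i}$, and this convergence structure determines when a sequence $z_m\in L_{H_i}$ converges to a point $\xi\in\Lambda_{H_i}$ (it is detected by the dynamics of loxodromic elements, whose fixed points are dense in $\Lambda_{H_i}$). The Gromov compactification of $L_{H_i}$ as a proper geodesic $\delta$-hyperbolic space carries the same convergence structure, so the two compactifications agree $H_i$-equivariantly, and the Gromov-boundary extension of $\wt f$ is the continuous extension you need. However, your closing sentence attributing the identification to the local flatness hypothesis is not quite the right framing: local flatness and Schoenflies supply the homeomorphism $\overline{L}_{H_i}\cong D^{n-1}$, but the agreement of the subspace topology with the Gromov compactification topology is a consequence of the convergence group structure, and would hold regardless of the homeomorphism type of $\overline{L}_{H_i}$. (Compare the remark in the proof of Proposition \ref{prop:Flatness-Criterion}, where the orbit map is used to identify $\Lambda$ with the boundary of $Z$.)
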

\begin{proof}
     By (3), $H_i$ acts on both sides $L_{H_i},~R_{H_i}$ of $S^{n-1}\setminus \Lambda_i$ properly discontinuously, cocompactly. Define $N_i^L=L_{H_i}/H_i$, and $N_i^R=R_{H_i}/H_i$. Choosing basepoints in $N_i^L$ and $N_i^R$, we get an identification between $\pi_1(N_i^L)$ and $\pi_1(N_i^R)$. Since both of these are aspherical, we obtain a homotopy equivalence $h_i\colon N_i^L\rightarrow N_i^R$. Our choice of $h_i$ yields a commutative diagram \[\xymatrix{\pi_1(N_i^L)\ar[dr]\ar[rr]^{(h_i)_*}&&\pi_1(N_i^R)\ar[dl]\\& H_i &}\]
     and in particular the lift $\wt{h}_i
     \colon \wt{N}_i^L\rightarrow \wt{N}_i^R$ to universal covers induces the identity map on the boundary.  By Corollary \ref{cor:Hyperbolic-Borel-Except-4}, $h_i$ is homotopic to a homeomorphism $f_i\colon N_i^L\rightarrow N_i^R.$ (This is where we need that $n \neq 5$.)  Since $f_i$ is homotopic to $h_i$, the same holds for the lift  $\wt{f}_i$. Set $\sigma_{H_i}=\wt{f}_i\cup \id_{\Lambda_{H_i}} \colon \overline{L}_{H_i}\rightarrow \overline{R}_{H_i}$; by construction $\sigma_{H_i}\circ\rho(h)(x)=\rho(h)\circ \sigma_{H_i}(x)$ for every $h\in H_i$ and $x\in \overl{L}_{H_i}$.
     
     Now suppose $H=H_i^{g_0}$ and define $\sigma_H=\rho(g_0)\circ \sigma_{H_i}\circ\rho(g_0)^{-1}$. Since the action of $g$ takes the left side of $H_i$ to the left side of $H$, $\sigma_H\colon \overline{L}_{H}\rightarrow \overline{R}_{H}$ is a homeomorphism and $\sigma_H|_{\Lambda_H}$  is the identity since the same holds for $\sigma_{H_i}$, which proves (i). Let $g_1\in G$ be another group element satisfying $H_i^{g_1}=H$, whence $g_1=g_0h$ for some $h\in H_i$. If we define $\sigma_H'=\rho(g_1)\sigma_{H_i}\rho(g_1)^{-1}$ then for any $x\in \overl{L}_H$
     \begin{eqnarray*}\sigma_H'(x)&=&\rho(g_1)\circ\sigma_{H_i}\circ\rho(g_1)^{-1}(x)\\
     &=&\rho(g_0)\circ\rho(h)\circ\sigma_{H_i}\circ\rho(h)^{-1}\circ\rho(g_0)^{-1}(x)\\&=&\rho(g_0)\circ\sigma_{H_i}\circ\rho(g_0)^{-1}(x)\\&=&\sigma_H(x)
     \end{eqnarray*}
     where the third equality follows from $H_i$-equivariance of $\sigma_{H_i}$. Thus $\sigma_H$ depends only the coset $gH_i$. The equality in (ii) now follows easily.
\end{proof}


Combined with results of \S \ref{sec:Uniformizing-Sierpinski}, we now use the homeomorphisms constructed above to construct a generalized cell decomposition of $D^n$ for each $H_i\in \mathcal{H}.$

\begin{lemma}\label{lem:QC-Gives-Cell-Structure}Let $\D^n$ denote the closed $n$-disk with the Euclidean metric, with boundary $\Sa^{n-1}$. There exists a homeomorphism $f_i\colon D^n\rightarrow \D^n$ and a collection of pairwise disjoint, properly embedded $(n-1)$-disks $B_{H}\cong \D^{n-1}\subset \D^n$, $H\in H_i^G$ satisfying the following properties:
\begin{enumerate}[(i)]
    \item $\partial B_H=f_i(\Lambda_H)$.
    \item If $\diam(f_i(\Lambda_H))\to 0$ then $\diam(B_H)\to 0$.
    \item Let \[K_i=\Sa^{n-1}\cup \left(\bigcup_{H\in H_i^G}B_H\right),\] and let $\mathcal{U}_i$ be the set of complementary components of $\D^{n}\setminus K_i$.  Then $K_i$ is closed and each pair $(U,\partial U)$, $U\in \mathcal{U}_i$ is homeomorphic to $(D^n,S^{n-1})$.
    \item Let $U\in \mathcal{U}_i$ and $a,b\in \partial U\cap S^{n-1}$. Then for each $H\in H_i^g$, $f_i(\Lambda_H)$ does not separate $a$ and $b$.
\end{enumerate}
\begin{proof}
    For each $\epsilon>0$, there are only finitely many $\Lambda_H,$ $H\in H_i^G$ such that $\diam(\Lambda_H)\geq \epsilon.$ The union $\cup_{H\in H^{G}_i}\Lambda_H$ is dense in $S^n$ because it contains the orbit of the endpoints of one axis of an element of $G$. Therefore $\{\Lambda_H\mid H\in H_i^G\}$ forms a dense null collection. By Proposition \ref{prop:Uniformize-Sierpinski}, there exists a homeomorphism $f_i\colon S^{n-1}\rightarrow \Sa^{n-1}$ such that $f_i(\Lambda_H)$ is a round $(n-2)$-sphere, for each $H\in H_i^G$. By the Alexander trick, $f_i$ extends to a homeomorphism $f_i\colon D^n\rightarrow \D^n$.

    Regarding $\interior(\D^{n})$ as hyperbolic $n$-space in the Poincar\'e model, for each $H\in H_i^G$, $f_i(\Lambda_H)$ bounds a round hemisphere $B_H$, where $B_H\cap B_{H'}$ are disjoint if $H\neq H'$. In the Euclidean metric, $\diam(B_H)=\diam(f_i(\Lambda_{B_H}))$, proving (i) and (ii). The remainder of the proof is exactly the same as that of \cite[Lemma 3.2]{Markovic13}.
\end{proof}
    
\end{lemma}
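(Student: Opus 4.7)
My plan is to produce $f_i$ and the disks $B_H$ using hyperbolic geometry on the Poincar\'e ball, then verify the four properties in sequence, leaning on the uniformization result from the previous section for the only substantive topological input.

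First, I would verify that $\{\Lambda_H : H \in H_i^G\}$ is a dense null collection of locally flat $(n-2)$-spheres in $S^{n-1} \cong \partial_\infty G$. Density is immediate: pick any infinite-order $g \in H_i$; its pair of fixed points lies in $\Lambda_i$, so the $G$-orbit of this pair is dense in $\partial_\infty G \times \partial_\infty G$ minus the diagonal, forcing the translates $g \Lambda_i$ to approach every point. Nullness is the standard consequence of quasi-convexity and malnormality of $H_i$ in the hyperbolic group $G$: for each $\epsilon > 0$, only finitely many cosets $gH_i$ have translates of a fixed compact fundamental domain for $H_i$ near the origin, so only finitely many $g\Lambda_i$ have diameter $\geq \epsilon$. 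Local flatness of each $\Lambda_H$ is preserved under the $G$-action, since $G$ acts by homeomorphisms of $S^{n-1}$. Now Proposition \ref{prop:Uniformize-Sierpinski} produces a homeomorphism $f_i : S^{n-1} \to \Sa^{n-1}$ carrying each $\Lambda_H$ to a round $(n-2)$-sphere $\sigma_H$, and the Alexander trick extends $f_i$ to a homeomorphism of the closed disks.

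Next, I define $B_H$ to be the hyperbolic hyperplane in $\interior(\D^n)$ (Poincar\'e ball model) with ideal boundary $\sigma_H$, which in Euclidean terms is a round hemisphere meeting $\Sa^{n-1}$ orthogonally along $\sigma_H$. Properties (i) and (ii) are then essentially trivial: (i) holds by construction, and (ii) follows because a round Euclidean hemisphere $B_H$ bounded by a round sphere $\sigma_H \subset \Sa^{n-1}$ has Euclidean diameter controlled by the Euclidean diameter of $\sigma_H$ (for small $\sigma_H$ the hemisphere is nearly flat and its diameter is comparable to that of $\sigma_H$). Pairwise disjointness of the $B_H$ is geometric: distinct $\Lambda_H, \Lambda_{H'}$ are disjoint by malnormality, so the round spheres $\sigma_H, \sigma_{H'}$ are disjoint, and two hyperbolic hyperplanes with disjoint ideal boundaries cannot intersect.

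For (iii), I would show $K_i$ is closed using the null property: any sequence in $K_i$ either visits only finitely many distinct $B_H$ (in which case a subsequence converges inside a single closed hemisphere or inside $\Sa^{n-1}$, both of which are in $K_i$) or else visits infinitely many $B_H$ whose Euclidean diameters shrink, forcing any accumulation point to lie in $\Sa^{n-1} \subset K_i$. That each complementary component $U \in \mathcal{U}_i$ is an open $n$-ball with boundary an $(n-1)$-sphere is the main technical obstacle, and this is where I would invoke the argument from \cite[Lemma 3.2]{Markovic13}: in the Poincar\'e model one considers the convex region carved out by the geodesic hyperplanes, whose closure in $\overline{\D^n}$ is a nested intersection of round half-spaces. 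A uniformization argument analogous to Proposition \ref{prop:Uniformize-Sierpinski}, using the Sch\"onflies theorem and the null-collection structure of the bounding hemispheres, shows that the closure of each such region is homeomorphic to $D^n$ with boundary the union of the bounding hemispheres and a Sierpinski-like piece of $\Sa^{n-1}$.

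Finally, (iv) is combinatorial: the components of $\D^n \setminus K_i$ correspond to vertices of the dual tree $T_i$ built from the $\Lambda_H$, and two boundary points $a, b \in \partial U \cap S^{n-1}$ lie in the limit set of the same vertex of $T_i$, so no $f_i(\Lambda_H)$ (corresponding to an edge of $T_i$) can separate them in $\Sa^{n-1}$. Thus the main obstacle is really just establishing the disk structure of complementary components in (iii); everything else reduces to the uniformization from Section \ref{sec:Uniformizing-Sierpinski} together with routine hyperbolic geometry.
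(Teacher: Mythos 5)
Your proposal follows essentially the same route as the paper: establish that $\{\Lambda_H\}_{H\in H_i^G}$ is a dense null collection of locally flat $(n-2)$-spheres, apply Proposition~\ref{prop:Uniformize-Sierpinski} and the Alexander trick to get $f_i$, take the $B_H$ to be geodesic hyperplanes (round hemispheres) in the Poincar\'e ball model, and defer (iii)--(iv) to Markovic's Lemma 3.2, which is what the paper does as well. One small refinement in the paper: for (ii) the Euclidean diameter of a geodesic hemisphere in the Poincar\'e model is exactly equal to that of its ideal boundary sphere (not merely comparable), so (ii) is immediate.
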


Thus the pair $(K_i, \mathcal{U}_i)$ is a generalized cell decomposition. The next result shows that each $H_i\in \mathcal{H}$ yields a free convergence $G$-complex, and when combined with the results of Section \ref{sec:G-complexes}, it will be the basis for the induction in the proof of the main theorem.

\begin{lemma}\label{lem:QC-Gives-Free-Convergence}There is a free convergence $G$-complex $(\rho_i, K_i, \mathcal{U}_i)$ satisfying:
\begin{enumerate}
    \item[(5)]For any $U\in \mathcal{U}_i$, any $g\neq 1\in \Stab_{\rho_i}(U)$ and any $H\in H^G_i$, $\Lambda_H$ does not separate the fixed points of $g$ in $\partial U$.
\end{enumerate}
\end{lemma}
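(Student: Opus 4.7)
The plan is to construct the $G$-action $\rho_i$ on $K_i$ by extending $\rho$ from $S^{n-1}$ to each disk $B_H$ via the side-identifications $\sigma_H$ of Lemma \ref{lem:Left-to-Right-Homeo}, and then verify in turn that the result is a $G$-complex, that it is free, that it is a convergence $G$-complex, and that property (5) holds. The construction follows the strategy of \cite{Markovic13}.

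On $S^{n-1}$ we take $\rho_i=\rho$. By malnormality $\Stab_G(H)=H$ for every $H\in H_i^G$, and by Lemma \ref{lem:Orientable-Manifold-Subgroups} each such $H$ acts freely, properly discontinuously, and cocompactly on the closed half $\overline{L}_H$, which is a topological $(n-1)$-disk. Choose a homeomorphism $\phi_{H_i}\colon \overline{L}_{H_i}\rightarrow B_{H_i}$ restricting to $f_i$ on $\Lambda_{H_i}$, and propagate it equivariantly through each $G$-orbit by conjugating through a fixed coset representative; independence of the representative is guaranteed by the $H$-equivariance of $\sigma_H$ from Lemma \ref{lem:Left-to-Right-Homeo}(ii), exactly as in the proof of that lemma. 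Setting $\rho_i(g)|_{B_H} := \phi_{H^g}\circ\rho(g)|_{\overline{L}_H}\circ \phi_H^{-1}$ then defines a homomorphism $G\to\Homeo(K_i)$ that agrees with $\rho$ on every $\Lambda_H$, and markings $\Phi^U$ for the resulting $G$-complex come from Lemma \ref{lem:QC-Gives-Cell-Structure}(iii) via Schoenflies.

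Freeness is immediate: if $x\in K_i\cap\interior(D^n)$, then $x$ lies in the interior of a unique $B_H$, so $\rho_i(g)(x)=x$ forces $H^g=H$, hence $g\in H$ by malnormality, and then $\rho_i(g)$ acts on $\interior(B_H)$ via the conjugate under $\phi_H$ of the free action of $H$ on $L_H$. For the convergence property, pass to a subsequence along which $\rho(g_m)\to a$ uniformly on compact subsets of $S^{n-1}\setminus\{b\}$; since the disks $B_H$ form a null family (Lemma \ref{lem:QC-Gives-Cell-Structure}(ii)) and $\rho_i(g_m)(B_H)=B_{H_m}$ with $\diam(B_{H_m})\to 0$ whenever $\Lambda_H$ avoids a fixed neighborhood of $b$, a direct adaptation of the exhaustion-and-partition argument used in the proof of Lemma \ref{lem:inheritance.of.convergence.property} yields uniform convergence of $\rho_i(g_m)$ to $a$ on compact subsets of $K_i\setminus\{b\}$.

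Finally, for property (5), let $g\neq 1\in\Stab_{\rho_i}(U)$ and let $p,q\in\partial U$ be fixed points of $\rho_i(g)$. By freeness, $p,q\in S^{n-1}\cap\partial U$, and $S^{n-1}\cap\partial U$ is the closure in $S^{n-1}$ of a single connected component $V$ of $S^{n-1}\setminus\bigcup_{H\in H_i^G}\Lambda_H$ (the one corresponding to $U$ under the cell/tree duality). For any $H'\in H_i^G$, the connected set $V$ is disjoint from $\Lambda_{H'}$ and so lies in a single component of $S^{n-1}\setminus\Lambda_{H'}$; hence $\overline{V}$ is contained in the closure of that component, and $\Lambda_{H'}$ cannot separate $p$ from $q$. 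The main obstacle in this proof is the convergence step, which requires the careful bookkeeping developed in the proof of Lemma \ref{lem:inheritance.of.convergence.property}; the remaining parts are routine given the construction and the geometry of the cell decomposition from Lemma \ref{lem:QC-Gives-Cell-Structure}.
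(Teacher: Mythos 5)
Your proposal follows essentially the same route as the paper: define $\rho_i$ on each $B_H$ by transporting $\rho_0$ through homeomorphisms $\psi_H\colon\overline{L}_H\to\overline{B}_H$ via the formula $\rho_i(g)|_{B_H}=\psi_{H^g}\circ\rho_0(g)\circ\psi_H^{-1}$, then verify freeness, convergence, and property (5). The paper states exactly this formula and delegates the rest of the verification to Markovic's Lemma 3.2; your freeness and property (5) arguments are correct (property (5) amounts to citing Lemma~\ref{lem:QC-Gives-Cell-Structure}(iv), once one knows from the convergence property that the attracting/repelling fixed points of a nontrivial $g\in\Stab_{\rho_i}(U)$ must lie in $\overline{U}\cap\Sa^{n-1}\subset\partial U$).

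One point in your construction is off and worth flagging. You "propagate $\phi_{H_i}$ equivariantly by conjugating through a fixed coset representative," and justify independence of the representative by citing the $H$-equivariance of $\sigma_H$ from Lemma~\ref{lem:Left-to-Right-Homeo}(ii). This is both circular and unnecessary. Circular because the conjugation you describe would need a map $B_{H_i}\to B_{H^g}$, i.e.\ it would need $\rho_i(g)|_{B_{H_i}}$, which you are in the middle of defining; and the maps $\sigma_H\colon\overline{L}_H\to\overline{R}_H$ are side-swapping maps on $S^{n-1}$, not maps $\overline{L}_H\to B_H$, so their equivariance does not control your $\phi_H$'s. Unnecessary because the paper simply chooses $\psi_H$ for each $H$ independently (subject only to $\psi_H|_{\Lambda_H}=f_i|_{\Lambda_H}$), and a direct check shows the conjugation formula already yields a well-defined homomorphism and gluing on $\partial B_H$ without imposing any compatibility among the $\psi_H$'s. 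The maps $\sigma_H$ of Lemma~\ref{lem:Left-to-Right-Homeo} do get used — but in the Markovic-style convergence argument (they are the stand-in for Markovic's Proposition 3.6), which you only gesture at, not in building the action. So the right fix is: drop the equivariant propagation, take arbitrary $\psi_H$ as in the paper, and reserve Lemma~\ref{lem:Left-to-Right-Homeo} for the convergence step.
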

\begin{proof}[Proof of Lemma \ref{lem:QC-Gives-Free-Convergence}] Let ($K_i,\mathcal{U}_i)$ be the generalized cell decomposition from Lemma \ref{lem:QC-Gives-Cell-Structure}. For each $H\in H^G_i$, choose a homeomorphism $\psi_H\colon \overl{L}_H\rightarrow \overl{B}_H$ which restricts to the identity on $\Lambda_H$. Let $\rho_0$ be the action of $G$ on $S^{n-1}$. We extend this to an action $\rho_i$ on $K_i$ via the formula:
\begin{equation}\label{eq:Equivariant-BH}\tag{$\dagger$}
    \rho_i(g)(x)=(\psi_{gHg^{-1}}\circ \rho_0(g)\circ\psi_H^{-1})(x), \textrm{ for } x\in B_H.
\end{equation}
 This is a homeomorphism of $K_i$ since it agrees with $\rho_0$ on $S^{n-1}$ and the interiors $\interior(B_H)$ are pairwise disjoint. A simple calculation using Equation (\ref{eq:Equivariant-BH}) verifies that $\rho_i$ defines a $G$-action. The proof that $(\rho_i,\mathcal{U}_i,K_i)$ is a free, convergence $G$-action follows exactly as in  \cite[Lemma 3.2]{Markovic13}, substituting Lemma \ref{lem:Left-to-Right-Homeo} for \cite[Proposition 3.6]{Markovic13} and Lemma \ref{lem:QC-Gives-Cell-Structure} for \cite[Proposition 3.7]{Markovic13}.
\end{proof}

We now combine the results of this section to deduce Theorem \ref{thm:Sn-cubulated}.  

\begin{proof}[Proof of \ref{thm:Sn-cubulated}] Let $G$ be a torsion-free hyperbolic group such that $\dinf G\cong S^{n-1}$, and let $\mathcal{H}=\{H_1,\ldots, H_k\}$ be quasi-convex, codimension-1 subgroups which satisfy (1) and (2). By Lemma \ref{lem:Orientable-Manifold-Subgroups} and Theorem \ref{thm:Markovic-Malnormal}, we may assume each $H_i\in\mathcal{H}$ satisfies (1)--(4). Now by Lemmas \ref{lem:QC-Gives-Cell-Structure} and \ref{lem:QC-Gives-Free-Convergence}, for $1\leq i\leq k$, there exists  a homeomorphism $f_i\colon D^n\rightarrow \D^n$ and a free convergence $G$-complex $(\rho_i,K_i,\mathcal{U}_i)$ satisfying (5), such that $K_i$ is a union of $\Sa^{n-1}$ with a disjoint collection of round hemispheres $\B_H$, $H\in H_i^G$.

For $1\leq i\leq k$, we now define a sequence of free, convergence $G$-complexes $(\mu_i,L_i,\mathcal{V}_i)$ that is a refinement of $(\rho_i,K_i,\mathcal{U}_i)$. To start, set $(\mu_1,L_1,\mathcal{V}_1)=(\rho_1,K_1,\mathcal{U}_1)$, and then inductively define $(\mu_{i+1},L_{i+1},\mathcal{V}_{i+1})$ as the refinement of $(\rho_{i+1},K_{i+1},\mathcal{U}_{i+1})$ by the pushforward $(F_{i+1})_*(\mu_i,L_i,\mathcal{V}_i)$, where $F_{i+1}:=f_{i+1}\circ f_i^{-1}$. By construction, $(\rho_{i+1}, L_{i+1}, \mathcal{V}_{i+1})$ is a $G$-complex; moreover, it is a free, convergence $G$-complex by Proposition \ref{prop:Markovic} (1) and (2).


Now consider $(\mu_k,\mathcal{V}_k,L_k)$. By Proposition \ref{prop:Markovic} (3), for any $V\in \mathcal{V}_k$ there exist $U_i\in \mathcal{U}_i$ such that $\Stab_{\mu_k}(V)=\cap_{i=1}^k\Stab_{\rho_{i}}(U_i)$. Suppose that there exists $g\neq 1\in \Stab_{\mu_k}(V)$, and let $\xi^{\pm}$ be the fixed points of $g\in S^{n-1}.$ Since $g\in \Stab_{\rho_i}(U_i)$, then by (5) we know that $\xi^+$ and $\xi^-$ are not separated by $\Lambda_H$ for any $H\in H_i^G$ for each $i$.  But this holds for every $H_i\in \mathcal{H}$, contradicting our assumption that $\mathcal{H}$ separates pairs of points in $\dinf G$. Therefore $\Stab_{\mu_k}(V)$ is trivial for every $V\in \mathcal{V}_k$. The proof now follows from  Propositions \ref{prop:radial.extension} and \ref{prop:quotient.of.maximal.G-action}.
\end{proof}




\section{Characterization of local flatness for codimension-1 submanifolds} \label{sec:local.flatness.of.codimension.one.submanifolds}

\subsection{From simple connectedness to local flatness} \label{sec:simply.connected.implies.standard}

Let $G$ be a torsion-free, hyperbolic group such that $\partial_{\infty} G \approx S^{n-1}$. Let $H < G$ be a quasi-convex, codimension-1 subgroup such that its limit set $\Lambda \subset \partial_{\infty} G$ is homeomorphic to $S^{n-2}$. By the Jordan-Brouwer separation theorem, $\partial_{\infty} G \setminus \Lambda$ has exactly two connected components, which we denote by $Z^{+}$ and $Z^{-}$. 


Our goal in this section is to prove that if $Z^{+}$ and $Z^{-}$ are simply connected, then the embedding $\Lambda \hookrightarrow \partial_{\infty} G$ is equivalent to the standard embedding $S^{n-2} \hookrightarrow S^{n-1}$. To show this, we need a series of definitions from topology. We do not give much context to these definitions, as they merely appear as assumptions in established theorems that we need to invoke.

\begin{definition} \label{def:ANR}
An {\it absolute neighborhood retract} (ANR) is a normal topological space $X$ such that for every normal space $Z$, every closed subset $Y \subset Z$ and every continuous map $f : Y \rightarrow X$ there exists an open neighborhood $U$ of $Y$ such that $f$ has a continuous extension $U \rightarrow X$.
\end{definition}

We will need two important facts about ANRs which are summarised in the following Lemma.

\begin{lemma}[{\cite[Theorem 5 \& Theorem 14]{Palais66}}] \label{lem:facts.about.ANR}
    Paracompact Hausdorff spaces which are locally ANRs are ANRs. (This includes open topological manifolds.) Furthermore, any ANR has the homotopy type of a CW-complex.
\end{lemma}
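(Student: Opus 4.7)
The lemma is cited from Palais, so the plan is simply to sketch the two classical arguments; a full proof would take us far afield.

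For the first statement, that paracompact Hausdorff spaces which are locally ANRs are themselves ANRs, I would argue as follows. Let $X$ be such a space, let $Z$ be normal, $Y \subset Z$ closed, and $f \colon Y \to X$ continuous. By hypothesis, cover $X$ by open sets $\{U_\alpha\}$ each of which is an ANR. The preimages $V_\alpha = f^{-1}(U_\alpha)$ form an open cover of $Y$. For each $\alpha$ the ANR property of $U_\alpha$ lets us extend $f|_{V_\alpha}$ across some open $W_\alpha \subset Z$ with $W_\alpha \cap Y = V_\alpha$, yielding local extensions $g_\alpha \colon W_\alpha \to U_\alpha$. The main obstacle is to assemble these local extensions into a single continuous map defined on a neighborhood of $Y$. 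The standard device is to pass to a locally finite open refinement on $X$, pull back via (a perturbed version of) $f$ to $Z$, and then glue the $g_\alpha$ using a partition of unity subordinate to this refinement together with a nerve-type construction; paracompactness of $X$ (hence of $Y$) and normality of $Z$ are exactly what make the gluing go through.

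For the second statement, that any ANR has the homotopy type of a CW-complex, the strategy splits into two classical ingredients. First, one shows that every ANR $X$ is homotopy-dominated by a simplicial complex: choose a sufficiently fine open cover $\mathcal{U}$ of $X$, form its nerve $N(\mathcal{U})$, and use a partition of unity subordinate to $\mathcal{U}$ to build a map $p \colon X \to N(\mathcal{U})$; the ANR extension property then produces a map $q \colon N(\mathcal{U}) \to X$ (defined first on vertices, then extended simplex-by-simplex) such that $q \circ p \simeq \id_X$. Second, one invokes Milnor's theorem that any space dominated by a CW-complex has the homotopy type of a CW-complex (via an infinite mapping telescope/idempotent argument). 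The most delicate step is the first: to obtain domination rather than a mere map, one must build the cover and extension simultaneously, ensuring that the successive skeletal extensions of $q$ land in open sets whose diameters are controlled by the nerve construction.

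In both parts the deep input is the interaction between paracompactness of the source and the local extension property of the target; once partitions of unity are available, the arguments proceed by standard inductive gluing, and no dimensional or geometric hypothesis on $X$ is used. Since these are purely point-set topological facts whose proofs are self-contained and available in Palais's paper, I would simply cite them rather than reproduce the details here.
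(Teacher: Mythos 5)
The paper itself offers no proof of this lemma; it is stated as a citation to Palais (Theorems 5 and 14), and your ultimate conclusion---that one should simply cite Palais rather than reproduce the arguments---matches the paper's treatment exactly. Your accompanying sketches of the two classical arguments are a correct outline of the standard approach (Hanner-style gluing via partitions of unity and nerves for the first part, nerve-domination plus Milnor's CW-domination theorem for the second), with only a minor imprecision in the first: the sets $V_\alpha = f^{-1}(U_\alpha)$ are open in $Y$, not closed in $Z$, so one cannot apply the ANR extension property to them directly without first shrinking to a locally finite closed refinement---but since you flag the gluing step as the real work and defer to the reference, this does not affect the correctness of what you propose.
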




\begin{definition} \label{def:kLLC}
	Let $Y$ be a compact space of dimension $k$ and $Z \subset Y$ a closed subspace. We say that $Z$ is {\it $k$-LCC} in $Y$, if for every $z \in Z$ and every neighborhood $U$ of $z$, there exists a neighborhood $V$ of $z$ such that every continuous map $\alpha : S^k \rightarrow V \setminus Z$ extends continuously to $\tilde{\alpha} : D^{k+1} \rightarrow U \setminus Z$.
\end{definition}


Let $Z$ denote either $Z^{+}$ or $Z^{-}$. Observe that $Z$ is an open subspace of the compact manifold $S^{n-1}$ 
and thus an ANR. In particular, $Z$ has the homotopy type of a CW complex. We begin with the following result.

\begin{lemma} \label{lem:simply.connected.implies.contractibility}
    If $Z$ is simply connected, then $Z$ is contracible.
\end{lemma}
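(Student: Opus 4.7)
My plan is to combine Alexander duality with the Hurewicz and Whitehead theorems.

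The first step is to compute the homology of $Z$. Since $\Lambda \cong S^{n-2}$ is a compact topological manifold, hence an ANR (and in particular locally contractible) embedded in $S^{n-1}$, classical Alexander duality applies and yields
\[ \tilde{H}_i(S^{n-1} \setminus \Lambda) \cong \tilde{H}^{n-2-i}(\Lambda) \cong \tilde{H}^{n-2-i}(S^{n-2}). \]
The right-hand side vanishes except when $n-2-i = n-2$, i.e., $i = 0$, where it equals $\mathbb{Z}$ (which simply records that $S^{n-1} \setminus \Lambda$ has two components). Therefore $\tilde{H}_i(S^{n-1} \setminus \Lambda) = 0$ for all $i \geq 1$, and since $S^{n-1} \setminus \Lambda = Z^{+} \sqcup Z^{-}$ with both components connected, this forces $\tilde{H}_i(Z) = 0$ for every $i \geq 0$.

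The second step promotes this homological vanishing to contractibility. As already noted in the discussion preceding the lemma, $Z$ is an open subset of the compact manifold $S^{n-1}$, hence a paracompact Hausdorff space that is locally Euclidean; by Lemma \ref{lem:facts.about.ANR} it is an ANR and has the homotopy type of a CW complex. Given the simple connectivity hypothesis, I apply the Hurewicz theorem inductively: the vanishing $H_2(Z) = 0$ gives $\pi_2(Z) = 0$ and hence $2$-connectedness, then $H_3(Z) = 0$ gives $\pi_3(Z) = 0$, and so on, so every homotopy group of $Z$ is trivial. By Whitehead's theorem (available because $Z$ has CW homotopy type), the inclusion of a point into $Z$ is a homotopy equivalence, so $Z$ is contractible.

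The only delicate point is invoking Alexander duality for a purely topological (not PL or smooth) embedding of $S^{n-2}$ into $S^{n-1}$. What saves us is that $\Lambda$ is an ANR, so its singular and \v{C}ech cohomology agree on it, and the classical Alexander duality theorem applies as stated. Beyond this, the proof reduces to textbook homotopy theory, so I do not anticipate further technical obstacles.
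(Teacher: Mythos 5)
Your proof is correct and follows essentially the same route as the paper: Alexander duality gives acyclicity of $Z$, then Hurewicz and Whitehead (using the CW homotopy type established via the ANR property, which the paper notes just before the lemma) promote simple connectivity to contractibility. Your extra remark about why Alexander duality applies to a purely topological embedding is a welcome clarification of a point the paper handles implicitly.
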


\begin{proof}
        As $\Lambda \cong S^{n-2}$ is locally contractible, Alexander duality implies that $\tilde{H}_i(S^{n-1} \setminus \Lambda) \cong \tilde{H}^{n-2-i}(\Lambda)$. Since $S^{n-1} \setminus \Lambda = Z^{+} \coprod Z^{-}$, we know that the homology of $Z$ appears as a summand in $\tilde{H}_i(S^{n-1} \setminus \Lambda)$. The fact that $\tilde{H}^{n-2-i}(S^{n-2}) = 0$ for all $i \geq 1$ implies that $Z$ is acyclic. (For degree zero, note that $Z$ is a connected component.) Since $Z$ is simply connected, we may use the Hurewicz theorem to conclude that the higher homotopy groups vanish as well. Contractibility now follows from Whitehead's theorem.
    %
    %
\end{proof}

For any finitely generated group $G$, there is a family of finite-dimensional,  simplicial complexes on which $G$ acts geometrically, known as Rips complexes.  

\begin{definition}Let $S$ be a finite generating set for $G$ and consider the word metric $d=d_{G,S}$ for $G$ with respect to $S$. For any $r>0$ we define a simplicial complex called the \emph{Rips complex} $P_r(G)=P_r(G,S)$ as follows. The vertex set of $P_r(G)$ is $G$, and $g_0, \ldots, g_k$ span a $k$-simplex if $d(g_i,g_j)\leq r$ for all $i,j\in \{0,\ldots, k\}$.
\end{definition}
The word metric on $G$ induces a metric on $P_r(G)$, and $G$ acts faithfully, cocompactly on $P_r(G)$ with finite stabilizers. By Milnor--Schwarz, $G$ is quasi-isometric to $P_r(G)$. When $G$ is hyperbolic, then for any generating set and $r$ sufficiently large, $P_r(G)$ is in fact contractible \cite{GhysDelaharpe}. In particular, if $G$ is torsion-free then the action of $G$ on $P_r(G)$ is free and thus for $r$ sufficienty large $P_r(G)/G$ is a $K(G,1)$. In this case, Bestvina--Mess show that the proper homotopy type of $P_r(G)$ is an invariant of $G$ \cite{BestvinaMess91}.

\begin{proposition}[Flatness Criterion]\label{prop:Flatness-Criterion}
    If $Z^{+}$ and $Z^{-}$ are simply connected, then the embedding $\Lambda \hookrightarrow \partial_{\infty} G$ is locally flat, hence conjugate by a homeomorphism to a standard embedding $S^{n-2} \hookrightarrow S^{n-1}$.
\end{proposition}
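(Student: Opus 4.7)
The strategy is to prove that $\overline{Z^+}$ and $\overline{Z^-}$ are closed topological $(n-1)$-disks with boundary $\Lambda$, from which local flatness (and then the conclusion, via Theorem \ref{thm:generalized.Schoenflies} and Remark \ref{rem:schoenflies.implies.standard}) is immediate.

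First I would set up the group action. By Lemma \ref{lem:simply.connected.implies.contractibility}, both $Z^+$ and $Z^-$ are contractible; as open subsets of $S^{n-1}$ they are topological $(n-1)$-manifolds. Following the argument of Lemma \ref{lem:Orientable-Manifold-Subgroups}, after passing to an index-$\leq 2$ subgroup $H^0 \leq H$ (whose limit set is still $\Lambda$), $H^0$ preserves each of $Z^+, Z^-$ and by Swenson's theorem acts freely, properly discontinuously, and cocompactly on each. Thus $N^{\pm} := Z^{\pm}/H^0$ are closed aspherical $(n-1)$-manifolds with $\pi_1(N^{\pm}) \cong H^0$, and $Z^{\pm}$ are their universal covers.

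The heart of the argument is to identify the pair $(\overline{Z^+}, \Lambda)$ with $(D^{n-1}, S^{n-2})$. As a quasi-convex subgroup of the torsion-free hyperbolic group $G$, $H^0$ is itself torsion-free hyperbolic, with $\partial_\infty H^0$ canonically identified with $\Lambda \cong S^{n-2}$, and it acts geometrically on the closed aspherical manifold $N^+$. In dimensions $n \geq 6$, I would combine the Borel conjecture for hyperbolic groups (Theorem \ref{thm:Hyperbolic-Borel-Conjecture}) with results on the Bestvina--Mess compactification of the universal cover of a closed aspherical manifold with hyperbolic fundamental group to conclude that the canonical equivariant compactification of $Z^+$ by $\partial_\infty H^0$ is homeomorphic to $D^{n-1}$. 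The key technical point to verify is that $\Lambda$ is a $Z$-set in $\overline{Z^+}$: since $Z^+$ is open in the ambient manifold $S^{n-1}$ and has frontier $\Lambda$, the identity of $\overline{Z^+}$ can be $C^0$-approximated by maps into $Z^+$ by pushing slightly along transverse directions in $S^{n-1}$. This identifies $\overline{Z^+}\subset S^{n-1}$ with the disk compactification, giving $(\overline{Z^+}, \Lambda) \cong (D^{n-1}, S^{n-2})$, and analogously for $Z^-$.

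Granted this, Brown's collar neighborhood theorem provides half-collars of $\Lambda$ in $\overline{Z^+}$ and $\overline{Z^-}$; their union is a bicollar of $\Lambda$ in $S^{n-1}$, which is precisely local flatness. The main obstacle is the middle step: it is essential that $H^0$ acts geometrically, because mere simple connectivity of $Z^{\pm}$ does not on its own prevent wild embeddings (1-LCC at every point of $\Lambda$ is a strictly local condition, not implied by a global $\pi_1 = 1$ on each complementary domain). The geometric action of the hyperbolic group $H^0$ is what forces the end behavior of $Z^{\pm}$ near $\Lambda$ to be tame and the compactification to be a disk.
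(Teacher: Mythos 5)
Your overall route differs from the paper's: you aim to show outright that $(\overline{Z^{\pm}},\Lambda)\cong(D^{n-1},S^{n-2})$ and then appeal to Brown's collar theorem, whereas the paper establishes the strictly weaker local property that $\Lambda$ is $k$-LCC in both $\overline{Z^{+}}$ and $\overline{Z^{-}}$ and then invokes a local-flatness criterion (\cite[Theorem 7.6.5]{DavermanVenema09}); showing the $k$-LCC property is exactly what \cite[Theorem 2.4]{BartelsLuckWeinberger10} delivers once one identifies $Z$ with the universal cover of a compact $K(H,1)$ and $\Lambda$ with $\partial_\infty H$ via the orbit map. Your plan is correct in spirit — the geometric action of $H$ is what makes the argument go — but the step where you assert that $\Lambda$ is a $Z$-set in $\overline{Z^{+}}$ ``by pushing slightly along transverse directions in $S^{n-1}$'' is circular: the existence of a transverse push near $\Lambda$ already presupposes a collar, i.e. local flatness, which is precisely what you are trying to prove. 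If $\Lambda$ were wild from the $Z^{+}$ side there would be no such push, and indeed $\overline{Z^{+}}$ need not even be an ANR a priori (only the open set $Z^{+}$ is automatically one).

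There is a second, related gap: even granting the $Z$-set property, you would need to verify that the subspace topology on $\overline{Z^{+}}\subset S^{n-1}$ coincides with the topology of the Bestvina--Mess compactification $Z^{+}\cup\partial_\infty H$. This is not formal — it is one of the things that has to be checked, and it is effectively built into the application of \cite[Theorem 2.4]{BartelsLuckWeinberger10} in the paper. Going through ``$\overline{Z^{\pm}}$ is a disk'' is also a heavier tool than necessary: recognizing the compactification as a topological disk (as in BLW's Theorem A) relies on surgery theory and thus carries its own dimension restrictions, whereas the $k$-LCC route avoids the disk-recognition problem entirely. If you want to salvage your outline, replace the transverse-push heuristic with an actual citation establishing the $Z$-set/$k$-LCC structure of $\Lambda$ in $\overline{Z^{\pm}}$ from the geometric $H$-action — which will land you back at essentially the paper's argument.
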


\begin{proof}
    Let $Z$ be either $Z^{+}$ or $Z^{-}$. Since $H$ acts on $\partial_{\infty} G$ and preserves its limit set $\Lambda$, it also preserves the complement $Z^{+} \coprod Z^{-}$. If the action of $H$ does not preserve $Z$, we may find an index 2 subgroup (which we also denote by $H$ and which has the same limit set), whose action preserves $Z$. Since $Z$ simply connected, it is contractible by Lemma \ref{lem:simply.connected.implies.contractibility}. As $H$ is torsion-free, a result of Swenson \cite[Main Theorem (3)]{Swenson} implies that $H$ acts freely, properly discontinuously, and cocompactly on $Z$. In particular, the quotient $\faktor{Z}{H}$ is aspherical and thus a compact $K(H,1)$.   
    
    Since any $K(H,1)$ is unique up to homotopy equivalence, $\faktor{Z}{H}$ is homotopy equivalent to any other $K(H,1)$, notably the quotient of the Rips complex $P_d(H)$ by $H$ for $d\gg 1$. Taking universal coverings, this homotopy equivalence lifts to a proper, $H$-equivariant homotopy equivalence between $Z$ and $P_d(H)$. 

    Choosing a basepoint $z_0\in Z$, the orbit map $H\rightarrow H \cdot z_0$ identifies the boundary at infinity of $Z$ (regarded as a $\delta$-hyperbolic space) with $\Lambda$. Since $Z$ is an ANR, we can apply {\cite[Theorem 2.4]{BartelsLuckWeinberger10}} to obtain that $\Lambda$ is $k$-LCC in $\overline{Z}$. Recalling that $Z$ was either $Z^+$ or $Z^-$, we see that $\Lambda$ is $k$-LCC in both $\overline{Z^{+}}$ and $\overline{Z^{-}}$. By {\cite[Theorem 7.6.5]{DavermanVenema09}}, this implies that $\Lambda$ is locally flat in $\overline{Z^{+}} \cup \overline{Z^{-}} = \partial_{\infty} G$. The generalised Schoenflies theorem now implies that the embedding $\Lambda \hookrightarrow \partial_{\infty} G$ is conjugate by a homeomorphism to a standard embedding $S^{n-2} \hookrightarrow S^{n-1}$ by Remark \ref{rem:schoenflies.implies.standard}.
\end{proof}

Suppose now that $G\cong \pi_1(M)$ for some closed, orientable, aspherical manifold $M$. We now apply the above criterion to give an alternative  characterization of local flatness of $\dinf H$ in terms of the cover of $M$ corresponding to $H$. In this setting, we refer to $H$ as \emph{2-sided} if the action of $H$ on $\dinf G$ preserves both components of $\dinf G\setminus \dinf H$.

\begin{theorem}\label{thm:ProductCover}
     Let  $M$ be a closed, orientable, aspherical $n$-manifold with $G=\pi_1(M)$ hyperbolic, $\dinf G\cong S^{n-1}$ and $n\geq 6$. Suppose $H\leq G$ is a quasi-convex, codimension-1, 2-sided subgroup such that $\dinf H\cong S^{n-2}$. If $C_H\rightarrow M$ denotes the cover associated to $H$,  then $\dinf H\subset \dinf G$ is locally flat if and only if there exists a closed, orientable $(n-1)$-manifold $N$ such that $C_H\cong N\times \R$.
\end{theorem}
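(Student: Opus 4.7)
The plan is to prove both implications, with the Flatness Criterion (Proposition \ref{prop:Flatness-Criterion}) serving as the bridge to control the topology of the complementary components $Z^{\pm}$ of $\dinf G\setminus\dinf H$.

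For the forward implication (local flatness $\Rightarrow$ product), I would start from local flatness and apply the generalized Schoenflies theorem to get $\overline{Z^\pm}\cong D^{n-1}$; in particular each $Z^{\pm}\cong\R^{n-1}$ is contractible, so Swenson's theorem gives a free, properly discontinuous, cocompact $H$-action on each $Z^{\pm}$, and the quotients $N^\pm := Z^\pm/H$ are closed orientable aspherical $(n-1)$-manifolds with $\pi_1\cong H$. Since both are $K(H,1)$-spaces of dimension $n-1\geq 5$ with $H$ hyperbolic, Corollary \ref{cor:Hyperbolic-Borel-Except-4} forces $N^+\cong N^-=:N$. Next I would build a manifold compactification $\overline{C_H}$ of $C_H$ with boundary $N\sqcup N$, essentially as the $H$-quotient of $\widetilde{M} \cup \overline{Z^+}\cup \overline{Z^-}$; local flatness of $\Lambda$ together with Bestvina--Mess $\mathcal{Z}$-compactification theory should guarantee a manifold-with-boundary structure near each $\overline{Z^\pm}$. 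Both inclusions $N\hookrightarrow \overline{C_H}$ induce isomorphisms on $\pi_1$ between $K(H,1)$-spaces, so each is a homotopy equivalence; hence $\overline{C_H}$ is an $h$-cobordism from $N$ to itself. Vanishing of $\mathrm{Wh}(H)$ (from the Farrell--Jones conjecture for hyperbolic groups) and the $s$-cobordism theorem (applicable since $n-1\geq 5$) then give $\overline{C_H}\cong N\times [0,1]$, whence $C_H\cong N\times\R$.

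For the converse (product $\Rightarrow$ local flatness), by the Flatness Criterion it suffices to show that $Z^+$ and $Z^-$ are simply connected. Since $\dinf H \cong S^{n-2}$ is simply connected and $H$ is hyperbolic, the universal cover $\widetilde N$ is simply connected at infinity; being a contractible open $(n-1)$-manifold of dimension $\geq 5$, Stallings' theorem yields $\widetilde N\cong \R^{n-1}$. Thus $\widetilde M\cong \widetilde N\times\R$ topologically. I would then identify the visual boundary in $\dinf G$ of each half-space $\widetilde N\times[0,\pm\infty)$ with $\overline{Z^\pm}$, noting that sequences escaping along the $\pm\mathbb{R}$ direction must accumulate outside $\Lambda$, while $H$-cocompactness in the $\widetilde N$-direction sweeps out all of $Z^\pm$. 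Since each half-space is contractible, Bestvina--Mess $\mathcal{Z}$-set theory implies that $\overline{Z^\pm}$ has trivial shape; as $Z^\pm$ is an open subset of $S^{n-1}$, the shape-theoretic conclusion upgrades to $\pi_1(Z^\pm)=1$, and the Flatness Criterion delivers local flatness.

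The main obstacle is producing $\overline{C_H}$ as an honest topological manifold with boundary in the forward direction. Away from $\Lambda$ the Bestvina--Mess $\mathcal{Z}$-structure near $Z^\pm$ provides the required local manifold-with-boundary picture, but near $\Lambda$ one must use local flatness to match collars of $\overline{Z^\pm}$ with a neighborhood of $\Lambda$ in the ambient compactification of $\widetilde M$. An attractive alternative is to bypass the compactification construction entirely and apply Siebenmann's characterization of open manifolds of the form $N\times\R$ (valid in dimensions $\geq 6$) directly to $C_H$, verifying the end-tameness hypotheses using the Schoenflies structure of $\overline{Z^\pm}$ together with the $H$-action to identify both ends with $N$.
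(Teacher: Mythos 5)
Your forward direction is essentially the paper's argument, but the obstacle you flag is not actually there. The paper takes $\widetilde V=\widetilde M\cup Z^+\cup Z^-$, which is $D^n\setminus\Lambda$ once one knows (from Bartels--L\"uck--Weinberger) that $\widetilde M\cup\dinf G\cong D^n$; removing a closed subset of the boundary sphere from a manifold with boundary yields another manifold with boundary, so no local flatness and no $\mathcal{Z}$-set analysis near $\Lambda$ are needed to see that $V=\widetilde V/H$ is a compact manifold with $\interior(V)=C_H$ and $\partial V=W^+\sqcup W^-$. Local flatness is used only to make $Z^\pm$ contractible so that $W^\pm$ are $K(H,1)$'s, after which your $h$-cobordism plus $\mathrm{Wh}(H)=0$ plus the $s$-cobordism theorem conclusion is exactly what the paper does. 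Your extra appeal to the Borel conjecture to get $N^+\cong N^-$ in advance is harmless but redundant, since $V\cong W^+\times[0,1]$ already delivers $W^+\cong W^-$.

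Your converse direction takes a genuinely different route, and it has real gaps. The homeomorphism $\widetilde M\cong\widetilde N\times\R$ is purely topological and carries no a priori relation to the Gromov boundary or to the $H$-action, so the assertion that the half-space $\widetilde N\times[0,\infty)$ has ``visual boundary'' $\overline{Z^+}$ is unsupported: sequences escaping in the $\R$-direction need not accumulate outside $\Lambda$, and nothing constrains how $H$ acts relative to the chosen product splitting. The appeal to ``Bestvina--Mess $\mathcal Z$-set theory'' also does not apply here, because there is no group acting geometrically on the half-space, and even granting that $\overline{Z^\pm}$ had trivial shape this would not yield $\pi_1(Z^\pm)=1$ ($\overline{Z^\pm}$ and $Z^\pm$ are different spaces, and trivial \v{C}ech $\pi_1$ of the closure says nothing direct about the fundamental group of the open set). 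The paper's converse argument is far more elementary and avoids all of this: working in $V$, it uses the product structure $C_H\cong N\times\R$ to build an injective self-map of $V^+$ that squeezes $V^+$ into a collar of $W^+$, hence a retraction $V^+\to W^+$ showing $\iota_*\colon\pi_1(W^+)\to\pi_1(V^+)\cong H$ is injective; combined with the covering exact sequence $1\to\pi_1(Z^+)\to\pi_1(W^+)\to H\to1$ for $Z^+\to W^+$, this forces $\pi_1(Z^+)=1$ (similarly for $Z^-$), and Proposition \ref{prop:Flatness-Criterion} finishes. I would recommend replacing your shape-theoretic argument with this retraction argument.
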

\begin{proof} Let $Z^{\pm}$ denote the two complementary components of $\dinf G\setminus \dinf H$.  Since $M$ is orientable and $H$ is 2-sided, the action of $H$ on $\dinf G$ preserves $Z^+$ and $Z^{-}$ separately, acting on each by orientation-preserving homeomorphisms.  The action of $H$ on $\widetilde{V}=\widetilde{M}\cup Z^{+}\cup Z^{-}$ is free, properly discontinuous and cocompact by \cite{Swenson}.  In particular, $p\colon \widetilde{V}\rightarrow V$ is a covering map onto a compact manifold $V$ whose interior is $C_H$ and whose boundary is  $W^+\sqcup W^-$, where $W^\pm=\faktor{Z^\pm} {H}$ are closed and orientable. Since both $W^+$ and $W^-$ are collared  in $V$, we have that $V\simeq \interior(V) = C_H$. The latter is aspherical and has fundamental group isomorphic to $H$. Hence $V$ is a $K(H,1)$. 

 If $\dinf H$ is locally flat, then $Z^+,Z^-$ are both contractible. Since $H$ acts freely properly discontinuously on $Z^+,Z^-$, this implies $W^+$ and $W^-$ are also $K(H,1)$'s. To verify the inclusions $W^+,W^-\hookrightarrow V$ are both homotopy equivalences, it is enough to check that they induce isomorphisms on $\pi_1$. The inclusions are surjective because $p^{-1}(W^+)=Z^+$ and $p^{-1}(W^-)=Z^-$ are both connected.  They are injective because $Z^+$ and $Z^-$ are simply connected. Thus $V$ is an $h$-cobordism. Since $H$ is torsion-free hyperbolic, the Whitehead group $\textrm{Wh}(H)$ vanishes by \cite{BartelsLuck} so $V$ is an $s$-cobordism. Since $\dim(W^+)\geq 5$, we conclude that $V\cong W^+\times [0,1]$. Thus, $C_H = \interior(V)\cong W^+\times \R$ and we can take $N=W^+$.

Conversely, suppose $C_H\cong N\times \R$ for some closed, orientable manifold $N$. Thus the inclusion $N\hookrightarrow V$ is a homotopy equivalence.  Let $V^+\subset V$ be the submanifold bounded by $N\times \{0\}$ and $W^+$, and let $R\cong W^+\times [0,\varepsilon]$ be a collar neighborhood of $W^+$ in  $V^+$. Since $V^+\setminus W^+\cong N\times [0,\infty)$ and $N$ is compact, there exists an injective map $f\colon V^+\rightarrow V^+$ which is the identity on $W^+$, and whose image lies in $R$. We compose $f$ with the projection of $R$ onto $W^+$ to obtain a retraction $r\colon V^+\rightarrow W^+$.  Therefore, the inclusion $\iota\colon W^+\hookrightarrow V^+$ induces an injection of $\pi_1(W^+)$ into $\pi_1(V^+)\cong \pi_1(N)$. On the other hand, since $H\cong \pi_1(N)$ is the group of deck transformations of the cover $Z^+\rightarrow W^+$ we have a short exact sequence:\[1\rightarrow \pi_1(Z^+)\rightarrow \pi_1(W^+)\xrightarrow{\iota_{*}} \pi_1(N)\rightarrow 1.\] Therefore, $\iota_*$ is an isomorphism and $\pi_1(Z^+)=1$. The same argument shows $\pi_1(Z^-)=1$ as well, whence $\dinf H$ is locally flat by Proposition \ref{prop:Flatness-Criterion}.
\end{proof}

\subsection{Embedded submanifolds from codimension-1 subgroups}
Let $M$ be a closed, orientable aspherical $n$-manifold with cubulated hyperbolic fundamental group $G=\pi_1(M)$.  Suppose there exists a quasi-convex subgroup $H\leq G$ such that $H\cong \pi_1(N)$ for some closed orientable aspherical $(n-1)$-manifold $N$. 

\begin{definition}
    A subgroup $H\leq G$ is \emph{square-root closed} if $g^2\in H$ implies $g\in H$.
\end{definition}

\begin{lemma}\label{lem:Sqrt-Amalgamated-Malnormal}
    Let $G$ be a 1-ended torsion-free hyperbolic group. If  $H\leq G$ is malnormal and quasi-convex, then $H$ is square-root closed.  Moreover, if $H$ is 1-ended and has codimension-1, then $G$ splits as an amalgamated product or HNN extension over $H$. 
\end{lemma}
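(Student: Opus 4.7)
For the square-root closed claim, the argument is immediate from malnormality combined with torsion-freeness of $G$. Suppose $g^{2}\in H$ and $g\notin H$. Since $g$ commutes with $g^{2}$,
\[ g^{2} \;=\; g\, g^{2}\, g^{-1} \;\in\; gHg^{-1} \;=\; H^{g}, \]
so $g^{2}\in H\cap H^{g}$, which is trivial by malnormality. Thus $g^{2}=1$, and torsion-freeness forces $g=1\in H$, contradicting $g\notin H$. Hence $g\in H$.

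For the splitting, my plan is to invoke the Sageev/Dunwoody wallspace machinery for codimension-$1$ subgroups. Because $H$ is codimension-$1$ in $G$, the pair $(G,H)$ has more than one end and yields a non-trivial $H$-almost-invariant subset $A\subseteq G$ corresponding to a component of $\partial_{\infty}G\setminus\partial_{\infty}H$. The associated wallspace $(G,\{gA\}_{g\in G})$ dualizes to a CAT(0) cube complex on which $G$ acts cocompactly (using quasi-convexity of $H$ in the hyperbolic $G$), and the hyperplane stabilizers are commensurable with conjugates of $H$. Since $H$ is infinite and malnormal, its commensurator in $G$ equals $H$ itself, so the hyperplane stabilizers are exactly conjugates of $H$. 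Extracting a $G$-invariant Bass--Serre tree $T$ from this data (equivalently, realizing $H$ as the stabilizer of a Dunwoody track in a presentation $2$-complex of $G$) and applying Bass--Serre theory then produces a splitting of $G$ as either an amalgamated product $A *_{H}B$ or an HNN extension $A *_{H}$.

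The main technical subtlety lies in guaranteeing that the edge stabilizer is literally $H$, and not some proper or merely commensurable subgroup. Malnormality rules out the commensurator ambiguity, as indicated above. One-endedness of $H$ is what rules out an edge stabilizer of the form $K<H$ with $K$ of infinite index: such a splitting of $G$ over $K$ would restrict to a non-trivial action of $H$ on the Bass--Serre tree with $K$ fixing an edge, inducing a splitting of $H$ over $K$ and contradicting $1$-endedness of $H$. Rather than rederiving the cubulation-to-tree reduction by hand, the cleanest implementation is to cite existing results on splittings of hyperbolic groups over malnormal quasi-convex subgroups (e.g.\ Sageev's cubulation theorem, the Scott--Swarup regular neighbourhood theory, or Bowditch's JSJ theory for hyperbolic groups), which package together the above points.
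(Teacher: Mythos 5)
Your square-root closedness argument is correct and essentially identical to the paper's: $g^{2}$ has infinite order by torsion-freeness, lies in $H\cap H^{g}$, and malnormality then forces $g\in H$.

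For the splitting, the paper takes a completely different and much shorter route: it simply cites a theorem of Kropholler (from the Niblo--Roller geometric group theory volume), a Poincar\'e-duality splitting result applicable because in the paper's setting $\dinf G\cong S^{n-1}$ and $H$ is a codimension-one subgroup with sphere boundary. Your cubulation-based sketch is a genuinely different approach, but it has real gaps as written. The central omission is that the Sageev wallspace construction from a single codimension-one subgroup yields a CAT(0) cube complex, not automatically a tree; to obtain a Bass--Serre splitting you must show the walls $\{\,g\cdot\dinf H\,\}_{g\in G}$ are pairwise nested. That nestedness does hold in the paper's setting --- malnormality makes the limit sets of distinct conjugates disjoint, and disjoint $(n-2)$-spheres in $\dinf G\cong S^{n-1}$ nest by Jordan--Brouwer separation --- but you never make this observation, and without it ``extracting a $G$-invariant Bass--Serre tree from this data'' is unjustified. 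Your argument that one-endedness of $H$ rules out edge stabilizers $K<H$ of infinite index is also shaky: an $H$-action on a tree in which some $K<H$ fixes an edge does not by itself produce a splitting of $H$ over $K$, so more is needed to reach a contradiction with one-endedness. Finally, the fallback citations you offer are not the right tools here: Sageev's cubulation theorem gives the cube complex rather than the tree, and Bowditch's JSJ theory for hyperbolic groups concerns splittings over \emph{two-ended} subgroups, whereas $H$ is one-ended. The Kropholler reference the paper uses sidesteps all of these issues.
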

\begin{proof}If  $g^2\in H$ then $\langle g^2\rangle \leq gHg^{-1}\cap H$, and since $g^2$ has infinite order malnormality implies this is only possible if $g\in H$. This proves the first statement. The second statement follows from a result of Kropholler \cite[p.146, Theorem 4.9]{GGTNibloRoller}.
\end{proof}

Our interest in $G$ splitting over a square-root closed subgroup $H$ stems from a theorem of Cappell \cite[Theorem 1]{Cappell} giving homotopical conditions for a manifold $M$ with $\pi_1(M)=G$ to contain an embedded, 2-sided, codimension-1 submanifold $N$ with $\pi_1(N)\cong H$. 

First we introduce some terminology.  Recall that a finite CW-complex is called an orientable \emph{Poincar\'e complex} if its cohomology ring satisfies Poincar\'e duality with respect to the trivial $\Z$-coefficients.  Clearly any space homotopy equivalent to a closed orientable manifold is an orientable Poincar\'e complex. Consider now the following setup.  Let $Y$ be a connected, orientable $(n+1)$-dimensional  Poincar\'e complex and $j\colon X\rightarrow Y$ an embedding of a connected, orientable $n$-dimensional sub-Poincar\'e complex with $n\geq 4$.  Suppose further that  $j_*\colon \pi_1(X)\rightarrow \pi_1(Y)$ is injective. 
\begin{definition}
    Let $X\subset Y$ be as above and let $W$ be a manifold. A homotopy equivalence $f\colon W\rightarrow Y$ is \emph{splittable along $X$} if it is homotopic to a map $g$ which is transverse regular along $X$ (whence $g^{-1}(X)$ is embedded, codimension-1 submanifold of $W$) and if $g$ restricted to both $g^{-1}(X)$ and $g^{-1}(Y\setminus X)$ are homotopy equivalences. If $Y\setminus X$ has two components we require that the restriction of $g$ to each component of $g^{-1}(Y\setminus X)$ be a homotopy equivalence.
\end{definition}

Given a pair $(Y,X)$ of Poincar\'e complexes, a manifold $W$, and a homotopy equivalence $f\colon W\rightarrow Y$, the main result of Cappell \cite[Theorem 1]{Cappell} gives homotopical conditions for $f$ to be splittable. Crucially, one must assume that $\pi_1(X)$ is square-root closed in $\pi_1(Y)$, and that $\pi_1(Y)$ splits over $\pi_1(X)$ either as an amalgamated product or HNN-extension.  The remaining assumptions are $K$-theoretic in nature, and are satisfied vacuously when both $\pi_1(X)$ and $\pi_1(Y)$ are torsion-free hyperbolic. In the setting where $W$ and $X$ are both aspherical manifolds and $\pi_1(X)$ satisfies the Borel conjecture, the conclusion of Cappell's result is that $W$ contains an embedded, 2-sided submanifold homeomorphic to $X$. We now apply Cappell's result to our situation.


\begin{theorem}\label{thm:Finite-Embedded-Cover}
Let  $M$ be a closed, orientable aspherical $n$-manifold with $G=\pi_1(M)$ cubulated hyperbolic and $n\geq 6$. Let $H\leq G$ be a quasi-convex, codimension-1 subgroup such that $H\cong \pi_1(N)$ for some closed aspherical $(n-1)$-manifold $N$.  Then there exist finite covers $ \widehat{M}\rightarrow M$, $ \widehat{N}\rightarrow N$ and an embedding $ \widehat{N}\hookrightarrow \widehat{M}$.
\end{theorem}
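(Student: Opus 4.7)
The strategy is to pass to a finite cover in which $H$ becomes malnormal, extract a splitting of the covering group over $\widehat{H}$, and then apply Cappell's splitting theorem together with the Borel conjecture to realize $\widehat{N}$ as an embedded codimension-$1$ submanifold of $\widehat{M}$. Since $G$ is cubulated hyperbolic, Agol's theorem gives that $G$ is virtually compact special, hence QCERF, and quasi-convex subgroups are therefore separable. Combining separability of $H$ with the fact that only finitely many (up to conjugacy) infinite intersections $H \cap gHg^{-1}$ occur, we pass to a torsion-free, finite-index subgroup $\widehat{G} \leq G$ in which $\widehat{H} := H \cap \widehat{G}$ is malnormal. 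Let $\widehat{M} \to M$ and $\widehat{N} \to N$ be the associated covers; both are closed aspherical manifolds (orientable after a further index-$2$ refinement if needed) with $\pi_1(\widehat{M}) = \widehat{G}$ and $\pi_1(\widehat{N}) = \widehat{H}$, and $\widehat{H}$ remains quasi-convex and codimension-$1$ in $\widehat{G}$. Since $\widehat{M}$ and $\widehat{N}$ are closed aspherical manifolds of dimensions $n \geq 6$ and $n-1 \geq 5$ respectively, both $\widehat{G}$ and $\widehat{H}$ are one-ended. Lemma \ref{lem:Sqrt-Amalgamated-Malnormal} then yields that $\widehat{H}$ is square-root closed in $\widehat{G}$ and that $\widehat{G}$ splits over $\widehat{H}$ as either an amalgamated product $\widehat{G} = A *_{\widehat{H}} B$ or an HNN extension.

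From this splitting we construct a codimension-$1$ Poincar\'e complex pair $(Y, X)$ of dimension $n$ with $X = \widehat{N}$ and $\pi_1(Y) = \widehat{G}$: in the amalgamated case take $Y = K(A,1) \cup_{\widehat{N}} K(B,1)$, with $\widehat{N}$ embedded as a two-sided separating subcomplex via the inclusions $\widehat{H} \hookrightarrow A, B$, and analogously in the HNN case. Because both $Y$ and $\widehat{M}$ are $K(\widehat{G}, 1)$ spaces, there is a homotopy equivalence $f : \widehat{M} \to Y$, and $Y$ inherits from $\widehat{M}$ the structure of an $n$-dimensional orientable Poincar\'e complex. Cappell's splitting theorem \cite{Cappell} now applies: its group-theoretic hypotheses are furnished by the splitting and the square-root closedness, while the $K$-theoretic obstructions in $\mathrm{Wh}$ and $\widetilde{K}_0$ vanish for the torsion-free hyperbolic groups $\widehat{G}, A, B, \widehat{H}$ (here $A$ and $B$, being one-edge Bass--Serre vertex groups over the quasi-convex $\widehat{H}$, are themselves quasi-convex and hence hyperbolic) by the Farrell--Jones conjecture of Bartels--L\"uck \cite{BartelsLuck}. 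Consequently $f$ is homotopic to a map $g : \widehat{M} \to Y$ for which $g^{-1}(X) \subset \widehat{M}$ is an embedded, two-sided, codimension-$1$ submanifold, and $g|_{g^{-1}(X)} : g^{-1}(X) \to X$ is a homotopy equivalence.

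Finally, $g^{-1}(X)$ is a closed aspherical $(n-1)$-manifold with fundamental group $\widehat{H} \cong \pi_1(\widehat{N})$. Since $n - 1 \geq 5$ and $\widehat{H}$ is torsion-free hyperbolic, Corollary \ref{cor:Hyperbolic-Borel-Except-4} implies that the homotopy equivalence $g^{-1}(X) \to \widehat{N}$ is homotopic to a homeomorphism; composing with $g^{-1}(X) \hookrightarrow \widehat{M}$ produces the desired embedding $\widehat{N} \hookrightarrow \widehat{M}$. The main obstacle I anticipate is the construction of the Poincar\'e pair $(Y, X)$: one must faithfully realize the algebraic splitting as a codimension-$1$ Poincar\'e substructure satisfying the two-sidedness and normal-data conditions required by Cappell's framework (in particular ensuring the vertex pieces $K(A,1)$ and $K(B,1)$ assemble to a Poincar\'e complex homotopy equivalent to $\widehat{M}$). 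Once this is in place, the $K$-theoretic vanishing of Bartels--L\"uck makes Cappell's theorem applicable, and Borel rigidity promotes the resulting homotopy equivalence to the desired homeomorphism.
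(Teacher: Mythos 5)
Your proposal is correct and follows essentially the same route as the paper's proof: pass to a finite cover so $\widehat H$ is malnormal, use Lemma \ref{lem:Sqrt-Amalgamated-Malnormal} for square-root closedness and a splitting of $\widehat G$ over $\widehat H$, build the Poincar\'e pair $(Y,\widehat N)$ from classifying spaces of the vertex groups, apply Cappell's splitting theorem (with $K$-theoretic obstructions vanishing by Bartels--L\"uck), and then upgrade the split-off manifold to $\widehat N$ via Borel rigidity. The only cosmetic differences are that the paper cites Theorem \ref{thm:Markovic-Malnormal} for the malnormalization step where you unpack the QCERF argument directly, and the paper uses Bowditch's result that vertex stabilizers of the splitting are quasi-convex (hence hyperbolic) to ensure they have finite classifying spaces, which you implicitly rely on as well.
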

\begin{proof}
By Theorem \ref{thm:Markovic-Malnormal}, there exist finite index subgroups $\widehat G\leq G$ and $\widehat H\leq H$ such that $\widehat H\leq \widehat G$ is a malnormal subgroup. Let $\widehat M$ and $\widehat N$ be the corresponding finite covers of $M$ and $N$, respectively. By Lemma \ref{lem:Sqrt-Amalgamated-Malnormal}, $\widehat H$ is square-root closed and $\widehat G$ splits over $\widehat H$ as either an amalgamated product or HNN extension. In other words, $\widehat{G}$ acts on a tree with edge stabilizers conjugate to $\widehat H$ and a single edge orbit. Since $\widehat H$ is quasi-convex, so are the vertex stabilizers \cite[Proposition 1.2]{Bowditch-Canonical-Splittings}. In particular, being torsion-free hyperbolic, they have finite-dimensional classifying spaces (e.g. the quotient of the Rips complex).  

We can then build a finite-dimensional classifying space for $\widehat G$ from classifying spaces for the vertex stabilizers and $\widehat N$.  Explicitly, if $\widehat G=G_0*_{\widehat H}G_1$, then let $X_0$ be a $K(G_0,1)$ and $X_1$ be a $K(G_1,1)$. Now we obtain $X_{\widehat G}$ as the identification space from $X_1$, $X_2$ and $\widehat{N}\times[0,1]$ by gluing $\widehat{N}\times \{i\}$ to $X_i$ by a map inducing the inclusion $\widehat H\hookrightarrow G_i$, for $i=0,1.$ The construction for an HNN extension is similar. $X_{\widehat G}$ is a Poincar\'e duality complex since is it homotopy equivalent to $\widehat M$, and it contains $\widehat N$ as an embedded, 2-sided submanifold. 

Since all groups involved are torsion-free hyperbolic, their Whitehead groups vanish, hence condition (ii) in Theorem 1 of \cite{Cappell} is automatically satisfied. Thus, all hypotheses of Cappell's theorem are satisfied and any  homotopy equivalence $f\colon \widehat M\rightarrow X_{\widehat G}$ is splittable. In particular, we find an embedded, 2-sided, codimension-1 manifold $N'\hookrightarrow \widehat M$ such that $\pi_1(N')$ injects as $\widehat H\leq \widehat G$. Finally, since $\dim(\widehat N)\geq 5$,  we have that $N'$ is homeomorphic to $\widehat N$ by Theorem \ref{thm:Hyperbolic-Borel-Conjecture}.
\end{proof}

\appendix

\section{Proofs of Lemma \ref{lem:refinements.and.stabilizers} and Lemma \ref{lem:inheritance.of.convergence.property}}\label{sec:Appendix}

\begin{lemma}[Lemma \ref{lem:refinements.and.stabilizers}]
    Let $(\mu, K, \mathcal{U})$ be a refinement of $(\mu', K', \mathcal{U}')$ and suppose $U \subset U'$, where $U \in \mathcal{U}$ and $U' \in \mathcal{U}'$. Then $\Stab_{\mu}(U) < \Stab_{\mu'}(U')$.
\end{lemma}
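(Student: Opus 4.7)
The plan is to prove the contrapositive: assume $g \notin \Stab_{\mu'}(U')$ and deduce $g \notin \Stab_{\mu}(U)$. Since $\mu = \mu'$ on $K'$ by the definition of refinement, the hypothesis gives $\mu(g)(\partial U') = \partial V'$ for some $V' \in \mathcal{U}'$ with $\partial V' \neq \partial U'$. When $U = U'$ this immediately yields $\mu(g)(\partial U) \neq \partial U$, so the real work lies in the case $U \subsetneq U'$, which I would handle by contradiction, assuming $g \in \Stab_\mu(U)$.

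My strategy is a path-tracking argument. First I would choose a point $p_\infty \in \partial U'$ whose image under $\mu(g)$ lies in $\partial V' \setminus \partial U'$, together with a path $\gamma$ from a point $p \in U$ to $p_\infty$ meeting $\partial U'$ only at its endpoint — the homeomorphism $\Phi^{U'}:(D^n, S^{n-1}) \to (\overline{U'}, \partial U')$ provides such a $\gamma$ after transport from a radial segment in $D^n$. Then I would decompose $\gamma$ with respect to the finer cell decomposition $(K, \mathcal{U})$ into a (possibly infinite) sequence of subpaths passing through cells $U = U_0, U_1, U_2, \ldots \in \mathcal{U}$ separated by arcs $S_0, S_1, \ldots$ lying in $K$.

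The crux is an induction showing that $\mu(g)(U_i) \subset U'$ and $\mu(g)(S_i) \subset U'$ for every $i$ (with the last $S_i$, if it exists, possibly landing in $\overline{U'}$). The base case is exactly the contradictory hypothesis $g \in \Stab_\mu(U)$, which gives $\mu(g)(U_0) = U \subset U'$. The inductive step rests on three facts: $\mu(g)$ maps $K'$ homeomorphically onto $K'$; the interior of each $S_i$ avoids $\partial U' \subset K'$ by the construction of $\gamma$; and any element of $\mathcal{U}$ whose closure meets $U'$ is actually contained in $U'$, since every element of $\mathcal{U}$ is contained in some element of $\mathcal{U}'$ (as $K' \subset K$) together with Lemma \ref{lem:action.on.cells.well-defined}. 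Once the induction is in place, I would finish the proof in two sub-cases. If the chain is finite, ending in $S_M$, then $\mu(g)(p_\infty)$ is forced into $\overline{U'}$ by continuity on the last segment. If the chain is infinite, the endpoints $q_i$ of the segments $S_i$ converge to $p_\infty$, and by continuity of $\mu(g)$ and closedness of $\overline{U'}$, their images converge into $\overline{U'}$. Either way, one obtains $\mu(g)(p_\infty) \in \overline{U'}$, contradicting $\mu(g)(p_\infty) \in \partial V' \setminus \partial U'$.

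The main obstacle will be the infinite-chain case: the conclusion there is not purely inductive but requires a limiting argument leveraging compactness of $D^n$ and closedness of $K'$ to prevent the image of $p_\infty$ from escaping $\overline{U'}$. A secondary technical point is the construction of $\gamma$ so that it meets $\partial U'$ exactly once; pulling back via $\Phi^{U'}$ reduces this to choosing a radial path from an interior preimage of $p$ to the preimage of $p_\infty$ in $S^{n-1}$.
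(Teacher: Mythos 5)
Your proposal matches the paper's proof essentially line for line: the contrapositive reduction, the contradiction argument for $U \subsetneq U'$, the path $\gamma$ built via $\Phi^{U'}$, the decomposition into cells $U_i$ and arcs $S_i \subset K$, the induction that $\mu(g)$ keeps each piece inside $U'$, and the split into finite-chain and infinite-chain endgames. The details you flag as obstacles (the limiting argument in the infinite case and the construction of $\gamma$ meeting $\partial U'$ only once) are handled exactly as you propose.
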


\begin{proof}
    Suppose $g \notin \Stab_{\mu'}(U')$, that is there exists some $V' \in \mathcal{U}'$ such that $\partial U' \neq \partial V'$ and $\mu'(g)(\partial U') = \partial V'$. Since $\mu$ and $\mu'$ coincide on $K'$, we conclude that $\mu(g)(\partial U') = \partial V'$. If $U = U'$, then this implies that $\mu(g)$ does not preserve $\partial U$ and $g \notin \Stab_{\mu}(U)$.
    
    Now suppose $U \subsetneq U'$ and suppose by contradiction that $g \in \Stab_{\mu}(U)$. Since $\mu(g)(\partial U') = \partial V'$ and $\partial U' \neq \partial V'$, there exists some $p_{\infty} \in \partial U'$ such that
    \[ \mu(g)(p_{\infty}) = \mu'(g)(p_{\infty}) \in \partial V' \setminus \partial U'. \]
    Choose a point $p \in U$ and a path $\gamma$ from $p$ to $p_{\infty}$ that meets $\partial U'$ only in its endpoint $p_{\infty}$. (Such a path may be found by exploiting the homeomorphism $\Phi^{U'} : (D^{n}, S^{n-1}) \rightarrow (\overline{U'}, \partial U')$.) The path $\gamma$ passes through a sequence of elements $U = U_0, U_1, U_2, \dots \in \mathcal{U}$. In between these open sets $\gamma$ meets $K$ in a sequence of path-segments $S_0, S_1, \dots \subset K \cap U'$, where $S_i$ is the segment of $\gamma$ between its time in $U_i$ and its time in $U_{i+1}$. We obtain a decomposition of the path $\gamma$ into segments that are contained in $U_0, S_0, U_1, S_1, \dots$ in that order. If this sequence is finite, then it ends in $S_M$ for some $M$. However, this sequence may go on forever. We denote the start and end point of $S_i$ by $p_i$ and $q_i$ respectively.\\
    
    Let $V_i := \mu(g)(U_i)$ and $T_i := \mu(g)(S_i)$. We will prove by induction that $V_i, T_i \subset U'$ for all $i$ (except for the last $T_M$ which, if it exists, will be contained in $\overline{U'}$). The induction starts with $V_0$. Since we assume, by contradiction, that $g \in \Stab_{\mu}(U)$, we have that $V_0 = \mu(g)(U_0) = \mu(g)(U) = U$ which is a subset of $U'$ by assumption.
    
    Now suppose $\mu(g)(U_i) \subset U'$ and suppose $S_i$ is not the last segment in the sequence $U_0, S_0, \dots$. Since $\gamma$ meets $\partial U'$ only in its endpoint and $S_i$ is not the last segment, $p_i \in U'$ and thus $p_i \in \partial U_i \cap U'$. By induction assumption, $\mu(g)(U_i) \subset U'$ and thus $\mu(g)(p_i) \in \mu(g)(\overline{U_i}) \subset \overline{U'}$. Since $p_i \notin K'$, $\partial U' \subset K'$, and $\mu(g)$ sends $K'$ homeomorphically to itself, we conclude that $\mu (g)(p_i) \in U'$.
    
    Since $\gamma$ meets $\partial U'$ only in its endpoint and $S_i$ is not the last segment, we have that $S_i \cap K' = \emptyset$. Since $\mu(g)$ sends $K'$ homeomorphically to itself, we conclude that $T_i \cap K' = \emptyset$. Combined with the fact that the starting point $\mu(g)(p_i)$ of $T_i$ lies in $U'$, we conclude that $T_i \subset U'$.

    We now use this to show that $V_{i+1} \subset U'$. Since $T_i \subset U'$, we have that $\mu(g)(q_i) \in U'$. At the same time, $\mu(g)(q_i) \in \partial U_{i+1}$ and thus $\mu(g)(q_i) \in \mu(g)(\overline{U_{i+1}}) \cap U' = \overline{V_{i+1}} \cap U'$. Since every element in $\mathcal{U}$ is contained in an element of $\mathcal{U}'$ and $\overline{V_{i+1}} \cap U' \neq \emptyset$, we conclude that $V_{i+1} \subset U'$.

    This induction shows that $V_i$ and $T_i$ are contained in $U'$ for all $i$ (except for $T_M$, if it is the last segment of the sequence). Depending on whether the sequence $U_0, S_0, \dots$ is finite or infinite, we now finish the proof in two different ways.
    
    We first deal with the case where the sequence is finite and thus we have a segment $T_M$. By induction, we know that $V_M \subset U'$. By the same argument as before, we see that $p_M$ either lies in $U'$, or is the endpoint of $\gamma$. If it is the endpoint of $\gamma$, then $p_M \in \overline{U_{M}}$ and thus $\mu(g)(p_M) \in \mu(g)(\overline{U_{M}}) \subset \overline{U'}$. Now suppose $p_M$ is not the endpoint of $\gamma$. In that case, $S_M$ is a segment that starts in $U'$, that only meets $\partial U' \subset K'$ in its endpoint, and whose starting point is sent to a point in
    \[ \mu(g)(\overline{U_M} \cap U') \subset \overline{U'} \setminus K' = U'. \]
    We conclude that $T_M$ is contained in $U'$ except for its endpoint which lies in $\partial U'$. Since the endpoint of $T_M$ is the endpoint of $\gamma$, which is $p_{\infty}$, this implies that $\mu(g)(p_{\infty}) \in \partial U'$. However, we chose $p_{\infty}$ so that $\mu(g)(p_{\infty}) \notin \partial U'$, a contradiction.

    Now suppose the sequence $U_0, S_0, \dots$ is infinite. In that case, $V_i, T_i$ are contained in $U'$ for every $i$. Since $\gamma$ is continuous, the endpoints $q_i$ of the segments $S_i$ converge to the point $p_{\infty}$. Since $\mu(g)$ is continuous, we conclude that
    \[ \mu(g)(q_i) \rightarrow \mu(g)(p_{\infty}) \in \partial V' \setminus \partial U'. \]
    However, $\mu(g)(q_i) \in U'$ for all $i$ and thus their limit has to lie in $\overline{U'}$. This is a contradiction. Since we obtain a contradiction in both cases, we conclude that $g$ cannot lie in $\Stab_{\mu}(U) \setminus \Stab_{\mu'}(U')$, which completes the proof.
\end{proof}

\begin{lemma}[Lemma \ref{lem:inheritance.of.convergence.property}]
    Let $(\mu, K, \mathcal{U})$ be a $G$-complex which is a refinement of a convergence $G$-complex $(\mu', K', \mathcal{U}')$. 
    If for every $U' \in \mathcal{U}'$, the group $\mu(\Stab_{\mu'}(U'))$ is a convergence group on $\overline{U'} \cap K$, then $(\mu, K, \mathcal{U})$ is a convergence $G$-complex as well.
\end{lemma}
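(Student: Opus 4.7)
The plan is to start with an arbitrary sequence $(g_m)_m$ of distinct elements in $G$ and show it has a subsequence converging uniformly on compact subsets of $K \setminus \{b\}$ to some point $a \in S^{n-1}$.

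First, since $(\mu', K', \mathcal{U}')$ is a convergence $G$-complex, we may pass to a subsequence and find $a, b \in S^{n-1}$ such that $\mu'(g_m) \to a$ uniformly on compact subsets of $K' \setminus \{b\}$ (and symmetrically $\mu'(g_m^{-1}) \to b$ uniformly on compact subsets of $K' \setminus \{a\}$). Since $\mu = \mu'$ on $K'$, we already have this convergence on $K'$; the task is to upgrade it to $K = K' \sqcup \bigsqcup_{V \in \mathcal{U}'} (K \cap V)$.

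Second, I would construct a nice exhaustion $\{C_\alpha\}_{\alpha \in (0,1)}$ of $K \setminus \{b\}$ by compact sets chosen so that whenever $C_\alpha \cap V \neq \emptyset$ for $V \in \mathcal{U}'$, there is a point of $\partial V \cap \mathrm{int}(D^n)$ lying in $C_\alpha$. (For example, $C_\alpha := (K \cap B_\alpha(0)) \cup (S^{n-1} \setminus \mathrm{int}(B_{1-\alpha}(b)))$ works for all $\alpha$ close enough to $1$ once we handle the degenerate case $K' = S^{n-1}$ separately using the hypothesis directly.) Fix $\epsilon > 0$. By the third axiom of a generalized cell decomposition (equivalently $(\star)$), only finitely many $W_1, \ldots, W_L \in \mathcal{U}'$ have diameter $\geq \epsilon/2$.

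Third, I split the $V \in \mathcal{U}'$ meeting $C_\alpha$ into two types. For the ``generic'' $V$ where $\mu'(g_m)(V) \notin \{W_1, \ldots, W_L\}$, we have $\mathrm{diam}(\mu(g_m)(\overline{V})) < \epsilon/2$; combined with the fact that at least one point of $\partial V$ (which lies in $K'$) is mapped $\epsilon/2$-close to $a$, we get $\mu(g_m)(\overline{V} \cap C_\alpha) \subset B(a, \epsilon)$ for $m$ large. For the ``bad'' case $\mu(g_m)(V) = W_i$, I would use the backward convergence $\mu(g_m^{-1}) \to b$ together with $(\star)$ applied in reverse to show that only finitely many $V$'s, say $V_1, \ldots, V_M$, can realize this for $m$ large enough.

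Fourth (the main obstacle), for these finitely many bad pairs, I partition the sequence into sub-sequences $(g_m^{i,j})$ indexed by $\mu(g_m)(V_j) = W_i$ and pre-compose with $g_{m_{i,j}}^{-1}$, where $m_{i,j}$ is the first index in that sub-sequence. This produces elements $h_m^{i,j} \in \mathrm{Stab}_{\mu'}(W_i)$. By hypothesis, $\mu(\mathrm{Stab}_{\mu'}(W_i))$ is a convergence group on $\overline{W_i} \cap K$, so a further subsequence converges uniformly on compact subsets of $(\overline{W_i} \cap K) \setminus \{b'\}$ to some $a' \in \overline{W_i} \cap K$. The delicate point is to identify $a' = a$ and $b' = \mu(g_{m_{i,j}})(b)$: this follows because $\partial W_i \subset K'$ contains infinitely many points, and on $\partial W_i \setminus \{b'\}$ the limit is already known to be $a$ from the convergence on $K'$. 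Points of $W_i \cap K$ coming from $C_\alpha$ cannot accumulate at $\mu(g_{m_{i,j}})(b)$ (since $b \notin C_\alpha$ and $C_\alpha$ is closed), so uniform convergence to $a$ holds on the relevant piece.

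Finally, combining these uniform estimates over the generic $V$'s and the finitely many bad ones shows $\mu(g_m)(C_\alpha) \subset B(a, \epsilon)$ for $m$ sufficiently large, which gives uniform convergence on $C_\alpha$. Since $\{C_\alpha\}$ exhausts $K \setminus \{b\}$ and any compact subset of $K \setminus \{b\}$ lies in some $C_\alpha$, we conclude $(\mu, K, \mathcal{U})$ is a convergence $G$-complex. The principal difficulty is the bookkeeping in step four, namely matching the attractor/repeller of the stabilizer-convergence with the global $a, b$ using the overlap on $\partial W_i \subset K'$.
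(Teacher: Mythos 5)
Your proposal mirrors the paper's proof step by step: the same exhaustion $C_\alpha$ of $K \setminus \{b\}$ chosen so that cells meeting $C_\alpha$ touch $\partial V \cap \interior(D^n)$, the same reduction via property $(\star)$ to finitely many large cells $W_i$ and finitely many bad preimages $V_j$, the same pre-composition trick producing elements of $\Stab_{\mu'}(W_i)$, and crucially the same identification of the local attractor/repeller $(a',b')$ with the global $(a, b_i)$ by exploiting the overlap on $\partial W_i \subset K'$ (which has infinitely many points). This is essentially the paper's own argument, and it is correct.
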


\begin{proof}
    Let $g_m \in G$ be a sequence of distinct elements. Since $(\mu', K', \mathcal{U}')$ is a convergence $G$-complex, there exist $a, b \in S^{n-1}$ and a subsequence of $g_m$ (which we denote by $g_m$ again) such that $\mu'(g_m) \rightarrow a$ uniformly on compact subsets of $K' \setminus \{ b \}$. This implies that, for the inverse sequence, we have $\mu'(g_m)^{-1} = \mu'(g_m^{-1}) \rightarrow b$ uniformly on compact subsets of $K' \setminus \{ a \}$.

    \subsubsection*{Step 1: Using the convergence action on $K'$ to get parts of the convergence behavior on $K$.}

    Since the actions of $\mu$ and $\mu'$ coincide on $K' \subset K$, we conclude that $\mu(g_m) \rightarrow a$ and $\mu(g_m)^{-1} \rightarrow b$ on compact subsets of $K' \setminus \{ b \}$ and $K' \setminus \{ a \}$ respectively. We thus obtain the following two properties: For every compact set $C \in K \setminus \{ b \}$ and every $\epsilon > 0$, there exists $m'(\epsilon, C) \in \mathbb{N}$ such that
    \begin{equation} \label{eq:forward.contraction}
        \forall m \geq m'(\epsilon, C) : d( \mu(g_m)( C \cap K' ), a) < \epsilon.
    \end{equation}
    Similarly, for every compact set $D \in K \setminus \{ a \}$ and every $\delta > 0$, there exists $m''(\delta, D)$ such that
    \begin{equation} \label{eq:backward.contraction}
        \forall m \geq m''(\delta, D) : d(\mu(g_m^{-1})( D \cap K' ), b) < \delta.
    \end{equation}
    We need to show that for every compact set $C \in K \setminus \{ b \}$ and every $\epsilon > 0$, there exists $m(\epsilon, C)$ such that
    \[ 
    \forall m \geq m(\epsilon, C) : d(\mu(g_m)(C), a) < \epsilon. \]

    \subsubsection*{Step 2: Exhausting $K \setminus \{ b \}$ by well-behaved compact sets.}

    For $\alpha \in (0,1)$, we define
    \[ C_{\alpha} := (K \cap B_{\alpha}(0)) \cup (S^{n-1} \setminus \interior(B_{1-\alpha}(b)) \]
    where $B_r(x)$ denotes the closed ball in $D^n$ of radius $r$ with respect to the euclidean metric centered at $x$ and $\interior(B_r(x))$ its interior.
    
    The collection $C_{\alpha}$ is an exhaustion of $K \setminus \{ b \}$ by compact sets. We may thus assume without loss of generality that $C = C_{\alpha}$ for some $\alpha \in (0, 1)$. Note that, for $1-\alpha$ sufficiently small, $C_{\alpha}$ has the following property: If $C_{\alpha} \cap U' \neq \emptyset$ for some $U' \in \mathcal{U}'$, then either $C_{\alpha} \cap \partial U' \cap \interior(D^{n}) \neq \emptyset$ or $U' = \interior(D^{n})$ and $\partial U' = S^{n-1}$. (This property is crucial to the proof and we highlight that it holds for $D^{n}$ for the same reason as it does for $D^3$.) If there exists $U' \in \mathcal{U}'$ such that $U' = \interior(D^n)$, then $K' = S^{n-1}$ and $(\mu, K, \mathcal{U}$ is a convergence $G$-complex by the assumption in the Lemma. We thus assume from now on that $K' \neq S^{n-1}$. We define compact sets $D_{\alpha}$ that form an exhaustion of $K \setminus \{ a \}$ in the analogous way.\\

    Let $\epsilon > 0$. Since $(K', \mathcal{U}')$ satisfies $(\star)$ (see Remark \ref{rem:condition.star}), there are only finitely many sets $W_1, \dots, W_L \in \mathcal{U}'$ whose diameter is $\geq \frac{\epsilon}{2}$.

    \subsubsection*{Step 3: There exist $M(\alpha), N(\epsilon, \alpha) \in \mathbb{N}$ and sets $V_j \in \mathcal{U}'$, where $j \in \{ 1, \dots, M(\alpha) \}$ such that the following holds:} If $V \in \mathcal{U}'$ satisfies
        \begin{itemize}
            \item $\overline{V} \cap C_{\alpha} \neq \emptyset$,
            \item there exists $m > N(\epsilon, \alpha)$ such that $\mu(g_m)(V) = W_i$ for some $1 \leq i \leq L$,
        \end{itemize}
    then $V = V_j$ for some $j$.\\

    By construction, $C_{\alpha}$ does not intersect the open ball in $D^{n}$ of radius $1-\alpha$ centered at $b$. Put $\delta := 1 - \alpha$. We obtain the sets $V_i$ as follows: Since $(K', \mathcal{U}')$ satisfies $(\star)$, there exist only finitely many sets $V_1, \dots, V_M \in \mathcal{U}'$ with diameter $\geq \frac{\delta}{2}$. We set the number of these sets to be $M = M(\alpha)$.

    Choose $\alpha'$ such that $D_{\alpha'}$ intersects each $W_i$ (thus $\alpha' = \alpha'(\epsilon)$, as the $W_i$ depend only on $\epsilon$). Furthermore, choose $\alpha'$ large enough such that, whenever $D_{\alpha'} \cap U' \neq \emptyset$, then $D_{\alpha'} \cap \partial U' \cap \interior(D^{n}) \neq \emptyset$. (This depends only on the generalized cell decomposition $(K', \mathcal{U}')$ and the fact that we already dealt with the case $K' = S^{n-1}$.) Set $D := D_{\alpha'}$. By equation (\ref{eq:backward.contraction}), we know that for every $m > m''(\frac{\delta}{2}, D)$, we have
    \[ d(\mu(g_m^{-1})( D \cap K' ), b) < \frac{\delta}{2}. \]
    We set $N(\epsilon, \alpha) := m''(\frac{\delta}{2}, D)$, which depends on $\alpha$ and $\epsilon$, as $\delta$ and $D$ are determined by $\alpha$ and $\epsilon$ respectively. We observe that, since $D$ intersects each $W_i$ by assumption and $\alpha'$ was chosen sufficiently large, it intersects $\partial W_i \cap \interior(D^{n})$. Thus we find $p_i \in \partial W_i \cap D$. Since $\partial W_i \subset K'$, equation (\ref{eq:backward.contraction}) implies for every $m > N(\epsilon, \alpha)$ that
    \begin{equation} \label{eq:bounding.p_i}
    d(\mu(g_m^{-1})(p_i), b) < \frac{\delta}{2}.
    \end{equation}

    We claim that these choices of $M(\alpha)$, $N(\epsilon, \alpha)$, $V_1, \dots, V_{M(\alpha)}$ have the properties we require. Let $V \in \mathcal{U}'$ such that $\mu(g_m)(V) = W_i$ for some $i$ and some $m > N(\epsilon, \alpha)$. Suppose $V \neq V_j$ for all $1 \leq j \leq M(\alpha)$. Then
    \[ \frac{\delta}{2} > \diam(V) = \diam( \mu(g_m^{-1})(W_i) ) = \diam( \mu( g_m^{-1} ) ( \overline{W_i} ) ). \]
    Combined with equation (\ref{eq:bounding.p_i}), this implies that
    \[ d(\overline{V}, b) = d( \mu(g_m^{-1})( \overline{ W_i }), b) < \delta = 1 - \alpha. \]
    Therefore, $\overline{V}$ cannot intersect $C_{\alpha}$ which proves Step 3.\\

    \subsubsection*{Step 4: Produce a subsequence of $(g_m)_m$ that shrinks the diameter of $\partial V$ for all but finitely many $V \in \mathcal{U}'$ that intersect $C_{\alpha}$.}

    Choose some $V \in \mathcal{U}'$ such that $C_{\alpha} \cap V \neq \emptyset$. We partition the sequence $(g_m)_m$ into $L+1$ new sequences $g_{m}^{i}$ for $0 \leq i \leq L$ as follows: The sequence $g_m^{0}$ consists of all elements of $(g_m)_m$ such that
    \[ \diam(\mu(g_m^{0})(\partial V) ) \leq \frac{\epsilon}{2}. \]
    For $i \geq 1$, $(g_m^{i})_m$ consists of all elements of $(g_m)_m$ that satisfy $\mu(g_m^{i})(V) = W_i$.

    It is important to note that we index the elements $g_m^{i}$ such that $g_m^{i} = g_m$ for every $m$. In turn, this means that $g_m^{i}$ is defined only for certain $m \in \mathbb{N}$. It is important that we index our elements in this way to formulate the following statement.

    We claim that, if $V$ is not one of the sets $V_j$ from Step 3, then $g_m^{i}$ is not defined for any $1 \leq i \leq L$ and any $m > N(\epsilon, \alpha)$. Indeed, if $V$ is not any of the sets $V_j$ and $m > N(\epsilon, \alpha)$, then $V$ cannot satisfy both properties required in Step 3. But $C_{\alpha} \cap V \neq \emptyset$ by assumption on $V$ and thus $\mu(g_m)(V) \neq W_i$ for all $1 \leq i \leq L$.\\ 

    \subsubsection*{Step 5: Estimate $d(\mu(g_m^{0})(C_{\alpha} \cap V),a)$ for $V \in \mathcal{U}'$.}

    Let $V \in \mathcal{U}'$ such that $C_{\alpha} \cap V \neq \emptyset$. Since $C_{\alpha}$ intersects $V$ and $\alpha$ was chosen sufficiently large, there exists $q \in \partial V \cap C_{\alpha} \cap \interior(D^{n})$. 
    Applying inequality (\ref{eq:forward.contraction}), we obtain
    \[ \forall m > m'\left(\frac{\epsilon}{2},C_{\alpha} \right) : d(\mu(g_m)(q), a) < \frac{\epsilon}{2}. \]
    Restricting to the elements $g_m^{0}$, we also have
    \[ \diam\left(\mu\left(g_m^{0}\right)(\overline{V})\right) \leq \frac{\epsilon}{2}. \]
    Combining these two inequalities, we obtain
    \begin{equation} \label{eq:convergence.inequality}
    \forall m > m'\left(\frac{\epsilon}{2},C_{\alpha} \right) : d(\mu(g_m^{0})( C_{\alpha} \cap V ),a) < \epsilon.
    \end{equation}
    This is the equality we desire for all $g_m$ (and all $m$ greater than some $m_0$). We are left to show that this inequality also holds for the elements $g_m^{i}$ when $m$ is sufficiently large.
    
    If $V \neq V_j$ for all $1 \leq j \leq L$, then $g_m^{i}$ is not defined for any $m > N(\epsilon, \alpha)$ by Step 4 and we are done.

    Let $1 \leq j \leq L$ and suppose $V = V_j$. Let $m_i$ be the smallest $m$ such that $g_m^{i}$ is defined. We define $h_m^{i} := \left( g_{m_i}^{i} \right)^{-1} \cdot g_m^{i}$ and compute
    \[ \mu(h_m^{i}) = \mu(g_m^{i}) \circ \mu(g_{m_i}^{i})^{-1}. \]
    (Recall our convention that $\mu(gh) = \mu(h) \circ \mu(g)$.) We conclude that $\mu(h_m^{i})(W_i) = W_i$ and thus $h_m^{i} \in \Stab_{\mu'}(W_i)$. Since $\mu(g_m) \rightarrow a$ uniformly on compact sets in $K' \setminus \{ b \}$, we have that $\mu(h_m^{i}) \rightarrow a$ uniformly on compact sets in $K' \setminus \{ b_i \}$, where $b_i := \mu(g_{m_i}^{i})(b)$.

    Suppose, $g_m^{i}$ did not converge to $a$ uniformly on $C_{\alpha} \cap V_j$. Then there exists an infinite subsequence $(h_{m_k}^{i})_k$ and a sequence of points $z_{m_k} \in \mu(g_{m_i}^{i})(C_{\alpha}) \cap W_i$ such that
    \[ \forall m : d(\mu(h_{m_k}^{i})(z_{m_k}), a) \geq \epsilon. \]
    We first observe that $(z_{m_k})_k$ cannot converge to $b_i$. Indeed, if $z_{m_k} \rightarrow b_i$, then $\mu(g_{m_i}^i)^{-1}(z_{m_k}) \rightarrow b$ while $\mu(g_{m_i}^i)^{-1}(z_{m_k}) \in C_{\alpha}$ for every $k$. Since $C_{\alpha}$ is closed by construction, this implies that $b \in C_{\alpha}$. But $b \notin C_{\alpha}$ so this is a contradiction and we conclude that $(z_{m_k})_k$ cannot converge to $b$.
    
    By assumption, $\mu(\Stab_{\mu'}(W_i))$ is a convergence group on $\overline{W_i} \cap K$. Therefore, we may pass to a subsequence of $(h_{m_k}^{i})_k$ (which we denote by $(h_{m_k}^i)_k$ again) and find points $a', b' \in \overline{W_i} \cap K$ such that $\mu(h_{m_k}^{i}) \rightarrow a'$ uniformly on compact subsets of $(\overline{W_i} \cap K) \setminus \{ b' \}$.

    On the other hand, we know that $\mu(h_m^{i}) \rightarrow a$ uniformly on compact sets in $K' \setminus \{ b_i \}$, in particular on compact sets in $\partial W_i \setminus \{ b_i \}$. Since $\partial W_i \approx S^{n-1}$ contains infinitely many points, uniform convergence on compact sets in $\partial W_i \setminus \{ b', b_i \}$ yields that $a = a'$ and $b_i = b'$. Therefore, $\mu(h_{m_k}^{i}) \rightarrow a$ uniformly on compact subsets of $(\overline{W_i} \cap K) \setminus \{ b_i \}$. Since $z_{m_k} \in W_i \cap K$ and $z_{m_k}$ does not converge to $b_i$, we conclude that there exists $m_i(\epsilon, C_{\alpha}, V_j)$ such that
    \[ \forall m > m_i(\epsilon, C_{\alpha}, V_j) : d(\mu(h_{m_k}^{i}(z_{m_k}), a) < \epsilon. \]
    This is a contradiction and we conclude that there exists $m_i(\epsilon, C_{\alpha}, V_j)$ such that
    \[ \forall m > m_i(\epsilon, C_{\alpha}, V_j) : d(\mu(g_{m}^{i}(C_{\alpha} \cap V_j), a) \leq \epsilon. \]

    \subsubsection*{Step 6: Concluding the Lemma.}

    Set
    \[ m(\epsilon, \alpha) := \max\left\{ m'\left(\frac{\epsilon}{2}, C_{\alpha} \right), N(\epsilon, \alpha), m_1(\epsilon, C_{\alpha}, V_1), \dots, m_{M(\alpha)}(\epsilon, C_{\alpha}, V_{M(\alpha)}) \right\}. \]

    By the estimates obtained in Step 5, we have for every $V \in \mathcal{U}$ such that $C_{\alpha} \cap V \neq \emptyset$ that
    \[ \forall m \geq m(\epsilon, \alpha) : d( \mu( g_m( C_{\alpha} \cap V), a ) \leq \epsilon. \]
    Combined with inequality (\ref{eq:forward.contraction}) this implies that $(\mu, K, \mathcal{U})$ is a convergence $G$-complex, which completes the proof.    
\end{proof}

\bibliography{mybibmini}
\bibliographystyle{alpha}

\end{document}